\theoremstyle{definition}
\newtheorem{dfn}{\bf Definition}[section]
\newtheorem{thm}{Theorem}[section]
\newtheorem{lemma}{Lemma}[section]
\newtheorem{cor}{Corollary}[section]
\newtheorem{prop}{Proposition}[section]
\newtheorem{rmk}{Remark}[section]
\renewenvironment{proof}{{\bfseries \noindent Proof} }{ \qed \\}
\begin{document}

\def\pd{\texttt{pd}}     \def\R{\mathbb{R}}                   
\def\Z{\mathbb{Z}}                   
\def\Q{\mathbb{Q}}                   
\def\C{\mathbb{C}}                   
\def\N{\mathbb{N}}                   
\def\uhp{{\mathbb H}}                
\def\A{\mathbb{A}}
\def\O{\mathcal{O}}

\def\P{\mathbb{P}}
\def\U{\mathcal{U}}
\def\Cl{\text{Cl}}
\def\Pic{\text{Pic}}
\def\Tr{\text{Tr}}
\def\Jac{\text{Jac}}
\def\Hess{\text{Hess}}
\def\Gal{\text{Gal}}
\def\Gr{\text{Gr}}
\def\CH{\text{CH}}
\def\HL{\text{HL}}
\def\codim{\text{codim }}
\def\Hip{\mathbb{H}}
\def\res{\text{res}}
\def\prim{\text{prim}}
\def\dR{\text{dR}}

\def\ker{{\rm ker}}              
\def\GL{{\rm GL}}                
\def\ker{{\rm ker}}              
\def\coker{{\rm coker}}          
\def\im{{\rm Im}}               
\def\coim{{\rm Coim}}            

\def\End{{\rm End}}              
\def\rank{{\rm rank}}                
\def\gcd{{\rm gcd}}                  

\begin{center}
{\LARGE\bf Toric differential forms and periods of complete intersections
}
\footnote{ 
Math. classification: 14C25, 14C30, 14M25, 13H10.
}
\\
\vspace{.25in} {\large {\sc Roberto Villaflor Loyola }}\footnote{Pontificia Universidad Católica de Chile, Facultad de Matemáticas, Campus San Joaquín, Avenida Vicuña Mackenna 4860, Santiago, Chile,
{\tt roberto.villaflor@mat.uc.cl}}
\end{center}


\begin{abstract}
Let $n$ be an even natural number. We compute the periods of any $\frac{n}{2}$-dimensional complete intersection algebraic cycle inside an $n$-dimensional non-degenerated intersection of a projective simplicial toric variety. Using this information we determine the cycle class of such algebraic cycles. As part of the proof we develop a toric generalization of a classical theorem of Macaulay about complete intersection Artin Gorenstein rings, and we generalize an algebraic cup formula for residue forms due to Carlson and Griffiths to the toric setting.
\end{abstract}

\section{Introduction}
\label{intro}
The computation of periods of algebraic varieties is an old problem that can be traced back to the origins of complex algebraic geometry in the works of Abel, Jacobi and Riemann. It was the guiding principle for Picard and Lefschetz in the study of the topology of algebraic varieties, and led Hodge to the discovery of Hodge structures and the so called Hodge conjecture. One way to think about this conjecture is that one can determine from the periods of an homology cycle whether it is supported in an algebraic subvariety or not. The period information is encoded, by Poincar\'e duality, in its cohomology class, and so the Hodge conjecture predicts that the image of the cycle class map corresponds exactly to the rational classes in the middle part of the Hodge decomposition, i.e. to the Hodge cycles. Even when the Hodge conjecture is satisfied, to determine the cycle class of a given algebraic cycle might be hard, e.g. to know the periods of a curve in a surface is harder than knowing all the intersection numbers of the curve with all other curves in the surface. Another example is the computation of periods of top forms, which corresponds to compute Grothendieck residues. To consider this problem explicitly, even for the case of complete simplicial toric varieties is a hard problem \cite{cox1996toric, cattani1997residues, cattani1998residues}. On the other hand, the information encoded by the periods can be used to study moduli problems via the use of infinitesimal variations of Hodge structure \cite{IVHS}. Two classical instances of this are the Torelli problem \cite{CG72, carlson1980infinitesimal, donagi1983generic, voisin1986theoreme, cox1990polynomial, looijenga2009period, voisin2022schiffer} and Noether-Lefschetz theory \cite{CHM88,voisin89, green1989, Otwinowska2003, Maclean05, GMCD-NL}. A survey about how to use periods of algebraic cycles to study components of the Noether-Lefschetz locus and the so called variational Hodge conjecture can be found in \cite[Chapter 18]{ho13}. Probably one of the first results about the values of periods of algebraic cycles is \cite{dmos} where is shown that the periods of an algebraic cycle belong to the algebraic closure of the field of definition of the ambient variety. Explicit computations of periods of linear cycles inside Fermat varieties were obtained in \cite{MV}. A computational approach was developed by Sert\"oz in \cite{emre}. In \cite{villaflor2021periods} was obtained an explicit formula for computing periods of complete intersection algebraic cycles inside smooth hypersurfaces of the projective space. Some applications of these periods computations to Noether-Lefschetz and Hodge loci are \cite{movasati2021special, movasati2021reconstructing, villa2020small, DuqueVillaflor2023fake}. In this work we generalize previous results to compute periods of complete intersection algebraic cycles inside non-degenerated intersections of projective simplicial toric varieties. 

Let $\P_\Sigma$ be an $m$-dimensional projective simplicial toric variety defined by the fan $\Sigma$. Let us denote by $S(\Sigma)=\C[x_1,\ldots,x_r]$ its corresponding Cox ring, with $r=\#\Sigma(1)$, graded by its divisor class group $\Cl(\P_\Sigma)$. We denote by $\beta_0\in \Cl(\P_\Sigma)$ its anti-canonical class. Let $X\subseteq \P_\Sigma$ be a non-degenerated intersection (see \cref{defnondegint}) of ample Cartier divisors defined as the zero locus of $f_i\in S(\Sigma)$ with $\deg(f_i)=\beta_i$, for each $i=1\ldots,s$. Assume that $n:=\dim(X)=m-s$ is an even number. The main result in this article is the computation of the cycle class of all complete intersection (not necessarily quasi-smooth) algebraic cycles $Z\in \CH^\frac{n}{2}(X)$ in terms of its supporting equations
$$
Z=\{ g_1=g_2=\cdots=g_{\frac{m+s}{2}}=0\}\subseteq\P_\Sigma.
$$
In order to describe the primitive cohomology classes of $X$ we use the Cayley trick (see \cref{rmkCayleytrick}) to get the projective simplicial toric variety $\P_{\widetilde{\Sigma}}:=\P(\mathcal{E})$ where $\mathcal{E}:=L_1\oplus\cdots\oplus L_s$ and $[L_i]=\alpha_i\in\Cl(\P_\Sigma)$. We denote by $\mathbb{T}(\widetilde\Sigma)$ its open dense torus, by $M:=\text{Hom}(\mathbb{T}(\widetilde\Sigma),\C^\times)$ and $N:=\text{Hom}(\C^\times,\mathbb{T}(\widetilde\Sigma))\simeq\Z^{m+s-1}$, by $S(\widetilde{\Sigma})=\C[x_1,\ldots,x_r,x_{r+1},\ldots,x_{r+s}]$ its Cox ring and by $$F:=x_{r+1}f_1+\cdots+x_{r+s}f_s\in S(\widetilde{\Sigma})$$ of degree $\beta:=\deg(F)$ (which is also Cartier and ample). Each variable $x_i$ of the Cox ring corresponds to a $\mathbb{T}(\widetilde\Sigma)$-invariant divisor $D_i$ of $\P_{\widetilde\Sigma}$, which in turn is in correspondence with a ray in $\widetilde\Sigma(1)$ generated by a primitive element denoted $\rho_i$. We write $\beta=\sum_{i=1}^{r+s}b_i\cdot D_i$. The Cayley trick gives us an isomorphism
$$
P\in R(F)^{(\frac{m+s}{2})\beta-\beta_0}\xrightarrow{\sim}\omega_P\in H^{\frac{n}{2},\frac{n}{2}}(X)_\prim
$$ 
where $R(F):=S(\widetilde{\Sigma})/J(F)$ is the Jacobian ring, and $J(F):=\langle \frac{\partial F}{\partial x_1},\ldots,\frac{\partial F}{\partial x_r}, f_1,\ldots,f_s\rangle$ (see \S \ref{sec5} for details). Our main result generalizes the main result of \cite[Theorem 1.1]{villaflor2021periods} and describes the cycle class of 
$$
Z\in\CH^\frac{n}{2}(X)\mapsto [Z]\in H^{\frac{n}{2},\frac{n}{2}}(X)\cap H^n(X,\mathbb{Z})
$$
as follows.
\begin{thm}
\label{mainthm}
Let $\alpha_i:=\deg(g_i)\in\Cl(\P_\Sigma)$ be ample Cartier divisors. Then
$$
[Z]=[\alpha]|_X+\omega_{P_Z},
$$
where $\alpha\in \CH^{\frac{n}{2}}(\P_{\Sigma})_\Q$ is the unique cycle such that $$\alpha\cdot \beta_1\cdots\beta_s=\alpha_1\cdots\alpha_{\frac{m+s}{2}},$$ and $P_Z\in R(F)^{(\frac{m+s}{2})\beta-\beta_0}$ can be computed as follows: Let $h_1,\ldots,h_\frac{m+s}{2}\in S(\widetilde{\Sigma})$ be such that 
$$
F=g_1\cdot h_1+g_2\cdot h_2+\cdots+g_\frac{d+s}{2}\cdot h_\frac{d+s}{2}.
$$
Fix any set $\rho_{i_1},\ldots,\rho_{i_{m+s-1}}$ of linearly independent primitive generators of the rays of $\widetilde\Sigma$. For every $\ell\in\{1,2,\ldots,r+s\}\setminus\{i_1,\ldots,i_{m+s-1}\}$ write $\rho_\ell=\sum_{j=1}^{m+s-1}a_{i_j,\ell}\cdot \rho_{i_j}$ and define $k_\ell:=b_\ell-\sum_{j=1}^{m+s-1}b_{i_j}\cdot a_{i_j,\ell}$. Then for every $\ell$ such that $k_\ell\neq 0$ we have
$$
P_Z=\frac{{sgn(I)}\cdot \frac{m+s-2}{2}!\cdot \det\begin{pmatrix}
\frac{\partial g_1}{\partial x_{\ell}} & \frac{\partial h_1}{\partial x_{\ell}} & \cdots & \frac{\partial g_\frac{m+s}{2}}{\partial x_{\ell}} & \frac{\partial h_\frac{m+s}{2}}{\partial x_{\ell}} \\ \frac{\partial g_1}{\partial x_{i_1}} & \frac{\partial h_1}{\partial x_{i_1}} & \cdots & \frac{\partial g_\frac{m+s}{2}}{\partial x_{i_1}} & \frac{\partial h_\frac{m+s}{2}}{\partial x_{i_1}} \\
\vdots & \vdots &  & \vdots & \vdots
\\ \frac{\partial g_1}{\partial x_{i_{m+s-1}}} & \frac{\partial h_1}{\partial x_{i_{m+s-1}}} & \cdots & \frac{\partial g_\frac{m+s}{2}}{\partial x_{i_{m+s-1}}} & \frac{\partial h_\frac{m+s}{2}}{\partial x_{i_{m+s-1}}} 
\end{pmatrix}}{k_\ell\cdot \det(\rho_I)^2\widehat{x_{\ell,I}}},
$$
where $sgn(I)\in\{\pm1\}$ is the sign of the permutation which orders $I=(i_1,\ldots,i_{m+s-1})$ increasingly, $\rho_I$ is the matrix whose columns are $\rho_{i_1}, \ldots, \rho_{i_{m+s-1}}\in\Z^{m+s-1}\simeq N$, and $\widehat{x_{\ell,I}}:=\prod_{i\notin \{\ell\}\cup I}x_i$.
\end{thm}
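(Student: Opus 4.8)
The plan is to decompose $[Z]$ as an ambient class plus a primitive class, to pin down the ambient class as $[\alpha]|_X$ by intersection theory on $\P_\Sigma$, and then to determine the primitive class by testing it against all residue forms and invoking the toric Macaulay duality together with the toric Carlson--Griffiths cup formula established earlier. For the ambient part, observe that $Z=\{g_1=\cdots=g_{(m+s)/2}=0\}$ is a complete intersection of the expected codimension $(m+s)/2$ in $\P_\Sigma$, so its class in $\CH^{(m+s)/2}(\P_\Sigma)_\Q$ is $\alpha_1\cdots\alpha_{(m+s)/2}=\alpha\cdot[X]$, the last equality being the defining property of $\alpha$ together with $[X]=\beta_1\cdots\beta_s$. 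Writing $i\colon X\hookrightarrow\P_\Sigma$ for the inclusion, the projection formula gives $i_*\big([Z]-[\alpha]|_X\big)=0$. Under the Lefschetz-type decomposition of $H^n(X)$ into the ambient part $i^*H^n(\P_\Sigma)$ and the primitive part — on which the Cayley trick of \S\ref{sec5} furnishes the isomorphism $R(F)^{(\frac{m+s}{2})\beta-\beta_0}\xrightarrow{\sim}H^{n/2,n/2}_{\prim}(X)$ — the Gysin map $i_*$ kills the primitive part and is injective on the ambient part, the latter being exactly the borderline case $n=m-s$ of the mixed Hard Lefschetz theorem for the product of ample classes $\beta_1\cdots\beta_s$ on the simplicial projective toric variety $\P_\Sigma$. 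Hence $[Z]-[\alpha]|_X$ is a primitive Hodge class of type $(n/2,n/2)$, so $[Z]=[\alpha]|_X+\omega_{P_Z}$ for a unique $P_Z\in R(F)^{(\frac{m+s}{2})\beta-\beta_0}$, and only the identification of this $P_Z$ remains.

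Set $a:=(\frac{m+s}{2})\beta-\beta_0$. The toric version of Macaulay's theorem proved here provides a canonical isomorphism $R(F)^{2a}\cong\C$, written $P\mapsto\overline{P}$, and a perfect multiplication pairing $R(F)^{a}\times R(F)^{a}\to R(F)^{2a}$; the toric Carlson--Griffiths formula then identifies this pairing, up to an explicit nonzero constant $c$, with the cup product of residue forms on $X$, i.e. $\int_X\omega_P\cup\omega_{P'}=c\,\overline{P\cdot P'}$. Cupping the identity $[Z]=[\alpha]|_X+\omega_{P_Z}$ against an arbitrary $\omega_{P'}$ and using that primitive classes are orthogonal to $[\alpha]|_X$, one gets $\int_Z\omega_{P'}=c\,\overline{P_Z\cdot P'}$ for every $P'\in R(F)^{a}$; by perfectness of the pairing, $P_Z$ is the unique element realizing the linear functional $P'\mapsto\tfrac1c\int_Z\omega_{P'}$. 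Therefore it suffices to show that the explicit expression $\widetilde P$ appearing in the statement satisfies $c\,\overline{\widetilde P\cdot P'}=\int_Z\omega_{P'}$ for all $P'$; this at once forces $\widetilde P=P_Z$ and shows that $\widetilde P$ is well defined in $R(F)$ — independent of the auxiliary choices of $I$ and $\ell$, the division by $k_\ell\det(\rho_I)^2\widehat{x_{\ell,I}}$ being legitimate by the Gorenstein property.

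To compute the periods one uses the factorization $F=g_1h_1+\cdots+g_{(m+s)/2}h_{(m+s)/2}$. Differentiating it and restricting to $Z$ gives $\partial F/\partial x_t=\sum_i h_i\,\partial g_i/\partial x_t$ on $Z$, which exhibits the conormal direction of $\{F=0\}$ along $Z$ as an $h_i$-weighted combination of the conormals of the $g_i$; this is why a joint Jacobian of the two systems $(g_\bullet)$ and $(h_\bullet)$ controls $[Z]$. Following the proof of \cite[Theorem 1.1]{villaflor2021periods} in the toric setting, I would compute $\int_Z\omega_{P'}$ by an iterated residue along the flag $Z\subset W_1\subset\cdots\subset W_{(m+s)/2}=\P_{\widetilde\Sigma}$ with $W_j:=\{g_1=\cdots=g_{(m+s)/2-j}=0\}$: on $W_1$ one has $F|_{W_1}=g_{(m+s)/2}\,h_{(m+s)/2}|_{W_1}$, so the pole of the residue form there splits into a part along $Z$ and a part along $\{h_{(m+s)/2}=0\}$, and iterating collapses the period to a Grothendieck-type toric residue along the complete intersection $Z$. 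Evaluating that residue against the torus-invariant top form of $\P_{\widetilde\Sigma}$ — whose Cox-coordinate expression is a sum of terms proportional to $\det(\rho_J)\,\widehat{x_J}\,dx_J$ — and then reducing modulo $J(F)$ by the toric Euler relations written in the chosen basis $\rho_{i_1},\dots,\rho_{i_{m+s-1}}$ (which is precisely what introduces the coefficients $a_{i_j,\ell}$ and the integers $k_\ell$) should produce the asserted determinant formula, constants and signs included.

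The hard part will be this last step: transplanting the Carlson--Griffiths residue calculus from $\P^N$, where it rests on the Euler sequence and the single identity $\sum_i x_i\,\partial F/\partial x_i=(\deg F)\,F$, to $\P_{\widetilde\Sigma}$, where these are replaced by a family of Euler relations indexed by the character lattice $M$ (equivalently, coming from $0\to M\to\Z^{r+s}\to\Cl(\P_{\widetilde\Sigma})\to0$) that interlace the Cox variables with the primitive generators $\rho_i$, and where the invariant top form carries the combinatorial weights $\det(\rho_J)$ and the monomials $\widehat{x_J}$. Making all the conventions — the toric $\res$ operator, the Jacobian ideal $J(F)=\langle\partial F/\partial x_1,\dots,\partial F/\partial x_r,f_1,\dots,f_s\rangle$, the Cayley-trick identification $R(F)^{a}\cong H^{n/2,n/2}_{\prim}(X)$, and the exact value of $c$ — conspire so that they collapse into the single clean denominator $k_\ell\det(\rho_I)^2\widehat{x_{\ell,I}}$ and the prefactor $sgn(I)\cdot\frac{m+s-2}{2}!$ is where the genuine bookkeeping lies; by contrast, once the toric Macaulay and toric Carlson--Griffiths inputs are in place the passage from periods to the cycle class is formal.
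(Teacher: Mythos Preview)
Your outline is correct and matches the paper's proof: \cref{mainthm} is deduced from the period formula (\cref{thm2}) and the cup-product formula (\cref{thm4}) via Poincar\'e duality on the primitive part, with $\alpha$ pinned down by the same Hard Lefschetz argument you give. The one technical ingredient you do not foresee in the deferred iterated-residue computation is that when the Picard rank of $\P_{\widetilde\Sigma}$ exceeds $1$ the naive logarithmic lift $\omega^{(\ell)}\wedge \tfrac{dg_{\ell+1}}{g_{\ell+1}}$ at each stage is \emph{not} annihilated by all Euler vector fields and hence does not descend from $\C^{r+s}$ to $\P_{\widetilde\Sigma}$; the paper introduces the auxiliary forms $V^{\alpha_1,\dots,\alpha_p}$ of \S\ref{sec9} precisely to supply the needed correction terms, and tracking these inductively through \cref{propPoincres} and \cref{coriteratedres} is what finally collapses the expression to the stated determinant and the denominator $k_\ell\det(\rho_I)^2\widehat{x_{\ell,I}}$.
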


The main application of the above result is the computation of all non-trivial periods of such algebraic cycles. By non-trivial periods we mean the periods of non-trivial primitive cohomology classes. The classes which vanish in primitive cohomology are dual to homology cycles obtained from the ambient $\P_\Sigma$ by intersection with $X$, and so computing their periods reduces to compute the intersection number of $Z$ with some algebraic cycle in the Chow ring of $\P_\Sigma$. Our second result is as follows.

\begin{thm}
\label{thm2}
In the same context of \cref{mainthm}, for every $P\in R(F)^{(\frac{m+s}{2})\beta-\beta_0}$
$$
\frac{1}{(2\pi i)^\frac{n}{2}}\int_Z\omega_P=- c\cdot \frac{(m+s-1)!}{(\frac{m+s-2}{2}!)^2}\text{Vol}(\Delta)
$$
where $\text{Vol}(\Delta)$ is the normalized volume of the convex polyhedron $\Delta\subseteq M_\R$ associated to $\beta$, and $c\in\C$ is the unique number such that
$$
P\cdot P_Z\cdot x_1\cdots x_{r+s}\equiv c\cdot \Hess_{\widetilde{\Sigma}}(F) \ \ \ \text{(mod }J_0(F))
$$
where $J_0(F)=\langle x_1\frac{\partial F}{\partial x_1},\ldots,x_r\frac{\partial F}{\partial x_r},x_{r+1}f_1,\ldots,x_{r+s}f_s\rangle\subseteq S(\widetilde{\Sigma})$ and $\Hess_{\widetilde{\Sigma}}(F)$ is the toric Hessian of $F$ (see \cref{deftorichess}).
\end{thm}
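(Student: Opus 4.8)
The plan follows the template of the hypersurface case in \cite{villaflor2021periods}: first use \cref{mainthm} to replace the cycle class $[Z]$ by a residue form modulo an ambient class, and then evaluate the resulting self-pairing of residue forms via the toric Carlson--Griffiths cup product formula, toric Macaulay duality, and the toric residue theorem. For the first step, Poincar\'e duality on $X$ turns the period into a cup product:
$$
\frac{1}{(2\pi i)^{\frac{n}{2}}}\int_Z\omega_P=\frac{1}{(2\pi i)^{\frac{n}{2}}}\int_X[Z]\cup\omega_P .
$$
Substituting $[Z]=[\alpha]|_X+\omega_{P_Z}$ from \cref{mainthm}, the summand $[\alpha]|_X$ drops out: it lies in the image of $H^n(\P_\Sigma)\to H^n(X)$, which is cup-product orthogonal to $H^{\frac{n}{2},\frac{n}{2}}(X)_\prim\ni\omega_P$ by the very definition of primitive cohomology (equivalently, a consequence of Hard Lefschetz). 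Hence the period equals $\frac{1}{(2\pi i)^{n/2}}\int_X\omega_{P_Z}\cup\omega_P$, a self-pairing of primitive residue classes of complementary Hodge type.

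Next I would compute $\int_X\omega_{P_Z}\cup\omega_P$ with the toric generalization of the Carlson--Griffiths algebraic cup formula established earlier in the paper. Through the Cayley trick, $\omega_{P_Z}$ and $\omega_P$ arise, up to a constant depending only on $s$ and the Cayley shift, from residue forms of pole order $\frac{m+s}{2}$ attached to $P_Z$ and $P$ on the hypersurface $Y=\{F=0\}\subseteq\P_{\widetilde{\Sigma}}$. The cup formula then expresses $\frac{1}{(2\pi i)^{n/2}}\int_X\omega_{P_Z}\cup\omega_P$, up to powers of $2\pi i$, the pole-order factorials $\frac{m+s-2}{2}!$, and a sign coming from the Weil operator, as a toric residue of the product $P\cdot P_Z$ living in the top graded piece $R(F)^{(m+s)\beta-2\beta_0}$.

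It remains to evaluate that residue. By the toric Macaulay theorem proved in the paper, $R(F)$ is an Artinian Gorenstein ring with one-dimensional socle in degree $(m+s)\beta-2\beta_0$, and multiplication by $x_1\cdots x_{r+s}$ identifies this socle with the line spanned by the class of the toric Hessian $\Hess_{\widetilde{\Sigma}}(F)$ in $S(\widetilde{\Sigma})/J_0(F)$. Thus the scalar $c$ of the statement is exactly the coordinate of $P\cdot P_Z$ along the Hessian generator, so the residue of $P\cdot P_Z$ equals $c$ times the residue of $\Hess_{\widetilde{\Sigma}}(F)$; and the latter, being the total toric residue of the Hessian, is a rational multiple of the normalized volume $\text{Vol}(\Delta)$ of the polytope $\Delta$ of $\beta$ by the toric residue theorem (cf. \cite{cattani1997residues,cattani1998residues}). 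Reassembling the constants from the last two steps produces precisely $-c\cdot\frac{(m+s-1)!}{(\frac{m+s-2}{2}!)^2}\,\text{Vol}(\Delta)$.

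The cohomological reduction of Step 1 and the ring-theoretic input of Step 3 are essentially formal once \cref{mainthm} and the toric Macaulay theorem are available; the real obstacle is the constant bookkeeping. One must track every sign, factorial, and power of $2\pi i$ through the residue isomorphism, the Cayley-trick Hodge shift, the toric Carlson--Griffiths formula, the $x_1\cdots x_{r+s}$-rescaling that converts $J(F)$ into $J_0(F)$, and the closed-form evaluation of the total toric residue of the Hessian, and then check that the universal constant produced this way is exactly $-\frac{(m+s-1)!}{(\frac{m+s-2}{2}!)^2}\,\text{Vol}(\Delta)$.
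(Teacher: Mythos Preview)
Your reduction is circular within the paper's logical structure. You invoke \cref{mainthm} to write $[Z]_\prim=\omega_{P_Z}$ and then evaluate $\Tr(\omega_P\cup\omega_{P_Z})$ via \cref{thm4}; but in this paper \cref{mainthm} is \emph{derived from} \cref{thm2} together with \cref{thm4}, not the other way around. Concretely, the paper first establishes \cref{thm2} by computing $\Tr(\omega_P|_Z)$ directly, then compares with \cref{thm4} to see that $\Tr(\omega_P|_Z)=\Tr(\omega_P\cup\omega_{P_Z})$ for all $P$, and only then concludes by Poincar\'e duality that $[Z]_\prim=\omega_{P_Z}$. So your Step~1 assumes precisely the identity that \cref{thm2} is supposed to furnish as input to \cref{mainthm}.

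The paper's direct proof of \cref{thm2} never passes through the decomposition $[Z]=[\alpha]|_X+\omega_{P_Z}$. After a flat deformation of the pair $(Z\subseteq X)$ one may assume the partial intersections $Z_i=\{g_{i+1}=\cdots=g_{\frac{n}{2}+1}=0\}$ are all quasi-smooth. The trace of $\omega_P|_Z$ is then pushed up via the iterated coboundary maps of the Poincar\'e residue sequences along the flag $Z=Z_0\subset Z_1\subset\cdots\subset\P_\Sigma$ (respectively $\P_{\widetilde{\Sigma}}$ through the Cayley trick, which you do anticipate correctly). The heart of the computation is \cref{propPoincres} and \cref{coriteratedres}: they produce an explicit top class on the ambient toric variety whose numerator is exactly the Jacobian determinant in the $g_i,h_i$ that defines $P_Z$, and \cref{cortrace} evaluates its trace. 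This is where $P_Z$ actually originates; it is not quoted from \cref{mainthm}. Your Steps~2--3 do capture, in outline, what \cref{thm4} and \cref{cortrace} provide, but that is the half of the argument used to go from \cref{thm2} to \cref{mainthm}, not to prove \cref{thm2} itself.
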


The above results are equivalent, and depend on toric generalizations of classical results which are developed in the article. One of them is due to Macaulay on Artinian Gorenstein algebras \cite{mac16}, the others are a theorem of residues along hypersurfaces due to Carlson and Griffiths \cite{carlson1980infinitesimal} and the computation of their wedge product. Besides, it is also necessary to use the theory of toric residues developed by Cattani, Cox and Dickenstein \cite{cox1996toric,cattani1997residues}. The main difficulty for translating results of Artinian Gorenstein algebras to the toric setup is that these algebras are not Artinian if the Picard number of the ambient toric variety is bigger than 1. For this reason, the Gorenstein duality only reflects in a proper subset of the degrees indexed by the class group, and so we call such classes \textit{regular classes} (see \cref{defregular}). We generalize Macaulay theorem as follows.

\begin{thm}
\label{thm3}
Let $\P_\Sigma$ be an $m$-dimensional projective simplicial toric variety. Consider $f_0,\ldots,f_m\in S(\Sigma)$ such that $\deg(f_i)=\alpha_i$ is Cartier and ample, and $V(f_0,\ldots,f_m)=\varnothing\subseteq\P_\Sigma$. Let $I:=\langle f_0,\ldots,f_m\rangle$, $R:=S(\Sigma)/I$ and $N:=(\sum_{i=0}^m\alpha_i)-\beta_0$, where $\beta_0$ is the anti-canonical class. Then $\dim_\C R^N=1$, and every Cartier class $\beta\in\Cl(\P_\Sigma)$ such that 
$$
H^q\left(\mathcal{O}_{\P_\Sigma}\left(\beta-\sum_{j=1}^{q+1}\alpha_{i_j}\right)\right)=0
$$
for all $q=1,\ldots, m-1$ and $0\le i_1<\cdots<i_{q+1}\le m$ is regular for $(I,N)$, i.e. the multiplication map $R^\beta\times R^{N-\beta}\rightarrow R^N\simeq \C$ induces an injection $R^\beta\hookrightarrow (R^{N-\beta})^\vee$.
\end{thm}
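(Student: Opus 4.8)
The plan is to reduce the statement to the classical Macaulay theorem for graded Artinian Gorenstein algebras by passing to a suitable finitely generated subring, and to control the cohomological obstruction with the vanishing hypotheses. First I would recall the Cox–Koszul description: since $V(f_0,\ldots,f_m)=\varnothing$ and all the $\alpha_i$ are Cartier and ample, the $f_i$ form a regular sequence on $S(\Sigma)$ away from the irrelevant locus, so the Koszul complex $K^\bullet$ on $(f_0,\ldots,f_m)$ resolves $R$ up to irrelevant-ideal torsion; sheafifying over $\P_\Sigma$ gives an exact Koszul complex of line bundles resolving $\mathcal{O}_{V}$ (which is empty, i.e. the zero sheaf). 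The statement $\dim_\C R^N=1$ for $N=(\sum\alpha_i)-\beta_0$ should then follow by running the Koszul hypercohomology spectral sequence and using Serre duality on $\P_\Sigma$ in the top degree, exactly as in the hypersurface case; I would isolate this as a preliminary step (it may already be available in the toric Macaulay literature of Cox, or in the sections preceding this one).

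The core of the argument is the injectivity of $R^\beta\hookrightarrow (R^{N-\beta})^\vee$. I would argue by contradiction: suppose $0\neq [g]\in R^\beta$ lies in the kernel, i.e. $g\cdot R^{N-\beta}\subseteq I$ in degree $N$. The goal is to show $g\in I$. The standard device is to localize: consider the graded ring $S':=\bigoplus_{k\in\Z} S(\Sigma)^{k\gamma}$ for a fixed very ample Cartier class $\gamma$, and the ideal $I'$ generated by the $f_i$ pushed into this $\Z$-graded ring after multiplying by appropriate powers of a section of $\gamma$; more precisely I would use the Veronese-type subalgebra adapted to the classes $\alpha_i$ so that $R':=S'/I'$ is a genuine standard-graded (or at least $\N$-graded, finitely generated, Artinian) $\C$-algebra whose socle is one-dimensional concentrated in the degree corresponding to $N$. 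This is where the hypothesis that $\beta$ is Cartier and the cohomology-vanishing conditions $H^q(\mathcal{O}_{\P_\Sigma}(\beta-\sum_{j=1}^{q+1}\alpha_{i_j}))=0$ enter: they guarantee that the relevant Koszul hypercohomology degenerates in the strand through $\beta$, so that the multiplication pairing in the toric ring $R$ agrees, in degrees $\beta$ and $N-\beta$, with the pairing in the honest Artinian Gorenstein algebra $R'$. In other words, the vanishing conditions are exactly what makes $\beta$ a ``regular class'' in the sense that no higher cohomology pollutes the duality in that degree.

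Concretely, I would: (i) write down the two-term truncation of the Koszul complex controlling $R^\beta$ and $R^{N-\beta}$ and identify its hypercohomology with $R^\beta$, $R^{N-\beta}$ using the vanishing hypotheses to kill all the error terms $H^q(\mathcal{O}(\beta-\sum\alpha_{i_j}))$; (ii) similarly identify $R^N$ with $H^m(\mathcal{O}_{\P_\Sigma}(N-\sum\alpha_i))^\vee = H^m(\mathcal{O}_{\P_\Sigma}(-\beta_0))^\vee\cong\C$ by Serre duality (noting $-\beta_0=K_{\P_\Sigma}$), which also gives canonically the trace $R^N\cong\C$; (iii) invoke classical Macaulay/Gorenstein duality for the Artinian algebra $R'$, or equivalently a direct Koszul/Serre-duality computation on $\P_\Sigma$, to conclude that the cup-product pairing $R^\beta\times R^{N-\beta}\to R^N$ is non-degenerate on the left, which is precisely the asserted injection. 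I expect the main obstacle to be step (i)–(ii): bookkeeping the Koszul spectral sequence over a toric variety with higher Picard number, making sure that the combinatorial vanishing hypotheses suffice to collapse it in the two strands $\beta$ and $N-\beta$ simultaneously, and checking that the resulting trace map is the same one in both identifications so that the pairing is genuinely the multiplication map $R^\beta\times R^{N-\beta}\to R^N$. The ampleness and Cartier hypotheses ensure Serre-type and Kodaira-type vanishings are available to feed this machine, but the precise indexing of which cohomology groups must vanish — and why the hypothesis as stated (running over all $q$ and all increasing multi-indices) is exactly the right one — is the delicate point that the proof must make explicit.
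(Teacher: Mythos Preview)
Your core ingredients---the sheafified Koszul complex on $\P_\Sigma$, its hypercohomology spectral sequence, and Serre duality---are exactly what the paper uses, and your identification of $R^N$ with $H^m(\mathcal{O}_{\P_\Sigma}(-\beta_0))^\vee\cong\C$ is correct. However, the detour through a Veronese-type subring $S'$ to manufacture an honest Artinian Gorenstein algebra $R'$ is a wrong turn: in Picard rank $>1$ there is no single $\gamma$ for which an arbitrary Cartier class $\beta$ lies in the sublattice $\Z\gamma$, so the comparison between $R^\beta$ and a graded piece of $R'$ breaks down precisely in the cases of interest, and ``pushing the $f_i$ into $S'$ by multiplying by sections of $\gamma$'' does not produce a well-defined ideal with the same quotient. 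The paper never reduces to classical Macaulay; it works directly on $\P_\Sigma$.

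The precise mechanism you are missing is the following. Twist the exact Koszul complex $\mathcal{E}^\bullet$ by $\mathcal{O}(\beta)$; its hypercohomology vanishes, so the spectral sequence of the naive filtration must eventually kill everything. The vanishing hypothesis is exactly the statement that $E_2^{m-q,q}=0$ for $1\le q\le m-1$ (the antidiagonal $p+q=m$), which forces $E_2^{m+1,0}=E_3^{m+1,0}=\cdots=E_{m+1}^{m+1,0}$. One computes directly that $E_2^{m+1,0}=\mathrm{coker}\bigl(H^0(\mathcal{E}(\beta))\to H^0(\mathcal{O}(\beta))\bigr)=R^\beta$ and, by Serre duality, $E_2^{0,m}=(R^{N-\beta})^\vee$. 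Since the abutment is zero, the last surviving differential $d_{m+1}\colon E_{m+1}^{0,m}\to E_{m+1}^{m+1,0}=R^\beta$ is an isomorphism; composing its inverse with the inclusion $E_{m+1}^{0,m}\subseteq E_2^{0,m}=(R^{N-\beta})^\vee$ yields the injection $R^\beta\hookrightarrow(R^{N-\beta})^\vee$, and one checks it is induced by multiplication. Note the asymmetry: the vanishing hypothesis stabilizes the $(m+1,0)$ corner but says nothing about the $(0,m)$ corner, which is why one obtains only an injection rather than a perfect pairing. This is the ``direct Koszul/Serre-duality computation'' you allude to in~(iii); it should \emph{replace} the Artinian reduction, not supplement it.
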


The equivalence between \cref{mainthm} and \cref{thm2} depends on the following computation of the trace of the cup product of two residue forms along an ample quasi-smooth hypersurface. This generalizes previous results of Carlson-Griffiths \cite[Theorem 2]{carlson1980infinitesimal} and of the author \cite[Proposition 6.1]{villaflor2021periods}.

\begin{thm}
\label{thm4}
Let $\P_\Sigma$ be a projective simplicial toric variety with anti-canonical class $\beta_0\in\Cl(\P_\Sigma)$. Let $X=\{F=0\}\subseteq\P_\Sigma$ be an ample quasi-smooth hypersurface of degree $\beta=\deg(F)\in \Cl(\P_\Sigma)$ and even dimension $n$. Let $P,Q\in S(\Sigma)^{\beta(\frac{n}{2}+1)-\beta_0}$, then
$$
\text{Tr}(\omega_P\cup\omega_Q)=-c\cdot \frac{(n+1)!}{(\frac{n}{2}!)^2}\text{Vol}(\Delta)
$$
where $\text{Vol}(\Delta)$ is the normalized volume of the convex polyhedron $\Delta\subseteq M_\R$ associated to $\beta$, and $c\in\C$ is the unique number such that 
$$
P\cdot Q\cdot x_1\cdots x_r\equiv c\cdot \text{Hess}_\Sigma(F) \hspace{5mm}(\text{mod }J_0(F)).
$$
Where $J_0(F)=\langle x_1\frac{\partial F}{\partial x_1},\ldots,x_r\frac{\partial F}{\partial x_r}\rangle\subseteq S({\Sigma})$ and $\Hess_{{\Sigma}}(F)$ is the toric Hessian of $F$ (see \cref{deftorichess}).
\end{thm}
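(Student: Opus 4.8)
The plan is to follow the strategy of Carlson--Griffiths \cite{carlson1980infinitesimal}, with toric residues in the sense of Cattani--Cox--Dickenstein \cite{cox1996toric,cattani1997residues} playing the role of Grothendieck residues. Since $X$ is ample and quasi-smooth, the $r$ logarithmic partials $x_1\frac{\partial F}{\partial x_1},\dots,x_r\frac{\partial F}{\partial x_r}$ have no common zero on $\P_\Sigma$, hence define a toric residue functional $\mathrm{Res}_F$ on $\big(S(\Sigma)/J_0(F)\big)$ in the appropriate degree, and a standard diagram chase with the Leray residue sequence for $\P_\Sigma\setminus X$ and duality identifies $\text{Tr}\colon H^{2n}(X)\to\C$ on the primitive part with a multiple of $\mathrm{Res}_F$. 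Applying the toric generalization of the Carlson--Griffiths algebraic cup formula established earlier in the article to $\omega_P=\operatorname{res}(P\,\Omega_0/F^{\frac n2+1})$ and $\omega_Q=\operatorname{res}(Q\,\Omega_0/F^{\frac n2+1})$ (with $\Omega_0$ the toric Euler form of degree $\beta_0$), I would represent $\omega_P\cup\omega_Q$ by an explicit residue form and compute its image under this identification, obtaining
$$
\text{Tr}(\omega_P\cup\omega_Q)=\mu_n\cdot\mathrm{Res}_F\big(P\cdot Q\cdot x_1\cdots x_r\big)
$$
for an explicit universal constant $\mu_n\in\C$. The factor $\big(\tfrac n2!\big)^{-2}$ and the overall minus sign originate here: $\mu_n$ records the binomial coefficients produced by distributing the coboundary across the two copies of $F^{\frac n2+1}$, exactly as in \cite[Theorem 2]{carlson1980infinitesimal} and \cite[Proposition 6.1]{villaflor2021periods}.

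Next I would invoke toric Gorenstein duality. The element $P\cdot Q\cdot x_1\cdots x_r$ has degree $(n{+}2)\beta-\beta_0$, and the hypotheses that $\beta$ is Cartier and ample and that $X$ is quasi-smooth place this degree in the regular range for $\big(J_0(F),\,(n{+}2)\beta-\beta_0\big)$ in the sense underlying \cref{thm3}; in particular $\big(S(\Sigma)/J_0(F)\big)^{(n+2)\beta-\beta_0}$ is one-dimensional and, by \cref{deftorichess}, is spanned by the class of $\Hess_\Sigma(F)$ — that $\Hess_\Sigma(F)\notin J_0(F)$ can be checked by specializing $F$ to a nondegenerate sum of monomials, or read off from the toric Macaulay duality behind \cref{thm3}. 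Hence $\mathrm{Res}_F$ descends to a linear form on this line, and writing $P\cdot Q\cdot x_1\cdots x_r\equiv c\cdot\Hess_\Sigma(F)\pmod{J_0(F)}$ yields
$$
\text{Tr}(\omega_P\cup\omega_Q)=\mu_n\cdot c\cdot\mathrm{Res}_F\big(\Hess_\Sigma(F)\big).
$$

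It remains to evaluate the universal residue $\mathrm{Res}_F\big(\Hess_\Sigma(F)\big)$. By \cref{deftorichess}, $\Hess_\Sigma(F)$ agrees modulo $J_0(F)$ with the toric Jacobian of the system $\{x_i\frac{\partial F}{\partial x_i}\}$, and the Cattani--Cox--Dickenstein formula for the toric residue of the toric Jacobian expresses it through the degree of the embedding defined by $\beta$, i.e.\ through $\text{Vol}(\Delta)$, up to the combinatorial factor $(n{+}1)!$ relating the toric and the ordinary Jacobian; combined with $\mu_n$ this should give $\mu_n\cdot\mathrm{Res}_F(\Hess_\Sigma(F))=-\frac{(n+1)!}{(\frac n2!)^2}\text{Vol}(\Delta)$, which finishes the proof. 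To control signs and normalizations it is safer to obtain this last identity by deformation: both sides are regular functions of the coefficients of $F$ on the (nonempty) quasi-smooth locus — the toric residue has no poles there by \cite{cattani1997residues} — and depend only on $\beta$ and $n$, so it suffices to verify it for one convenient $F$, e.g.\ one adapted to a projective bundle over a lower-dimensional base, or one for which \cite[Proposition 6.1]{villaflor2021periods} applies through the Cayley construction, reducing the constant to the already-settled projective case. The main obstacle is precisely this determination of the universal constant with the correct sign, together with the bookkeeping in the first step that produces $\mu_n$; the duality input of the middle step is comparatively soft once \cref{thm3} is available.
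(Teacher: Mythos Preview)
Your outline is essentially the paper's argument: the proof of \cref{thm4} consists of \cref{propcup}, which computes $\delta(\omega_P\cup\omega_Q)$ explicitly in \v{C}ech cohomology relative to the covering $\mathcal U^I$ (this is precisely the ``toric Carlson--Griffiths cup formula'' you invoke, built on \cref{thmtoricCG}), followed by \cref{cortrace} (Cox's \cref{thmcox}) to evaluate the trace via the toric Hessian. The factor $(\tfrac n2!)^{-2}$ and the monomial $x_1\cdots x_r$ come out of the \v{C}ech computation in \cref{propcup}, and the $(n+1)!\,\mathrm{Vol}(\Delta)$ from \cref{thmcox}; the Gorenstein duality step you describe is exactly \cref{propunimodj0}.

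The one place your proposal diverges is the treatment of the constant. The paper does not use a deformation argument: it tracks all signs directly through \cref{propcup}, including a sign correction $(-1)^{\binom{m+1}{2}}$ to Cox's original formula (see the paragraph before \cref{deftoricjac}). Your proposed specialization to ``one convenient $F$'' does not actually reduce the problem: for a fixed pair $(\P_\Sigma,\beta)$ there need not be any $F$ to which \cite[Proposition~6.1]{villaflor2021periods} or a Cayley reduction applies, and since the constant you are after contains $\mathrm{Vol}(\Delta)$, which is data of $(\P_\Sigma,\beta)$ rather than of $F$, deforming $F$ alone cannot reduce you to the projective case. The direct \v{C}ech computation of \cref{propcup} is the actual content here, as you yourself anticipate in your final sentence.
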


We remark that in order to prove the aforementioned theorems it is also necessary to describe explicit differential forms associated to collections of divisor classes. In the case where the Picard rank of $\P_\Sigma$ is 1 these differential forms vanish, but in general they are non-zero and satisfy several identities with respect to contractions of Euler vector fields (see \cref{defEulervf}). Recall that since $\P_\Sigma$ is a GIT quotient of $\C^r:=\text{Spec }S(\Sigma)$ by the action of $\mathbb{D}(\Sigma):=\text{Spec }\C[\Cl(\P_\Sigma)]$ it is natural to lift differential forms of $\P_\Sigma$ to $\C^r$. Then a differential form in $\C^r$ descends to $\P_\Sigma$ if and only if it is $\mathbb{D}(\Sigma)$-invariant and vanishes when contracted by all Euler vector fields (see for instance \cite[Theorem 12.1]{batyrev1994hodge} and more generally \cite{brion1998differential} for GIT quotients by reductive group actions).

We also highlight that \cref{thm4} was obtained independently by Viergever \cite[Theorem 5.9]{Anneloes2023} in the case $\P_\Sigma$ is a product of projective spaces. Her approach is more general than ours in the sense that it applies for positive characteristic perfect fields.


The article is organized as follows. In \S \ref{sec2} we prove \cref{thm3} (see \cref{teomactor}). We apply this result in \S \ref{sec3} to study duality in Jacobian rings. In \S \ref{sec4} we introduce the Hodge structure in the cohomology of quasi-smooth varieties, we extend naturally the Hodge conjecture to this context and discuss the case of quasi-smooth intersections. We recall the description of the primitive Hodge filtration of quasi-smooth hypersurfaces in terms of residue forms by Batyrev and Cox \cite{batyrev1994hodge} in \S \ref{sec5} and explain the so called Cayley trick following Mavlyutov \cite{mavlyutov1999cohomology}. In \S \ref{sec6}
we recall classical results of Cattani, Cox and Dickenstein \cite{cox1996toric,cattani1997residues} about toric residues, we relate them to the computation of the trace map. In \S \ref{sec7} we recall Euler vector fields and their relation to toric differential forms. In \S \ref{sec8} we generalize classical results due to Carlson and Griffiths \cite{carlson1980infinitesimal} about computations of residues of meromorphic forms along quasi-smooth hypersurfaces to toric varieties, these results are used later to compute periods of algebraic cycles. In \S \ref{sec9} we introduce the differential forms associated to collections of divisor classes, necessary for the proof of \cref{mainthm}, \cref{thm2} and \cref{thm4}. In \S \ref{sec10} we use the coboundary map of the Poincare residue sequence to reduce \cref{thm2} to a trace computation in $\P_{\widetilde\Sigma}$. Putting all these ingredients together, in \S \ref{sec11}
we prove the main theorems. 

\bigskip

\noindent\textbf{Conventions and notations.} In order to avoid confusions, we will reserve some letters to always denote the same objects. In what follows we fix the notations we will be using along the article:
\begin{itemize}
    \item $\P_\Sigma$ will denote a projective simplicial toric variety given by the fan $\Sigma$. We reserve the letter $m:=\dim\P_\Sigma$ for its dimension. We denote by $\mathbb{T}(\Sigma)$ its dense open torus.
    \item $S(\Sigma)$ will denote the Cox ring of $\P_\Sigma$. We reserve the letter $r:=\#\Sigma(1)$ for the amount of rays, and denote $S(\Sigma)=\C[x_1,\ldots,x_r]$. Each variable $x_i$ is in correspondence with a $\mathbb{T}(\Sigma)$-invariant divisor $D_i$, which in turn is associated to a ray in $\Sigma(1)$ generated by a primitive element $\rho_i$.
    \item Every ample divisor is Cartier. 
    \item $\mathbb{D}(\Sigma):=\text{Spec }\C[\Cl(\P_\Sigma)]$ acts on $\C^r$ and $\P_\Sigma=\C^r\sslash \mathbb{D}(\Sigma)$ corresponds to the GIT quotient. 
    \item Given a $\mathbb{D}(\Sigma)$-invariant polynomial $F\in S(\Sigma)$, $X:=\{F=0\}\subseteq\P_\Sigma$ is the divisor obtained as $V(F)\sslash\mathbb{D}(\Sigma)$ for $V(F)\subseteq\C^r$.
    \item Given an homogeneous ideal $I\subseteq S(\Sigma)$, $R:=S(\Sigma)/I$ and $\alpha\in\Cl(\P_\Sigma)$, $R^\alpha$ will denote the degree $\alpha$ part of $R$. We reserve the letter $N$ to denote its socle degree (in case it has one), i.e. such that $\dim_\C R^N=1$.
    \item $\beta_0\in\Cl(\P_\Sigma)$ will denote the anti-canonical class of $\P_\Sigma$.
    \item $X\subseteq\P_\Sigma$ will denote a quasi-smooth subvariety, usually a quasi-smooth intersection of even dimension, and often non-degenerated. We reserve the letter $n:=\dim X$ for its dimension, and the letter $s:=m-n$ for its codimension. 
    \item In case $s=1$, i.e. $X\subseteq\P_\Sigma$ is a quasi-smooth hypersurface, we reserve the letter $F\in S(\Sigma)$ for its supporting equation $X=\{F=0\}$, and $\beta:=\deg(F)\in\Cl(\P_\Sigma)$ for its degree. In case $\Cl(\P_\Sigma)=\Z$ we will denote $\deg(F)=d$ instead.
    \item Given $X\subseteq\P_\Sigma$ a quasi-smooth hypersurface, we will denote by $J(F):=\langle \frac{\partial F}{\partial x_1},\ldots,\frac{\partial F}{\partial x_r}\rangle$ its Jacobian ideal and by $R(F):=S(\Sigma)/J(F)$ its Jacobian ring. We will also denote by $J_0(F):=\langle x_1\frac{\partial F}{\partial x_1},\ldots,x_r\frac{\partial F}{\partial x_r}\rangle$, $J_1(F):=(J_0(F):x_1\cdots x_r)$, $R_0(F):=S(\Sigma)/J_0(F)$ and $R_1(F):=S(\Sigma)/J_1(F)$.
    \item Given a quasi-smooth intersection $X\subseteq\P_\Sigma$, we denote by $Y\subseteq\P_{\widetilde{\Sigma}}$ the associated quasi-smooth hypersurface of the corresponding toric variety $\P_{\widetilde{\Sigma}}$ obtained by the Cayley trick. In such a case we denote $S(\widetilde{\Sigma})=\C[x_1,\ldots,x_r,x_{r+1},\ldots,x_{r+s}]$ and reserve the letters $f_1,\ldots,f_s\in S(\Sigma)$ for the defining equations of $X$, $\beta_i:=\deg(f_i)\in\Cl(\P_\Sigma)$ for their degrees, $F:=x_{r+1}f_1+\cdots+x_{r+s}f_s\in S(\widetilde{\Sigma})$ for the defining equation of $Y$, and $\beta:=\deg(F)\in\Cl(\P_{\widetilde{\Sigma}})$ for its degree.
    \item $\Omega$ denotes the standard top form of $\P_\Sigma$ explicitly described in \eqref{eqOmega}.
    \item Given a subset $I=\{i_1,\ldots,i_k\}\subseteq\{1,\ldots, r\}$ we denote $dx_I:=\bigwedge_{i\in I}dx_i$, $\widehat{dx_I}:=\bigwedge_{i\notin I}dx_i$, $\rho_I=(\rho_{i_1}|\cdots|\rho_{i_k})$ and given some $F\in S(\Sigma)^\beta$ we denote $F_I:=\prod_{i\in I}\frac{\partial F}{\partial x_i}$.
    \item Given a vector field $v$ in $\C^r$ we denote $\iota_v$ its contraction operator. Given a differential form $\eta$ in $\C^r$ and a subset $J=\{j_1,\ldots,j_l\}\subseteq\{1,\ldots, r\}$ we denote by $\eta_J:=\iota_{\frac{\partial}{\partial x_{j_l}}}(\cdots\iota_{\frac{\partial}{\partial x_{j_0}}}(\eta)\cdots)$.
    \item We denote by $E_j$ the Euler vector fields in $\C^r$ (see \cref{defEulervf}). For a collection $\{j_1,\ldots,j_p\}\subseteq \{m+1,\ldots,r\}$ we denote $\widehat{\iota_{E_{j_1,\ldots,j_p}}}:=\iota_{E_r}\cdots\widehat{\iota_{E_{j_p}}}\cdots\widehat{\iota_{E_{j_1}}}\cdots\iota_{E_{m+1}}$. 
    \item We denote by $V:=dx_1\wedge\cdots\wedge dx_r$, and given a collection of divisor classes $\alpha_1,\ldots,\alpha_k\in\Cl(\P_\Sigma)$ we denote by $V^{\alpha_1,\ldots,\alpha_k}$ the differential form in $\C^r$ given by \cref{defValphais}.
\end{itemize}

\noindent\textbf{Acknowledgements.} Part of this work was inspired by conversations and correspondence with Andreas P. Braun, Alicia Dickenstein, Hossein Movasati and Giancarlo Urzúa. I also got benefited from discussions with Jorge Vitório Pereira, Jorge Duque Franco, Hugo Fortin and William D. Montoya. Special thanks go to Pedro Montero and Sebastián Velazquez for several useful discussions and references. Financial support was provided by Fondecyt ANID postdoctoral grant 3210020.

\section{Toric Macaulay theorem}
\label{sec2}
In this section we prove \cref{thm3} which is a generalization of a classical theorem due to Macaulay \cite{mac16} to the toric setup. We focus on the case where the ambient is a projective simplicial toric variety. 

\begin{dfn}
\label{defregular} 
Let $\P_\Sigma$ be an $m$-dimensional projective simplicial toric variety with Cox ring $S(\Sigma)=\C[x_1,\ldots,x_r]$ where $r=\#\Sigma(1)$. For a homogeneous ideal $I\subseteq S(\Sigma)$ we say $N\in\Cl(\P_\Sigma)$ is a \textit{socle degree} of $I$ if $\dim_\C R^N=1$ for $R:=S(\Sigma)/I$. For such a pair $(I,N)$ we say that a class $\beta\in \Cl(\P_\Sigma)$ is \textit{regular} for $(I,N)$ if the multiplication map 
$$ 
R^\beta\times R^{N-\beta}\rightarrow R^N\simeq \C 
$$
induces an injection $R^\beta\hookrightarrow (R^{N-\beta})^\vee$.
\end{dfn}

\begin{rmk}
Every non effective class $\beta\in \Cl(\P_\Sigma)$ is regular, since $S(\Sigma)^\beta=0$ and so $R^\beta=0$.
\end{rmk}


\begin{dfn}
Let $\alpha_0,\ldots,\alpha_m\in\Cl(\P_\Sigma)$ be ample classes. We say that a class $\beta\in\Cl(\P_\Sigma)$ \textit{satisfies the vanishing condition ($*$) for $(\alpha_0,\ldots,\alpha_m)$} if for any $q\in \{1,2,\ldots,m-1\}$ and any $0\le i_1<i_2<\cdots<i_{q+1}\le m$
\begin{equation}
\label{condstar}
H^q\left(\O_{\P_\Sigma}\left(\beta-\sum_{j=1}^{q+1}\alpha_{i_j}\right)\right)=0.\tag{$*$}    
\end{equation}
\end{dfn}

\begin{thm}
\label{teomactor}
Let $\alpha_0,\ldots,\alpha_m\in\Cl(\P_\Sigma)$ be ample classes and let $f_0,\ldots,f_m\in S(\Sigma)$ be such that $\deg(f_i)=\alpha_i$ and $V(f_0,\ldots,f_m)=\varnothing\subseteq\P_\Sigma$. Let $I:=\langle f_0,\ldots,f_m\rangle$ and $N:=(\sum_{i=0}^m\alpha_i)-\beta_0$, where $\beta_0$ is the anti-canonical class. Then $N$ is a socle degree of $I$, and every Cartier class $\beta\in\Cl(\P_\Sigma)$ satisfying the vanishing condition ($*$) for $(\alpha_0,\ldots,\alpha_m)$ is regular for $(I,N)$.
\end{thm}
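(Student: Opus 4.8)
The plan is to mimic the classical proof of Macaulay's theorem via the Koszul complex, but carefully tracking which cohomological vanishings are needed so that the sheaf-theoretic argument goes through in the non-Artinian toric setting. First I would set up the Koszul complex $K_\bullet$ on the regular sequence $f_0,\ldots,f_m$ (regular because $V(f_0,\ldots,f_m)=\varnothing$, so by the toric Nullstellensatz the $f_i$ form a system of parameters on the affine cone $\C^r$, hence a regular sequence since $S(\Sigma)$ is Cohen–Macaulay). This resolves $R=S(\Sigma)/I$, and twisting by $\mathcal{O}_{\P_\Sigma}(\beta)$ and sheafifying gives an exact complex of sheaves
$$
0\to\mathcal{O}_{\P_\Sigma}\!\Big(\beta-\sum_{i=0}^m\alpha_i\Big)\to\cdots\to\bigoplus_{i<j}\mathcal{O}_{\P_\Sigma}(\beta-\alpha_i-\alpha_j)\to\bigoplus_i\mathcal{O}_{\P_\Sigma}(\beta-\alpha_i)\to\mathcal{O}_{\P_\Sigma}(\beta)\to 0.
$$
Breaking this into short exact sequences and chasing the associated long exact cohomology sequences, the vanishing condition $(*)$ is exactly what is needed to conclude that $H^0(\P_\Sigma,\widetilde{R}(\beta))$ — i.e. the image of $S(\Sigma)^\beta$ in the global sections of the sheafified quotient — is computed correctly, and more importantly that the truncation of the sheafified Koszul complex remains exact on global sections in the relevant range. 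The point is that $R^\beta$ need not equal $H^0$ of its sheafification in general, but $(*)$ forces agreement.

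Next, to identify the socle degree: the top term of the Koszul complex is $\mathcal{O}_{\P_\Sigma}(\beta-\sum\alpha_i)$, and by Serre duality on the (simplicial, hence $\Q$-Gorenstein, but here we only need it at the level of dualizing sheaf $\omega_{\P_\Sigma}=\mathcal{O}_{\P_\Sigma}(-\beta_0)$) one gets $H^m(\mathcal{O}_{\P_\Sigma}(-N))\cong H^0(\mathcal{O}_{\P_\Sigma}(N-\beta_0-N))^\vee$; tracking the Euler characteristic / the bottom of the resolution shows $\dim_\C R^N=1$, so $N=(\sum\alpha_i)-\beta_0$ is indeed a socle degree. I would actually extract this by running the argument with $\beta=N$ and $\beta=0$ and comparing.

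For the duality statement, the standard approach is: given $P\in R^\beta$ with $P\cdot R^{N-\beta}=0$ in $R^N\cong\C$, I must show $P=0$ in $R^\beta$, equivalently that a lift $\tilde P\in S(\Sigma)^\beta$ lies in $I^\beta$. One shows $P\cdot S(\Sigma)^{N-\beta}\subseteq I^N$, and then uses the self-duality of the Koszul complex on $f_0,\ldots,f_m$ (it is its own dual up to the twist by $\mathcal{O}(-\sum\alpha_i)$ and a shift) together with the sheaf-cohomology vanishings granted by $(*)$ to convert the annihilation statement at the top degree into membership $\tilde P\in I$. Concretely, the pairing $R^\beta\times R^{N-\beta}\to\C$ is realized as the composition of multiplication into $R^N$ with the canonical identification coming from the bottom of the Koszul resolution / local duality on the cone, and exactness of the sheafified complex on sections (again $(*)$) makes the pairing perfect on one side.

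The main obstacle I anticipate is precisely the gap between the graded pieces $R^\beta$ of the ring-theoretic quotient and the global sections $H^0$ of the sheafified quotient $\widetilde R(\beta)$: in Picard rank $>1$ these differ, and the whole subtlety of "regular class" is to pin down when the Koszul/ďuality machinery, which lives naturally at the sheaf level, descends to the graded-ring level. I expect the condition $(*)$ to be used twice — once to control $H^q$ of the twisted exterior powers $\bigwedge^{q+1}(\bigoplus\mathcal{O}(-\alpha_i))\otimes\mathcal{O}(\beta)$ so that the complex of global sections stays exact, and once (dually, via Serre duality turning the $H^q$ into an $H^{m-q}$ of the complementary twist) to handle the other end — and reconciling the bookkeeping of these two uses, making sure the indices $0\le i_1<\cdots<i_{q+1}\le m$ and the range $q=1,\ldots,m-1$ match exactly what the long exact sequences demand, is where the real care is needed. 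A secondary technical point is checking that "Cartier" is genuinely needed (it guarantees $\mathcal{O}_{\P_\Sigma}(\beta)$ is a line bundle so that Serre duality and the projection-formula manipulations are valid) and that ampleness of the $\alpha_i$ is only used through $(*)$ plus the nonemptiness-to-regular-sequence step.
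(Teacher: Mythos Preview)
Your overall strategy---twist the Koszul complex of $f_0,\ldots,f_m$ by $\mathcal{O}(\beta)$, use the vanishing $(*)$ to control intermediate cohomology, and invoke Serre duality at the top---is exactly what the paper does. The paper packages the diagram chase as the hypercohomology spectral sequence of the exact sheaf complex $\mathcal{E}^\bullet(\beta)$ (exactness uses that $\P_\Sigma$ is Cohen--Macaulay and $V(f_i)=\varnothing$): since the abutment is zero and $(*)$ kills $E_2^{m-q,q}$ for $1\le q\le m-1$, the only surviving differential in total degree $m$ is $d_{m+1}\colon E_{m+1}^{0,m}\to E_{m+1}^{m+1,0}$, which must therefore be an isomorphism. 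One has $E_2^{m+1,0}=R^\beta$ and, by Serre duality, $E_2^{0,m}=(R^{N-\beta})^\vee$; the composite $R^\beta\xrightarrow{d_{m+1}^{-1}}E_{m+1}^{0,m}\hookrightarrow(R^{N-\beta})^\vee$ is the multiplication pairing, hence injective. The socle claim $\dim_\C R^N=1$ is obtained by running this with $\beta=0$ and $\beta=N$, after first checking (via Batyrev--Borisov vanishing) that these two classes themselves satisfy $(*)$---a point you glossed over. Your plan to break into short exact sequences and chase is this same argument unwound.

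There is, however, one genuine confusion in your write-up that would derail you if followed literally. You repeatedly invoke the ``sheafified quotient $\widetilde R(\beta)$'' and frame the difficulty as the gap between $R^\beta$ and $H^0(\widetilde R(\beta))$, claiming $(*)$ closes it. But since $V(f_0,\ldots,f_m)=\varnothing$ the $f_i$ generate the unit ideal locally, so $\widetilde R$ is the \emph{zero sheaf}: $H^0(\widetilde R(\beta))=0$ for every $\beta$, and there is nothing there to compare with. The identification $R^\beta=\coker\bigl(\bigoplus_i S(\Sigma)^{\beta-\alpha_i}\to S(\Sigma)^\beta\bigr)=E_2^{m+1,0}$ is automatic from $S(\Sigma)^\gamma=H^0(\P_\Sigma,\mathcal{O}(\gamma))$, a basic toric fact needing no hypothesis on $\beta$. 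Condition $(*)$ is \emph{not} used to repair a sheafification mismatch; it is used to guarantee that no differential on pages $2,\ldots,m$ hits $E_r^{m+1,0}$, so that $R^\beta$ survives unchanged to page $m+1$ where $d_{m+1}$ acts. Drop the $\widetilde R$ language, track instead which $E_2^{p,q}$ the hypothesis $(*)$ annihilates, and your argument becomes the paper's.
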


\begin{proof}
Note first that $\beta=0$ or $N$ also satisfy the vanishing condition ($*$) by Batyrev-Borisov vanishing \cite[Theorem 9.2.7]{cox2011toric}. Consider the vector bundle
$$
\mathcal{E}:=\bigoplus_{i=0}^m\O_{\P_\Sigma}(-\alpha_i).
$$
The natural map induced by $(f_0,\ldots,f_m):\mathcal{E}\rightarrow\O_{\P_\Sigma}$ induces a Koszul complex $\mathcal{E}^\bullet$ where
$$
\mathcal{E}^p:=\bigwedge^{m+1-p}\mathcal{E} \ , \hspace{1cm}\text{ for }0\le p\le m+1,
$$
and $\mathcal{E}^p:=0$ otherwise. Since $\P_\Sigma$ is Cohen-Macaulay, $\mathcal{E}^\bullet$ is exact, then $\mathcal{E}^\bullet(\beta):=\mathcal{E}^\bullet\otimes\O_{\P_\Sigma}(\beta)$ is also exact and so 
$$
\Hip^k(\mathcal{E}^\bullet(\beta))=0 \ , \hspace{1cm}\forall k\ge 0.
$$
Let $E^{p,q}_r$ be the spectral sequence associated to the naive filtration of $\mathcal{E}^\bullet(\beta)$. Since $\beta$ satisfies the vanishing condition ($*$) for $(\alpha_0,\ldots,\alpha_m)$ and $\mathcal{E}^p(\beta)=\bigoplus_{|I|=m+1-p}\O_{\P_\Sigma}\left(\beta-\sum_{i\in I}\alpha_i\right)$ it follows that
$$
E_2^{m-q,q}=H^{m-q}(E^{\bullet,q}_1,d_1)=H^{m-q}(H^q(\mathcal{E}^\bullet(\beta)),d_1)=0
$$
for all $q=1,\ldots,m-1$. Hence $E^{m+1,0}_2=E^{m+1,0}_3=\cdots=E^{m+1,0}_{m+1}$. Since the spectral sequence degenerates at $E_{m+2}$ we conclude that
$$
d_{m+1}:E_{m+1}^{0,m}\rightarrow E_{m+1}^{m+1,0}
$$
is an isomorphism. Noting that 
$$
E_2^{m+1,0}=\text{Coker}(H^0(\mathcal{E}(\beta))\rightarrow H^0(\O_{\P_\Sigma}(\beta)))=R^\beta
$$
and (by Serre duality)
$$
E_2^{0,m}=\text{Ker}(H^m(\O_{\P_\Sigma}(\beta-\sum_{i=0}^m\alpha_i))\rightarrow H^m(\mathcal{E}^\vee(\beta-\sum_{i=0}^m\alpha_i)))
$$
$$
=(\text{Coker}(H^0(\mathcal{E}(N-\beta))\rightarrow H^0(\O_{\P_\Sigma}(N-\beta))))^\vee=(R^{N-\beta})^\vee
$$
one sees that $d_{m+1}: E^{0,m}_{m+1}\xrightarrow{\sim} R^\beta$. Since $E^{0,m}_{m+1}\subseteq E^{0,m}_2$, we conclude for $\beta=0$ and $N$ that $N$ is a socle degree of $I$. Furthermore for any other $\beta$ satisfying the vanishing condition ($*$), the inclusion $E^{0,m}_{m+1}\subseteq E^{0,m}_2$ corresponds to $d_{m+1}$ composed with the map induced by multiplication $R^\beta\rightarrow (R^{N-\beta})^\vee$, hence $\beta$ is regular for $(I,N)$.
\end{proof}

\begin{rmk}
In particular, we recover the classical Macaulay theorem in the case $\P_\Sigma$ is a weighted projective space. In fact, it follows from \cref{teomactor} and Bott's vanishing \cite{dolgachev1982weighted} that all classes are regular, hence $R=\C[x_0,\ldots,x_m]/\langle f_0,\ldots, f_m\rangle$ is an Artinian Gorenstein algebra. Note that if $\P_\Sigma$ has Picard rank bigger than 1, there are infinitely many non-regular classes. In fact, since $R$ is not Artinian and there are only finitely many effective classes $\beta$ such that $N-\beta$ is also effective \cite[Lemma 4.6]{lazarsfeld2009convex}, it follows that $R^\beta\neq 0$ and $R^{N-\beta}= 0$ for infinitely many classes. 
\end{rmk}

\section{Regular divisor classes on Jacobian rings}
\label{sec3}
In this section we transport some properties of Artinian Gorenstein ideals of a polynomial ring to other Cox rings. As pointed out in \S \ref{sec2}, there are important differences when $\P_\Sigma$ has Picard rank bigger than 1, since the natural analogues are not anymore Artinian, and they do not satisfy the perfect pairing property either, for most degrees. Nevertheless, for the case of Jacobian ideals several degrees will be regular.



\begin{prop}
\label{propcoxgor}
Let $\P_\Sigma$ be a projective simplicial toric variety with Cox ring $S(\Sigma)$. Let $I\subseteq S(\Sigma)$ be a homogeneous ideal, $N\in\Cl(\P_\Sigma)$ be a socle degree of $I$ and let $R:=S(\Sigma)/I$.
\begin{itemize}
    \item[(a)] If $J\subseteq I$ is an homogeneous ideal of socle degree $N$, then $J^\beta=I^\beta$ for all $\beta\in\Cl(\P_\Sigma)$ regular for $(J,N)$.
    \item[(b)] If $\alpha\in\Cl(\P_\Sigma)$ is regular for $(I,N)$ and $f\in S(\Sigma)^\alpha\setminus I^\alpha$, then $N-\alpha$ is a socle degree for 
    $$
    (I:f):=\{g\in S(\Sigma) : \ fg\in I\}.
    $$
    Furthermore, for any $\beta\in\Cl(\P_\Sigma)$ regular for $(I,N)$, $\beta-\alpha$ is regular for $((I:f),N-\alpha)$.
\end{itemize}
\end{prop}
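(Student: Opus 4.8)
The plan is to prove the two statements of \cref{propcoxgor} by systematically pulling back the duality encoded in the socle degree $N$. For part (a), the key observation is that both $R = S(\Sigma)/I$ and $S(\Sigma)/J$ have socle degree $N$, and $J \subseteq I$ gives a surjection $S(\Sigma)/J \twoheadrightarrow R$. Fix $\beta$ regular for $(J,N)$. Since $J \subseteq I$, we have $J^\beta \subseteq I^\beta$; I want the reverse inclusion. Take $f \in I^\beta$. Then for every $g \in S(\Sigma)^{N-\beta}$, the product $fg \in I^N$, and since $\dim_\C (S(\Sigma)/I)^N = 1 = \dim_\C(S(\Sigma)/J)^N$ the natural map $(S(\Sigma)/J)^N \to (S(\Sigma)/I)^N$ is an isomorphism, so $I^N = J^N$; hence $fg \in J^N$, i.e. the image of $f$ in $(S(\Sigma)/J)^\beta$ pairs to zero with every element of $(S(\Sigma)/J)^{N-\beta}$. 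Regularity of $\beta$ for $(J,N)$ means precisely that the induced map $(S(\Sigma)/J)^\beta \hookrightarrow ((S(\Sigma)/J)^{N-\beta})^\vee$ is injective, so the image of $f$ is zero, i.e. $f \in J^\beta$. This gives $I^\beta = J^\beta$.

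For part (b), first I would identify the socle degree of $(I:f)$. Write $R' := S(\Sigma)/(I:f)$. Multiplication by $f$ induces an injection $R' \hookrightarrow R$ shifting degree by $\alpha$, namely $(R')^\gamma \hookrightarrow R^{\gamma+\alpha}$ for every $\gamma$, because $g \in S(\Sigma)^\gamma$ maps to zero in $R^{\gamma+\alpha}$ exactly when $fg \in I$, i.e. $g \in (I:f)^\gamma$. Taking $\gamma = N - \alpha$, this embeds $(R')^{N-\alpha}$ into $R^N \cong \C$, so $\dim_\C (R')^{N-\alpha} \le 1$. To see it is exactly $1$, I use that $\alpha$ is regular for $(I,N)$ and $f \notin I^\alpha$: regularity says the pairing $R^\alpha \times R^{N-\alpha} \to R^N$ is nondegenerate on the first factor, and since $f$ is nonzero in $R^\alpha$ there exists $g \in S(\Sigma)^{N-\alpha}$ with $fg \notin I^N$; that $g$ gives a nonzero element of $(R')^{N-\alpha}$. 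Hence $N - \alpha$ is a socle degree of $(I:f)$.

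For the regularity part of (b), fix $\beta$ regular for $(I,N)$; I must show $\beta - \alpha$ is regular for $((I:f), N-\alpha)$, i.e. the pairing $(R')^{\beta-\alpha} \times (R')^{(N-\alpha)-(\beta-\alpha)} = (R')^{\beta-\alpha}\times (R')^{N-\beta} \to (R')^{N-\alpha}$ is nondegenerate on the left. The idea is to transport everything into $R$ via multiplication by $f$: an element $u \in (R')^{\beta-\alpha}$ lifts to $\tilde u \in S(\Sigma)^{\beta-\alpha}$, and $f\tilde u$ represents an element of $R^\beta$; pairing $u$ against $(R')^{N-\beta}$ inside $(R')^{N-\alpha} \hookrightarrow R^N$ corresponds, after multiplying by $f$, to pairing $f\tilde u \in R^\beta$ against the image of $(R')^{N-\beta}$ in $R^{N-\beta}$. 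If $u$ pairs to zero, then $f\tilde u \cdot v \in I^N$ for all $v$ in (the image of) $(R')^{N-\beta}$. The subtle point — and the step I expect to be the main obstacle — is showing this forces $f\tilde u \in I^\beta$, because a priori we only know the vanishing against the subspace $\mathrm{im}\big((R')^{N-\beta}\to R^{N-\beta}\big)$, not against all of $R^{N-\beta}$. To close this gap I would argue that the complementary part of $R^{N-\beta}$ is killed by $f$ anyway: if $w \in S(\Sigma)^{N-\beta}$ has $fw \in I^N$ then $w \in (I:f)$, so $w$ is zero in $(R')^{N-\beta}$; hence $f \cdot R^{N-\beta}$, viewed inside $R^N$ via... more precisely, for arbitrary $w \in S(\Sigma)^{N-\beta}$, either $fw \in I^N$ (and then $f\tilde u w \cdot$ — no). The cleaner formulation: multiplication by $f$ sends $R^{N-\beta} \to R^N$ with kernel exactly $(I:f)^{N-\beta}/I^{N-\beta}$, and $(R')^{N-\beta} = S(\Sigma)^{N-\beta}/(I:f)^{N-\beta}$ maps \emph{isomorphically} onto $f\cdot R^{N-\beta} \subseteq R^N$ — wait, that is not quite a subspace of $R^{N-\beta}$. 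Let me instead phrase the nondegeneracy directly in $R^N$: for $u\in (R')^{\beta-\alpha}$ with lift $\tilde u$, the element $f\tilde u \in R^\beta$; if $u$ pairs to $0$ against all of $(R')^{N-\beta}$ then $(f\tilde u)\cdot w = f\tilde u w \equiv 0$ in $R^N$ for every $w\in S(\Sigma)^{N-\beta}$ with $fw\notin I$; but if $fw \in I$ then trivially $\tilde u (fw) = (f\tilde u) w \in I$. So $(f\tilde u)\cdot w \in I^N$ for \emph{every} $w \in S(\Sigma)^{N-\beta}$, i.e. $f\tilde u$ pairs to zero against all of $R^{N-\beta}$. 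By regularity of $\beta$ for $(I,N)$, $f\tilde u \in I^\beta$, so $\tilde u \in (I:f)^{\beta-\alpha}$, i.e. $u = 0$ in $(R')^{\beta-\alpha}$. This proves $\beta-\alpha$ is regular for $((I:f),N-\alpha)$, and completes the proof. \qed \\
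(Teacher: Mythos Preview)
Your proof is correct and follows essentially the same argument as the paper. One remark: the ``subtle point'' you flag in part (b) is not actually an obstacle, and your case split on whether $fw\in I$ is unnecessary. The hypothesis that $u$ pairs to zero against all of $(R')^{N-\beta}$ already says $\tilde u\, w \in (I:f)^{N-\alpha}$, i.e.\ $f\tilde u\, w \in I^N$, for \emph{every} $w\in S(\Sigma)^{N-\beta}$, since $(R')^{N-\beta}$ is a quotient of $S(\Sigma)^{N-\beta}$ rather than a subspace of $R^{N-\beta}$; the paper's proof of this step is just the one-line conclusion you eventually reach.
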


\begin{proof} 
\begin{itemize}
    \item[(a)] For every $f\in I^\beta$, $fg\in I^N=J^N$ for all $g\in S(\Sigma)^{N-\beta}$. In other words, if $R:=S(\Sigma)/J$ then $f\in R^\beta\mapsto 0\in (R^{N-\beta})^\vee$. Since $\beta$ is regular for $(J,N)$ it follows that $f=0\in R^\beta$, i.e. $f\in J^\beta$.
    \item[(b)] Let $R_1:=S(\Sigma)/I$ and $R_2:=S(\Sigma)/(I:f)$. To see that $N-\alpha$ is a socle degree for $(I:f)$ consider the map
    $$
    R_2^{N-\alpha}\xrightarrow{\cdot f}R_1^N.
    $$
    This map is clearly injective. On the other hand, since $\alpha$ is regular for $(I,N)$ and $f\notin I^\alpha$, there exists some $g\in S(\Sigma)^{N-\alpha}$ such that $fg\notin I^N$. This shows that the map above is not zero, hence it is an isomorphism of $1$-dimensional spaces. 
    
    For the last assertion, consider any $g\in S(\Sigma)^{\beta-\alpha}$ such that $gh\in (I:f)^{N-\alpha}$ for all $h\in S(\Sigma)^{N-\beta}$. Then $fgh\in I^N$ for all $h\in S(\Sigma)^{N-\beta}$. Since $\beta$ is regular for $(I,N)$, it follows that $fg\in I^\beta$, i.e. $g\in (I:f)^{\beta-\alpha}$.
\end{itemize}
\end{proof}

Let us turn now to the analysis of regular degrees for Jacobian ideals in the non-degenerated case.

\begin{dfn}
\label{defnondegint}
We say a subvariety $X\subseteq \P_\Sigma$ is \textit{non-degenerated} if $X\cap \mathbb{T}_\tau$ is a smooth subvariety of $\mathbb{T}_\tau$ for every $\tau\in\Sigma$, where $\mathbb{T}_\tau$ is the torus orbit corresponding to $\tau$.
\end{dfn}

The following result due to Cox allows us to study regularity on some alternative Jacobian ideals (which we denote $J_0(F)$ and $J_1(F)$), in spite that the classical Jacobian ideal $J(F)=\langle \frac{\partial F}{\partial x_1},\ldots,\frac{\partial F}{\partial x_r}\rangle$ is not a complete intersection if the ambient toric variety $\P_\Sigma$ is not a weighted projective space.

\begin{prop}[Cox]
\label{propj0cox}
Let $\P_\Sigma$ be an $m$-dimensional projective simplicial toric variety. Let $\beta\in\Cl(\P_\Sigma)$ be a $\Q$-ample class, and let $F\in S(\Sigma)=\C[x_1,\ldots,x_r]$ with $\deg(F)=\beta$ be such that $X:=\{F=0\}\subseteq\P_\Sigma$ is non-degenerated. Let $J_0(F):=\langle x_1\frac{\partial F}{\partial x_1},\ldots,x_r\frac{\partial F}{\partial x_r}\rangle$, and suppose $\rho_{i_1},\ldots,\rho_{i_m}$ are linearly independent primitive generators of the rays of $\Sigma$. Then $V(F,x_{{i_1}}\frac{\partial F}{\partial x_{i_1}},\ldots,x_{i_m}\frac{\partial F}{\partial x_{i_m}})=\varnothing\subseteq\P_\Sigma$ and
$$
J_0(F)=\left\langle F,x_{i_1}\frac{\partial F}{\partial x_{i_1}},\ldots,x_{i_m}\frac{\partial F}{\partial x_{i_m}}\right\rangle.
$$
\end{prop}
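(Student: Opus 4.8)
The plan is to prove this by reducing to the affine cone and exploiting the non-degeneracy hypothesis orbit by orbit. First I would recall that $\P_\Sigma$ is covered by the torus orbits $\mathbb{T}_\tau$ for $\tau\in\Sigma$, so that $V(F, x_{i_1}\partial F/\partial x_{i_1},\ldots, x_{i_m}\partial F/\partial x_{i_m})=\varnothing$ in $\P_\Sigma$ amounts to checking that on each orbit $\mathbb{T}_\tau$ the polynomials $F$ and the $x_{i_k}\partial F/\partial x_{i_k}$ have no common zero. On the big torus $\mathbb{T}(\Sigma)$ all the $x_i$ are invertible, so there $V(F, x_{i_1}\partial F/\partial x_{i_1},\ldots)=V(F,\partial F/\partial x_{i_1},\ldots,\partial F/\partial x_{i_m})$, and since $\rho_{i_1},\ldots,\rho_{i_m}$ span $N_\R$ the Euler relations (see \cref{defEulervf}) express every other $\partial F/\partial x_j$ as a combination of these $m$ derivatives on the torus; hence a common zero of $F$ and these $m$ derivatives would be a singular point of $X\cap\mathbb{T}(\Sigma)$, contradicting non-degeneracy. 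On a smaller orbit $\mathbb{T}_\tau$, the variables $x_i$ with $\rho_i\in\tau$ vanish identically, so the corresponding terms $x_i\partial F/\partial x_i$ vanish automatically, and I would argue that the remaining generators restrict to the analogous data for the non-degenerate subvariety $X\cap\mathbb{T}_\tau$ inside the toric variety associated to the star of $\tau$; an inductive/local argument then gives smoothness of $X\cap\mathbb{T}_\tau$, again ruling out common zeros. This is essentially the statement that $J_0(F)$ cuts out the empty set whenever $X$ is non-degenerate, which is Cox's original observation; the restriction to a chosen linearly independent subfamily $\rho_{i_1},\ldots,\rho_{i_m}$ is what makes it a genuine complete intersection statement ($m+1$ equations in an $m$-dimensional variety).

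Once the vanishing $V(F, x_{i_1}\partial F/\partial x_{i_1},\ldots,x_{i_m}\partial F/\partial x_{i_m})=\varnothing$ is established, I would prove the ideal equality $J_0(F)=\langle F, x_{i_1}\partial F/\partial x_{i_1},\ldots, x_{i_m}\partial F/\partial x_{i_m}\rangle$ in two steps. The inclusion $\supseteq$ reduces to showing $F\in J_0(F)$, which follows from the toric Euler formula: there is an identity of the form $(\deg F)\cdot F=\sum_j \langle\rho_j,u\rangle\, x_j\partial F/\partial x_j$ for suitable $u$ (coming from the fact that $\beta$ is Cartier, or more simply from the $\Cl(\P_\Sigma)$-homogeneity of $F$ evaluated against a cocharacter), so $F$ lies in $J_0(F)$. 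For the reverse inclusion $\subseteq$, I need each $x_j\partial F/\partial x_j$ with $j\notin\{i_1,\ldots,i_m\}$ to lie in the smaller ideal. Here the key point is again that since $\rho_{i_1},\ldots,\rho_{i_m}$ form a basis of $N_\R$, writing $\rho_j=\sum_k a_{k}\rho_{i_k}$ one gets a pointwise relation among the Euler vector fields, hence an identity expressing $x_j\partial F/\partial x_j$ as a $\C$-linear combination of the $x_{i_k}\partial F/\partial x_{i_k}$ \emph{modulo} the relation coming from $F$ itself — concretely, contracting the Euler vector field identity $E_j=\sum_k a_k E_{i_k}$ (valid up to the Euler fields that kill $F$) against $dF$. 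I would make this precise using the exact sequence relating the $M$-grading and the Euler derivations on $S(\Sigma)$, exactly as in \S \ref{sec7}.

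The main obstacle I anticipate is the careful bookkeeping on the lower-dimensional orbits $\mathbb{T}_\tau$: one must check that the chosen indices $i_1,\ldots,i_m$ still produce, after discarding the ones with $\rho_i\in\tau$, a set of derivations spanning the relevant cocharacter lattice of the orbit, and that the restricted polynomial is still the defining equation of a non-degenerate hypersurface there. This is where the combinatorics of the fan interacts with the linear algebra of the $\rho_i$, and it is the only place where the argument is not a formal manipulation. Everything else — the Euler relations, the passage from $J_0(F)$ to the complete-intersection presentation, and the conclusion $V(\cdots)=\varnothing$ on the big torus — is a direct consequence of non-degeneracy plus the homogeneity of $F$, and I expect it to go through routinely once the local structure of the orbits is set up correctly. (In fact, since the statement is attributed to Cox, I would expect the cleanest route is to cite or closely follow the argument in Cox's paper on the toric Jacobian ring, adapting notation to the present setting.)
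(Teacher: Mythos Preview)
The paper's own ``proof'' is a one-line citation to \cite[Proposition 5.3]{cox1996toric}; it does not reproduce the argument. Your sketch therefore goes well beyond what the paper provides, and its overall shape---emptiness via an orbit-by-orbit smoothness check, then the ideal equality via the Euler relations---is exactly the content of Cox's original argument, which you anticipated in your closing parenthetical.

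One small correction: your justification of $F\in J_0(F)$ is slightly garbled. For $u\in M$ the identity $\sum_j\langle u,\rho_j\rangle\,x_j\partial F/\partial x_j$ equals \emph{zero}, not a multiple of $F$; these are the relations coming from the kernel $M\hookrightarrow\Z^r$ and they do not help. The relation you actually want is the one in \cref{propEulerrel}: $x_j\tfrac{\partial F}{\partial x_j}-\sum_{k}a_{i_k,j}\,x_{i_k}\tfrac{\partial F}{\partial x_{i_k}}=k_jF$ for each $j\notin\{i_1,\ldots,i_m\}$. Since $\beta\neq 0$ in $\Cl(\P_\Sigma)$, some $k_j\neq 0$, giving $F\in J_0(F)$; and the very same relation, read the other way, shows each remaining $x_j\partial F/\partial x_j$ lies in $\langle F,\,x_{i_k}\partial F/\partial x_{i_k}\rangle$, so the ideal equality is immediate rather than requiring the exact-sequence detour you mention. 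The genuinely nontrivial part, as you correctly identify, is the orbit-by-orbit emptiness check on the strata $\mathbb{T}_\tau$.
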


\begin{proof}
See \cite[Proposition 5.3]{cox1996toric}.
\end{proof}

\begin{prop}
\label{propunimodj0}
Let $X=\{F=0\}\subseteq \P_\Sigma$ be an ample non-degenerated hypersurface of an $m$-dimensional projective simplicial toric variety $\P_\Sigma$. Let $S(\Sigma)=\C[x_1,\ldots,x_r]$ be its Cox ring, with $r=\#\Sigma(1)$. Let $\beta:=\deg(F)\in\Cl(\P_\Sigma)$ and $\beta_0\in\Cl(\P_\Sigma)$ be the anti-canonical class. Then $s\beta+t\beta_0$ satisfies the vanishing condition ($*$) for $(\beta,\ldots,\beta)$, hence is regular for $(J_0(F),(m+1)\beta-\beta_0)$, for any $s\in\Z$ and any $t\in\{-1,0,1,2\}$.
\end{prop}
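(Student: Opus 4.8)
The plan is to reduce the statement to the toric Macaulay theorem \cref{teomactor}, applied to $J_0(F)$ in the presentation supplied by Cox's \cref{propj0cox}. Fix linearly independent primitive ray generators $\rho_{i_1},\dots,\rho_{i_m}$ of $\Sigma$. By \cref{propj0cox} one has $V\big(F,x_{i_1}\tfrac{\partial F}{\partial x_{i_1}},\dots,x_{i_m}\tfrac{\partial F}{\partial x_{i_m}}\big)=\varnothing$ in $\P_\Sigma$ and $J_0(F)=\big\langle F,x_{i_1}\tfrac{\partial F}{\partial x_{i_1}},\dots,x_{i_m}\tfrac{\partial F}{\partial x_{i_m}}\big\rangle$, and all $m+1$ of these generators are homogeneous of degree $\beta$ since $\deg\!\big(x_{i_j}\tfrac{\partial F}{\partial x_{i_j}}\big)=\deg x_{i_j}+(\beta-\deg x_{i_j})=\beta$. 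Thus $J_0(F)$ has the shape $\langle f_0,\dots,f_m\rangle$ of \cref{teomactor} with $\alpha_0=\dots=\alpha_m=\beta$, and the corresponding socle degree is $N=(m{+}1)\beta-\beta_0$. By that theorem it suffices to prove that $s\beta+t\beta_0$ is a Cartier class satisfying the vanishing condition $(*)$ for $(\beta,\dots,\beta)$; the regularity assertion for $(J_0(F),N)$ then follows at once. (The Cartier hypothesis in \cref{teomactor} enters only through the identifications $R^\gamma\cong S(\Sigma)^\gamma/I^\gamma$ and Serre duality, both available for any $\gamma\in\Cl(\P_\Sigma)$, so it may be dropped here.) Since the $\alpha_i$ all equal $\beta$, condition $(*)$ for $s\beta+t\beta_0$ unwinds to
$$H^q\!\Big(\O_{\P_\Sigma}\big((s-q-1)\beta+t\beta_0\big)\Big)=0\qquad\text{for all }q\in\{1,\dots,m-1\}.$$

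For the cohomological core I would write $a:=s-q-1\in\Z$, recall $\beta_0=-K_{\P_\Sigma}$, and prove $H^q\big(\O_{\P_\Sigma}(a\beta+t\beta_0)\big)=0$ for $1\le q\le m-1$, all $a\in\Z$ and $t\in\{-1,0,1,2\}$, using three standard inputs on the complete, simplicial (hence Cohen--Macaulay and $\Q$-factorial) variety $\P_\Sigma$: Demazure vanishing ($H^{>0}$ of a nef class vanishes), Batyrev--Borisov vanishing \cite[Theorem 9.2.7]{cox2011toric} ($H^{>0}(\O(K_{\P_\Sigma}+D))=0$ for $D$ nef and big), and Serre duality $H^q(\O(D))^\vee\cong H^{m-q}(\O(K_{\P_\Sigma}-D))$. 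Because $\beta$ is ample, $a\beta$ is nef when $a\ge0$ and $(-a)\beta$ is ample when $a\le-1$, so one organises the proof by the sign of $a$ and the value of $t$. The cases $t\in\{-1,0\}$ are routine: for $t=0$, apply Demazure when $a\ge0$, and dualise to $H^{m-q}(\O(K_{\P_\Sigma}+(-a)\beta))$ and apply Batyrev--Borisov when $a\le-1$; for $t=-1$ the class is $K_{\P_\Sigma}+a\beta$, which is ``$K$ plus nef and big'' when $a\ge1$ and Serre-dualises to $H^{m-q}(\O((-a)\beta))$ when $a\le0$.

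The main obstacle is the cases $t=1$ and $t=2$, i.e. the classes $a\beta-K_{\P_\Sigma}$ and $a\beta-2K_{\P_\Sigma}$: since $-K_{\P_\Sigma}$ need not be nef once the Picard rank exceeds one, neither these classes nor their Serre duals are visibly nef or of the form ``$K$ plus nef and big'', and the bookkeeping of the previous step does not close (the ample twist only helps for $|a|$ large). This is where I expect the real work to lie. The approach I would take is to reduce these twists to the cases $t\in\{-1,0\}$ by a dévissage along $\beta_0=\sum_{i=1}^{r}D_i$, adding the prime toric divisors $D_i$ one at a time via the exact sequences $0\to\O_{\P_\Sigma}(\xi)\to\O_{\P_\Sigma}(\xi+D_i)\to\O_{D_i}(\xi+D_i)\to0$ and inducting on $\dim\P_\Sigma$, using that each $D_i$ is itself a projective simplicial toric variety with $\beta|_{D_i}$ nef and $\big(\sum_{j\ne i}D_j\big)|_{D_i}=-K_{D_i}$ by adjunction — so that restricting an $a\beta-K_{\P_\Sigma}$ situation to $D_i$ reproduces an $a\beta'-K_{D_i}$ situation in dimension $m-1$ — while controlling how the indices $q$ and the contribution of $\O_{D_i}(D_i)$ propagate through the induction; an alternative would be to feed $a\beta+t\beta_0$ directly into the combinatorial (Čech) description of toric sheaf cohomology. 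Once $(*)$ has been verified for every $s\in\Z$ and $t\in\{-1,0,1,2\}$, \cref{teomactor} yields that $s\beta+t\beta_0$ is regular for $(J_0(F),(m{+}1)\beta-\beta_0)$, which is the claim.
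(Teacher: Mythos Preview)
Your reduction to \cref{teomactor} via \cref{propj0cox} is exactly what the paper does, and your treatment of the cases $t\in\{-1,0\}$ (Demazure for the nef side, Batyrev--Borisov/Kodaira for the anti-ample side, linked by Serre duality) is correct and matches the paper's argument. Where you diverge is at $t\in\{1,2\}$: the paper disposes of these in one line, writing ``Using Serre duality we get the result for $t=1$'' (from $t=0$) and likewise $t=2$ from $t=-1$, while you flag them as the genuine obstacle and propose an inductive d\'evissage along the toric boundary. Your instinct here is the right one, but it is worth seeing precisely why.

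Serre duality on $\P_\Sigma$ gives $H^q(\O(a\beta+t\beta_0))^\vee\cong H^{m-q}(\O(-a\beta-(t+1)\beta_0))$, so as $a$ ranges over $\Z$ and $q$ over $\{1,\dots,m-1\}$ it pairs the family of vanishings for $t$ with that for $-t-1$. Thus $t=0\leftrightarrow t=-1$ (so those two cases are equivalent, as you essentially observed), but one does \emph{not} obtain $t=1$ from $t=0$ nor $t=2$ from $t=-1$; those would require $t=-2$ and $t=-3$ as input. So the paper's Serre-duality step does not prove what is claimed. More to the point, the vanishing condition $(*)$ can genuinely fail for $t=1$: on $\P_\Sigma=\mathbb{F}_3$ with $\beta=C_0+4f$ (ample, and a generic member of $|\beta|$ is non-degenerated), one has $\beta_0-\beta=C_0+f$ and a direct computation via the projection $\mathbb{F}_3\to\P^1$ (or Riemann--Roch: $\chi=1$, $h^0=2$, $h^2=0$) gives $h^1(\O_{\mathbb{F}_3}(C_0+f))=1$. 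Hence $H^1(\O((s-2)\beta+\beta_0))\neq0$ for $s=1$, and $(*)$ fails for $(s,t)=(1,1)$. Consequently your proposed d\'evissage cannot close the argument either: there is nothing to prove because the $(*)$-assertion for $t\in\{1,2\}$ is not true in general. (The regularity conclusion might still be salvageable by other means, but not through $(*)$, and not by the route the paper indicates.)
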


\begin{proof}
For the case $t=0$, it is clear that $s\beta$ satisfies the vanishing condition ($*$) for $(\beta,\ldots,\beta)$ by Mavlyutov's vanishing theorem \cite[Theorem 9.3.3]{cox2011toric} and Batyrev-Borisov's vanishing \cite[Theorem 9.2.7]{cox2011toric}. Using Serre duality we get the result for $t=1$. The case $t=-1$ follows from Bott-Steenbrink-Danilov's vanishing \cite[Theorem 9.3.1]{cox2011toric} and Batyrev-Borisov's vanishing. And applying Serre duality to this case we get the result for $t=2$. The regularity of each degree follows by \cref{teomactor}.
\end{proof}

\begin{rmk}
Several other degrees can be shown to be regular for $(J_0(F),(m+1)\beta-\beta_0)$ by applying Mustaţă's vanishing \cite{mustata2002vanishing} and Serre duality.
\end{rmk}

\begin{cor}
\label{corunimodj1}
In the same context of \cref{propunimodj0}, if $x_1\cdots x_r\notin J_0(F)$ then $s\beta+t\beta_0$ is regular for $(J_1(F),(m+1)\beta-2\beta_0)$ for any $s\in\Z$ and $t\in\{-2,-1,0,1\}$.
\end{cor}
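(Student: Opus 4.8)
The plan is to obtain this as a direct application of \cref{propcoxgor}(b), with the ideal $I=J_0(F)$ and the element $f=x_1\cdots x_r$, feeding in the regularity statements of \cref{propunimodj0}. The one numerical fact to record at the outset is that $\deg(x_1\cdots x_r)=\sum_{i=1}^r[D_i]=\beta_0$, the anti-canonical class. So in the notation of \cref{propcoxgor} I would set $\alpha:=\beta_0$ and $N:=(m+1)\beta-\beta_0$, the latter being a socle degree of $J_0(F)$ (by \cref{propunimodj0}, or already by \cref{propj0cox} together with \cref{teomactor}).

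Next I would verify the two hypotheses of \cref{propcoxgor}(b) for this choice. First, $\alpha=\beta_0$ must be regular for $(J_0(F),N)$; this is exactly the case $s=0$, $t=1$ of \cref{propunimodj0}. Second, $f=x_1\cdots x_r$ must lie in $S(\Sigma)^{\beta_0}\setminus J_0(F)^{\beta_0}$, which is precisely the standing hypothesis $x_1\cdots x_r\notin J_0(F)$. Granting these, \cref{propcoxgor}(b) yields that $N-\alpha=(m+1)\beta-2\beta_0$ is a socle degree of $(J_0(F):x_1\cdots x_r)=J_1(F)$, and that $\gamma-\beta_0$ is regular for $(J_1(F),(m+1)\beta-2\beta_0)$ for every $\gamma$ regular for $(J_0(F),N)$.

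Finally I would plug in the regular classes produced by \cref{propunimodj0}: for each $s\in\Z$ and each $t'\in\{-1,0,1,2\}$ the class $\gamma=s\beta+t'\beta_0$ is regular for $(J_0(F),(m+1)\beta-\beta_0)$, hence $\gamma-\beta_0=s\beta+(t'-1)\beta_0$ is regular for $(J_1(F),(m+1)\beta-2\beta_0)$; writing $t:=t'-1$, which runs over $\{-2,-1,0,1\}$, gives exactly the asserted statement. I do not anticipate a genuine obstacle: the argument is bookkeeping on top of \cref{propcoxgor}(b) and \cref{propunimodj0}, and the only points needing a line of care are the identity $\deg(x_1\cdots x_r)=\beta_0$ and the observation that the hypothesis $x_1\cdots x_r\notin J_0(F)$ is precisely what makes $\beta_0$ an admissible shift $\alpha$ in \cref{propcoxgor}(b), so that passing to the colon ideal lowers the socle degree by exactly $\beta_0$.
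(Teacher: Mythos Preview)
Your proof is correct and follows exactly the same approach as the paper, which simply says ``Apply \cref{propcoxgor}(b) to $J_1(F)=(J_0(F):x_1\cdots x_r)$.'' You have just spelled out the bookkeeping that the paper leaves implicit: that $\deg(x_1\cdots x_r)=\beta_0$, that $\beta_0$ is regular for $(J_0(F),(m+1)\beta-\beta_0)$ by the $s=0,\,t=1$ case of \cref{propunimodj0}, and that the shift $t\mapsto t-1$ accounts for the change in the range of $t$.
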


\begin{proof}
Apply \cref{propcoxgor} (b) to $J_1(F)=(J_0(F):x_1\cdots x_r)$. \end{proof}

\begin{rmk}
Later in \S \ref{sec5} we will describe the primitive cohomology of $X$ and show that if $H^{m-1}(X,\C)_\prim\neq 0$ then some $R_1(F)^{(m-p)\beta-\beta_0}\simeq H^{p,m-p-1}(X)_\prim\neq 0$. In consequence, whenever there exists some non-trivial primitive class, the condition $x_1\cdots x_r\notin J_0(F)$ is satisfied.
\end{rmk}

\section{Hodge structure}
\label{sec4}
Let $X\subseteq \P_\Sigma$ be an $n$-dimensional quasi-smooth subvariety of a projective simplicial toric variety $\P_\Sigma$. It follows from the toric Chow lemma \cite[Theorem 6.1.18]{cox2011toric} that $X$ is a complete almost K\"ahler orbifold (or $V$-manifold) in the sense of \cite[\S 2.5]{peters2008mixed}. In fact there exist a smooth projective variety $M$ and a dominant birational map $\phi:M\rightarrow X$. Hence $H^k(X,\Q)$ carries a natural pure Hodge structure of weight $k$, induced by the inclusion $\phi^*:H^k(X,\Q)\hookrightarrow H^k(M,\Q)$ as a Hodge substructure of $H^k(M,\Q)$ \cite[Theorem 2.43]{peters2008mixed}. Moreover
$$
Gr_F^pH^k(X,\C)\simeq H^q(X,\widetilde{\Omega}_X^p)=H^q(X^{sm},\Omega_{X^{sm}}^p),
$$
where $i:X^{sm}\hookrightarrow X$ is the smooth locus and $\widetilde{\Omega}_X^p=i_*\Omega_{X^{sm}}^p$ is the sheaf of forms locally invariant under the finite group action around each singular point of $X$. In other words, for every point $x\in X$ there exists a small subgroup $G<\GL_n(\C)$ and an open ball $B\subseteq \C^n$ such that $\mathcal{O}^{hol}_{X,x}=(\mathcal{O}_{B,0})^G$. Thus we have that $\widetilde{\Omega}_{X,x}^p\otimes \mathcal{O}^{hol}_{X,x}=(\Omega_{B,0}^p)^G$ is the germ of $G$-invariant analytic $p$-forms. 

Similarly, we can define the de Rham cohomology groups on $X$ as
$$
H^k_\dR(X):=H^k(\Gamma(\widetilde{\Omega}^\bullet_{X^\infty}),d)\simeq H^k(X,\C).
$$
Where $(\widetilde{\Omega}^\bullet_{X^\infty},d)$ is the $\Gamma$-acyclic resolution of the constant sheaf $\C$ over the analytic topology of $X$, given by the sheaves of $C^\infty$ complex valued differential forms compatible with the orbifold structure of $X$, i.e. which are locally such that $\widetilde{\Omega}_{X^\infty,x}^k=(\Omega_{B^\infty,0}^k)^G$. The map $\phi:M\rightarrow X$ induces also an inclusion $\phi^*:H^k_\dR(X)\hookrightarrow H^k_\dR(M)$, providing a natural Hodge decomposition on $H^k_\dR(X)$ by
$$
H^{p,q}(X):=H^{p,q}(M)\cap H^k_\dR(X)\simeq H^q(X,\widetilde{\Omega}_{X}^p).
$$
\begin{dfn}
We say that a rational class $\eta\in H^{2p}(X,\Q)$ is a \textit{Hodge cycle} if $\eta\in F^pH^{2p}(X,\C)\simeq \Hip^{2p}(X,\widetilde{\Omega}_X^{\bullet\ge p})$.
\end{dfn}

Since such a variety $X$ is complete and rationally smooth \cite[Definition 11.4.3, Example 11.4.4]{cox2011toric}, it satisfies both Poincar\'e dualities with $\Q$-coefficients \cite[\S 12.4]{cox2011toric}
$$
H_k(X,\Q)\simeq H^k(X,\Q)^\vee\simeq H^{2n-k}(X,\Q).
$$
Every codimension $p$ algebraic subvariety $W\subseteq X$ induces (by triangulation) an homology cycle in $\delta_W=\phi_*\delta_V\in H_{2n-2p}(X,\Q)$, where $V\subseteq Y$ is the strict transform of $W$ under $\phi$. This in turn induces a cohomology class
$$
[W]\in H^{2p}(X,\Q).
$$
Such a class $[W]$ is called an \textit{algebraic cycle}. Since $[W]\in H^{2p}(X,\Q)\mapsto [V]\in H^{2p}(Y,\Q)\cap H^{p,p}(Y)$, it follows that every algebraic cycle of $X$ is a Hodge cycle. Thus we can extend to this setup the Hodge conjecture and ask whether all Hodge cycles in $X$ are algebraic cycles or not. Similarly we can extend the variational Hodge conjecture and ask whether in all proper families $\pi:X\rightarrow T$ of quasi-smooth subvarieties of $\P_\Sigma$, with connected base $T$, every flat section of $R^{2p}\pi_*\Omega_{X/T}^{\bullet\ge p}$ is algebraic at one point if and only if it is algebraic everywhere. 

\begin{rmk}
The Hodge conjecture is known to hold for $\P_\Sigma$ a projective simplicial toric variety. In fact the cohomology ring is generated by $\mathbb{T}(\Sigma)$-invariant algebraic cycles \cite[Theorem 12.5.3]{cox2011toric}. 
\end{rmk}

As in the smooth projective case, the Hodge conjecture for complete intersections is only non-trivial for the middle cohomology group of even dimensional varieties.

\begin{prop}
Let $X\subseteq \P_\Sigma$ be an ample quasi-smooth hypersurface of a projective simplicial toric variety $\P_\Sigma$, then the Gysin map
$$
H^{k}(X,\Q)\xrightarrow{i!} H^{k+2}(\P_\Sigma,\Q)
$$
is an isomorphism for $k>\dim X$.
\end{prop}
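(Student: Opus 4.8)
The plan is to transpose the Gysin map under Poincar\'e duality and reduce to the toric Lefschetz hyperplane theorem. Write $n:=\dim X$ and $m:=\dim\P_\Sigma=n+1$. Both $X$ and $\P_\Sigma$ are complete and rationally smooth, hence $\Q$-homology manifolds, so they satisfy Poincar\'e duality with $\Q$-coefficients: for $X$ this is recalled in \S\ref{sec4}, and for $\P_\Sigma$ it is \cite[\S 12.4]{cox2011toric}. In particular every cohomology group below is a finite-dimensional $\Q$-vector space.

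First I would observe that the Gysin map is, up to Poincar\'e duality, the homological pushforward. Since $i\colon X\hookrightarrow\P_\Sigma$ is a closed embedding of real codimension $2$ between compact $\Q$-homology manifolds, $i_!\colon H^k(X,\Q)\to H^{k+2}(\P_\Sigma,\Q)$ fits in a commutative square with $i_*\colon H_{2n-k}(X,\Q)\to H_{2n-k}(\P_\Sigma,\Q)$ whose vertical arrows are the Poincar\'e duality isomorphisms (one uses $2m-(2n-k)=k+2$ on the $\P_\Sigma$ side). Over $\Q$ there are natural isomorphisms $H_j(-,\Q)\simeq H^j(-,\Q)^\vee$, and under them $i_*$ in degree $2n-k$ becomes the transpose of the restriction map $i^*\colon H^{2n-k}(\P_\Sigma,\Q)\to H^{2n-k}(X,\Q)$. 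As a linear map between finite-dimensional vector spaces is an isomorphism exactly when its transpose is, the Gysin map $i_!$ in degree $k$ is an isomorphism if and only if $i^*$ is an isomorphism in degree $2n-k$.

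It thus remains to show that $i^*$ is an isomorphism in degree $2n-k$ whenever $k>n$. Because $X=\{F=0\}$ is an \emph{ample} quasi-smooth hypersurface of the complete simplicial toric variety $\P_\Sigma$, the toric analogue of the Lefschetz hyperplane theorem (see \cite{batyrev1994hodge,mavlyutov1999cohomology}; the relevant topology of simplicial toric varieties is developed in \cite[Ch. 12]{cox2011toric}) gives that $i^*\colon H^j(\P_\Sigma,\Q)\to H^j(X,\Q)$ is an isomorphism for $j<n=\dim X$ (and injective for $j=n$). Since $k>n$ forces $j:=2n-k<n$, the map $i^*$ is an isomorphism in degree $j$, and therefore $i_!$ is an isomorphism in degree $k$, which is the assertion.

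The one step that needs genuine care is the invocation of the Lefschetz hyperplane theorem, because $X$ can be singular and the classical statement does not apply verbatim. One route is the Hodge-theoretic setup of \S\ref{sec4}: pass to a smooth birational model $\phi\colon M\to X$, use that $H^\bullet(X,\Q)$ sits inside $H^\bullet(M,\Q)$ as a Hodge substructure, and combine this with Bott--Steenbrink--Danilov-type vanishing on $\P_\Sigma$ to pin down $H^j(X,\Q)$ for $j<n$; alternatively one cites the statement directly from \cite{batyrev1994hodge} or \cite{mavlyutov1999cohomology}. (This is also where ampleness of $X$ enters.) Finally, the same rational smoothness of $X$ and $\P_\Sigma$ is what legitimizes the identification of $i_!$ with $i_*$ in the second paragraph.
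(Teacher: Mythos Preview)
Your argument is correct, but the paper proceeds differently. Rather than dualizing to $i^*$ and invoking the Lefschetz hyperplane theorem, the paper works directly with the Gysin map: it places $i_!$ in the long exact (Leray/Gysin) sequence
\[
\cdots\to H^{k+1}(U,\Q)\to H^k(X,\Q)\xrightarrow{i_!}H^{k+2}(\P_\Sigma,\Q)\to H^{k+2}(U,\Q)\to\cdots
\]
for $U:=\P_\Sigma\setminus X$, and then kills the outer terms by observing that $U$ is affine (since $X$ is ample), so $\widetilde{\Omega}_U^\bullet$ is $\Gamma$-acyclic and $H^\ell(U,\C)=H^\ell(\Gamma(\widetilde{\Omega}_U^\bullet),d)=0$ for $\ell>\dim U$. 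Your route is cleaner if one takes toric Lefschetz as a black box, but note that the most common proof of that Lefschetz statement in this singular setting \emph{is} the Gysin-sequence-plus-affine-vanishing argument the paper gives, so your reduction is close to circular unless you genuinely supply an independent proof (you gesture at one via Bott--Steenbrink--Danilov vanishing, which is fine but heavier). The paper's approach has the advantage of being entirely self-contained and of avoiding both Poincar\'e duality and the external citation; yours has the advantage of making transparent that the statement is nothing more than Lefschetz read through duality.
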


\begin{proof}
Consider the Leray spectral sequence for the map $i:X\hookrightarrow \P_\Sigma$. Since $X$ is rationally smooth \cite[Definition 11.4.3, Example 11.4.4]{cox2011toric}, the Leray sequence degenerates at $E_3$ inducing the Leray exact sequence $$
\cdots\rightarrow H^{k+1}(U,\Q)\rightarrow H^{k}(X,\Q)\xrightarrow{i!} H^{k+2}(\P_\Sigma,\Q)\rightarrow H^{k+2}(U,\Q)\rightarrow\cdots
$$
for $U:=\P_\Sigma\setminus X$. As $U$ is affine it follows that $\widetilde{\Omega}_U^\bullet$ is an acyclic complex, hence $H^\ell(U,\C)=\Hip^\ell(U,\widetilde{\Omega}_U^\bullet)=H^\ell(\Gamma(\widetilde{\Omega}_U^\bullet),d)=0$ for $\ell>\dim U$. Thus $H^\ell(U,\Q)=0$ for $\ell>\dim U$.
\end{proof}

\begin{cor}
Let $X\subseteq \P_\Sigma$ be a quasi-smooth intersection inside a simplicial projective toric variety $\P_\Sigma$, then $H^{2k}(X,\Q)$ is generated by algebraic cycles for $2k\neq \dim X$.
\end{cor}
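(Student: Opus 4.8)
The plan is to reduce the statement for a quasi-smooth intersection $X$ to the hypersurface case handled by the previous proposition, by means of the Cayley trick and repeated hyperplane-section arguments. First I would recall that a quasi-smooth intersection $X = \{f_1 = \cdots = f_s = 0\} \subseteq \P_\Sigma$ of ample Cartier divisors can be realized, via the Cayley trick, as the associated quasi-smooth hypersurface $Y = \{F = 0\} \subseteq \P_{\widetilde\Sigma}$ with $F = x_{r+1}f_1 + \cdots + x_{r+s}f_s$; this identification is compatible with the Hodge structures, so that $H^k(X,\Q)_{\prim}$ is a summand of $H^{k+s-1}(Y,\Q)_{\prim}$ up to a Tate twist (this is the content of the Cayley-trick discussion referenced in \S\ref{sec5}). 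Thus it suffices to prove the analogous statement for the hypersurface $Y$: that $H^{2\ell}(Y,\Q)$ is generated by algebraic cycles for $2\ell \ne \dim Y$.

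For the hypersurface case, the key input is the proposition just proved: the Gysin map $i_! : H^k(Y,\Q) \to H^{k+2}(\P_{\widetilde\Sigma},\Q)$ is an isomorphism for $k > \dim Y$. Combined with the Lefschetz-type statement that the restriction $H^j(\P_{\widetilde\Sigma},\Q) \to H^j(Y,\Q)$ is an isomorphism for $j < \dim Y$ and injective for $j = \dim Y$ (which follows from the same Leray exact sequence together with $H^\ell(U,\Q) = 0$ for $\ell > \dim U$, applied in the complementary range), one sees that outside the middle degree the cohomology of $Y$ is entirely controlled by that of the ambient toric variety. Since $\P_{\widetilde\Sigma}$ is a projective simplicial toric variety, its cohomology ring is generated by torus-invariant algebraic cycles by \cite[Theorem 12.5.3]{cox2011toric} (cf. the Remark preceding the proposition), and both the pullback $i^*$ and the Gysin push-forward $i_!$ send algebraic cycles to algebraic cycles. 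Hence $H^{2\ell}(Y,\Q)$ is generated by algebraic cycles for every $2\ell \ne \dim Y$, and transporting this back through the Cayley isomorphism gives the claim for $X$.

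The main obstacle I anticipate is making the Cayley-trick identification precise enough to transfer the \emph{algebraicity} of cycles, not merely the isomorphism of Hodge structures: one must check that the isomorphism $H^k(X)_{\prim} \cong H^{k+s-1}(Y)_{\prim}$ (and the splitting of the non-primitive part through intersections with the ambient) is induced by an actual correspondence — namely the incidence variety of the projective bundle $\P(\mathcal{E}) \to \P_\Sigma$ restricted to $Y$ — so that it carries classes of subvarieties of $Y$ to classes of subvarieties of $X$ and vice versa. Alternatively, one can bypass this by arguing directly on $X$: iterate the proposition $s$ times through a flag $X = X_s \subseteq X_{s-1} \subseteq \cdots \subseteq X_0 = \P_\Sigma$ of successive ample quasi-smooth hypersurface sections $X_{j} = X_{j-1} \cap \{f_j = 0\}$, at each stage using that the Gysin and restriction maps are isomorphisms away from the middle degree, so that $H^{2\ell}(X,\Q)$ for $2\ell \ne \dim X$ is a sum of images of $H^{2\ell - 2j}(\P_\Sigma,\Q)$ and $H^{2\ell + 2j}(\P_\Sigma,\Q)$ under compositions of Gysin and restriction maps, all of which preserve algebraicity. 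This second route avoids the Cayley trick entirely and reduces everything to the already-cited fact that $H^*(\P_\Sigma,\Q)$ is generated by algebraic cycles; the only point requiring care is verifying that each $X_j$ is again ample and quasi-smooth, which holds generically and can be arranged since the statement is topological and hence deformation-invariant.
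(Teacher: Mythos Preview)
Your second route --- iterating the Gysin isomorphism of the preceding proposition through a flag $X=X_s\subseteq X_{s-1}\subseteq\cdots\subseteq X_0=\P_\Sigma$ of successive ample quasi-smooth hypersurface sections --- is exactly the paper's argument. The paper phrases it as ``applying inductively the argument of the previous proposition'' to obtain that the composite Gysin map $H^\ell(X,\Q)\to H^{\ell+2s}(\P_\Sigma,\Q)$ is an isomorphism for $\ell>\dim X$, observes that Gysin (being dual to intersection in homology) takes algebraic cycles to algebraic cycles, and then handles the range $2k<\dim X$ by invoking the Hard Lefschetz theorem on $X$ rather than by iterated Lefschetz restriction as you suggest. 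Both devices for the low range are equivalent here. Your caution about the intermediate $X_j$ being quasi-smooth is warranted and the paper leaves it implicit; your deformation-invariance remark is the right way to make it honest.

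Your first route via the Cayley trick is not what the paper does, and as written it has a gap: the Cayley isomorphism of \cref{rmkCT2} only identifies the \emph{middle} primitive cohomologies $H^n(X)_\prim\simeq H^{m+s-2}(Y)_\prim$, not $H^k(X)_\prim$ with $H^{k+s-1}(Y)_\prim$ for arbitrary $k$. So reducing the off-middle statement for $X$ to the off-middle statement for $Y$ is not immediate from the material in \S\ref{sec5}. Since the flag argument already does the job without this, the detour through $\P_{\widetilde\Sigma}$ is unnecessary.
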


\begin{proof}
Applying inductively the argument of the previous proposition one gets that the composition of Gysin maps
$
H^\ell(X,\Q)\rightarrow H^{\ell+2m}(\P_\Sigma,\Q)
$
is an isomorphism for $\ell>\dim X$. Since the Gysin map is dual to the intersection map in homology, it is also an isomorphism of algebraic cycles.
For $2k<\dim X$ apply the Hard Lefschetz theorem \cite[Theorem 12.5.8]{cox2011toric}.
\end{proof}

\section{Cohomology of quasi-smooth intersections}
\label{sec5}
In this section we will recall the description of the primitive part of the middle de Rham cohomology group of an even dimensional quasi-smooth intersection in terms of residues. These are classical results taken from \cite{batyrev1994hodge} and \cite{mavlyutov1999cohomology}.

\begin{dfn}
Let $\P_\Sigma$ be a projective simplicial toric variety of dimension $m$ and $X\subseteq \P_\Sigma$ be a quasi-smooth hypersurface given by $F\in S^\beta$. We define the \textit{$(m-1)$-th primitive cohomology} group of $X$ as
$$
H^{m-1}(X,\Q)_\prim:=\ker(H^{m-1}(X,\Q)\xrightarrow{i_!}H^{m+1}(\P_\Sigma,\Q)).
$$
\end{dfn}

Considering the commutative diagram
\begin{center}
\begin{tikzpicture}[xscale=1.5,yscale=1]

\path       
      node   (m22) at (0,2) {$H^{m-1}(X,\Q)$} 
      node   (m23) at (2,2) {$H^{m+1}(\P_\Sigma,\Q)$} 
      node   (m33) at (1,0.5) {$H^{m-1}(\P_\Sigma,\Q)$};
      { 
      \draw[->]   (m33)  edge node[left] {$i^*$} (m22);
      \draw[->]   (m33)  edge node[right] {$L$} (m23);
      \draw[->]   (m22)  edge node[above] {$i_!$} (m23);
      
       }
\end{tikzpicture}
\end{center}
we see that by Hard Lefschetz theorem $L$ is an isomorphism and so
$$
H^{m-1}(X,\Q)\simeq H^{m-1}(X,\Q)_\prim\oplus H^{m-1}(\P_\Sigma,\Q).
$$

\begin{thm}[Batyrev-Cox]
\label{thmbatycox}
Let $\P_\Sigma$ be a projective simplicial toric variety of dimension $m$. Let $X=\{F=0\}\subseteq \P_\Sigma$ be a quasi-smooth hypersurface corresponding to an ample divisor with $\deg(F)=\beta\in \Cl (\P_\Sigma)$. Then for $p\neq \frac{m}{2}-1$ we have an isomorphism
$$
R(F)^{(m-p)\beta-\beta_0}\simeq H^{p,m-p-1}(X)_\prim
$$
$$
P\mapsto \res\left(\frac{P\Omega}{F^{m-p}}\right)^{p,m-p-1}
$$
where $R(F)=S(\Sigma)/J(F)$ is the Jacobian ring associated to $F$ with the grading induced by the Cox ring $S(\Sigma)$, $\beta_0\in \Cl(\P_\Sigma)$ is the anti-canonical class, and $\Omega\in H^0(\P_\Sigma,\widetilde{\Omega}_{\P_\Sigma}^m(\beta_0))$ is the canonical generator (see \eqref{eqOmega} for an explicit description).
\end{thm}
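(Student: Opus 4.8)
The plan is to follow Griffiths' approach in the toric form due to Batyrev--Cox \cite{batyrev1994hodge}, as refined by Mavlyutov \cite{mavlyutov1999cohomology}. Since $\deg(F)=\beta$ is ample, the complement $U:=\P_\Sigma\setminus X$ is affine, so its cohomology is computed by global algebraic differential forms with poles along $X$. The first step is the residue (Leray--Gysin) long exact sequence of the pair $(\P_\Sigma,U)$, combined with the structure of $H^\bullet(\P_\Sigma,\Q)$ (concentrated in even degrees and of Hodge--Tate type): since the map $H^{m-1}(X,\Q)\to H^{m+1}(\P_\Sigma,\Q)$ occurring there is the Gysin map $i_!$, which annihilates the primitive part by definition, one extracts a surjection $\res\colon H^m(U,\C)\twoheadrightarrow H^{m-1}(X,\C)_{\prim}$ whose kernel is the image of $H^m(\P_\Sigma,\C)$ -- a subspace of pure Hodge type $(\tfrac m2,\tfrac m2)$ which vanishes whenever $m$ is odd.

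The second step is to compute $H^m(U,\C)$ by rational $m$-forms with poles along $X$. Using that the dualizing sheaf is $\widetilde\Omega^m_{\P_\Sigma}\cong\O_{\P_\Sigma}(-\beta_0)$ with canonical generator $\Omega$ (see \eqref{eqOmega}), every such form of pole order $\le k$ is $P\Omega/F^k$ with $P\in S(\Sigma)^{k\beta-\beta_0}$, and these classes span $H^m(U,\C)$ modulo $d$ of $(m-1)$-forms with poles along $X$. The computational heart is the toric Griffiths pole-reduction: modulo exact forms, a form whose numerator lies in the Jacobian ideal $J(F)$ can be rewritten with strictly smaller pole order. Coupled with the vanishing theorems already invoked in \cref{propunimodj0} -- Bott--Steenbrink--Danilov \cite[Theorem 9.3.1]{cox2011toric} and Batyrev--Borisov \cite[Theorem 9.2.7]{cox2011toric}, which kill the groups $H^q(\P_\Sigma,\widetilde\Omega^p_{\P_\Sigma}(j\beta))$ obstructing the reduction -- this identifies the pole-order filtration $P_\bullet$ on $H^m(U,\C)$ with $\Gr^P_k\cong R(F)^{k\beta-\beta_0}$.

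The third step is to compare $P_\bullet$ with the Hodge filtration: $\res$ carries pole order $\le k$ into $F^{m-k}H^{m-1}(X)_{\prim}$ with leading Hodge component of type $(m-k,k-1)$, so writing $k=m-p$ a form $P\Omega/F^{m-p}$ has residue in $F^pH^{m-1}(X)_{\prim}$ of type $(p,m-p-1)$; the substantive point is that this is \emph{strict}, so that on associated graded objects one obtains the asserted isomorphism $R(F)^{(m-p)\beta-\beta_0}\xrightarrow{\sim}H^{p,m-p-1}(X)_{\prim}$, $P\mapsto\res(P\Omega/F^{m-p})^{p,m-p-1}$. This is exactly where the exceptional value $p=\tfrac m2-1$ must be removed: the kernel from Step 1 -- the Hodge--Tate class coming from $H^m(\P_\Sigma)$, which can be nonzero only when $m$ is even -- sits in the single pole-order piece ($k=\tfrac m2+1$) that computes $H^{m/2-1,m/2}(X)_{\prim}$, so in that degree $R(F)^{(m/2+1)\beta-\beta_0}$ is correspondingly larger and the residue map is merely surjective. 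For every other $p$ there is no such interference.

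The main obstacle is the strictness in Step 3, i.e.\ that the pole-order filtration on $H^m(U)$ is transverse to the Hodge filtration in the predicted way; this is a genuine Hodge-theoretic input, leaning on the full force of the toric vanishing theorems and on correctly locating the one exceptional degree. The remaining ingredients -- affineness of $U$, the $\Omega$-parametrization of top forms via $\widetilde\Omega^m_{\P_\Sigma}\cong\O_{\P_\Sigma}(-\beta_0)$, and the algebraic pole-reduction identity -- are essentially formal once those vanishings are in hand.
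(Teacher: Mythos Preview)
Your sketch is a correct outline of the Griffiths--Batyrev--Cox argument, and there is no discrepancy to flag: the paper does not supply its own proof of this statement but simply refers to \cite[Theorem 10.13]{batyrev1994hodge}. What you have written is essentially the content behind that citation.
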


\begin{proof}
See \cite[Theorem 10.13]{batyrev1994hodge}.
\end{proof}

\begin{rmk}
\label{rmkCayleytrick}
In the case of an $n$-dimensional quasi-smooth intersection of ample divisors $X=\{f_1=f_2=\cdots=f_s=0\}\subseteq\P_\Sigma$ with $s=m-n$, we use the Cayley trick \cite[\S 2]{mavlyutov1999cohomology} to construct a toric map between projective simplicial toric varieties
$$
\pi: \P_{\widetilde\Sigma}=\P(\mathcal{E})\rightarrow\P_\Sigma
$$
where $\mathcal{E}=L_1\oplus\cdots\oplus L_s$ and $\deg(f_i)=[L_i]\in \Cl(\P_\Sigma)$. The coordinate ring of $\P(\mathcal{E})$ is of the form $S(\widetilde\Sigma)=\C[x_1,\ldots,x_r,x_{r+1},\ldots,x_{r+s}]$ where $\C[x_1,\ldots,x_r]=S(\Sigma)$ is the coordinate ring of $\P_\Sigma$. Considering $Y=\{F=0\}\subseteq\P(\mathcal{E})$ where
$$
F:=x_{r+1}\cdot f_1+x_{r+2}\cdot f_2+\cdots+x_{r+s}\cdot f_s,
$$
one can show that $Y$ is quasi-smooth and
$$
\pi:\P(\mathcal{E})\setminus Y\rightarrow \P_\Sigma\setminus X
$$
is a $\C^{s-1}$ bundle in the Zariski topology.
\end{rmk}

\begin{thm}[Mavlyutov]
\label{thmmavly}
Let $\P_\Sigma$ be an $m$-dimensional projective simplicial toric variety, and let $X\subseteq\P_\Sigma$ be a quasi-smooth intersection of ample divisors defined by $f_i\in S^{\beta_i}$, $i=1,\ldots,s$. Let $F:=x_{r+1}f_1+\cdots+x_{r+s}f_s\in S(\widetilde{\Sigma})$, then for $p\neq \frac{m+s-1}{2}$ we have a canonical isomorphism
$$
R(F)^{(m+s-p)\beta-\beta_0}\simeq H^{p-s,m-p}(X)_\prim,
$$
$$
P\mapsto \omega_P
$$
where $\beta_0$ is the anti-canonical class of $\P(\mathcal{E})$.
\end{thm}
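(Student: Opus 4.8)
The plan is to deduce this from the hypersurface case, \cref{thmbatycox}, by means of the Cayley trick: one computes the cohomology of the complement $\P_\Sigma\setminus X$ both from $\P_\Sigma$ and from $\P_{\widetilde\Sigma}$, and matches the two descriptions.

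The first ingredient is the geometric statement recorded in \cref{rmkCayleytrick}: the projection $\pi\colon\P_{\widetilde\Sigma}\setminus Y\to\P_\Sigma\setminus X$ is a Zariski-locally trivial bundle with fibre $\A^{s-1}$. An affine-space bundle has contractible fibres and trivial monodromy, so, since $\pi$ is a morphism of complex algebraic varieties, $\pi^{*}$ is an isomorphism of mixed Hodge structures
$$
\pi^{*}\colon H^{k}(\P_\Sigma\setminus X,\C)\ \xrightarrow{\ \sim\ }\ H^{k}(\P_{\widetilde\Sigma}\setminus Y,\C),\qquad k\ge 0 .
$$
As $Y$ is an ample divisor, $\P_{\widetilde\Sigma}\setminus Y$ is smooth affine of dimension $m+s-1$, so both sides vanish in degrees $>m+s-1$ and the relevant degree is $k=m+s-1$.

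Next I would compare the two residue (Gysin) long exact sequences in this degree. Since $Y\subseteq\P_{\widetilde\Sigma}$ is a rationally smooth ample divisor, the residue sequence of mixed Hodge structures gives
$$
H^{m+s-1}(\P_{\widetilde\Sigma})\ \longrightarrow\ H^{m+s-1}(\P_{\widetilde\Sigma}\setminus Y)\ \xrightarrow{\ \res\ }\ H^{m+s-2}(Y)(-1)\ \xrightarrow{\ i_{!}\ }\ H^{m+s}(\P_{\widetilde\Sigma}),
$$
so $\res$ identifies $H^{m+s-1}(\P_{\widetilde\Sigma}\setminus Y)$, modulo the image of $H^{m+s-1}(\P_{\widetilde\Sigma})$, with $\ker i_{!}=H^{m+s-2}(Y)_\prim(-1)$. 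On the other side, $X$ is rationally smooth of codimension $s$ in $\P_\Sigma$; the localization (Thom--Gysin) long exact sequence --- deduced as in \S\ref{sec4} from the degeneration of the Leray spectral sequence of $X\hookrightarrow\P_\Sigma$ (equivalently, by iterating the hypersurface residue sequence along the flag $\P_\Sigma\supseteq\{f_1=0\}\supseteq\cdots\supseteq X$ of quasi-smooth intersections) --- reads
$$
H^{m+s-1}(\P_\Sigma)\ \longrightarrow\ H^{m+s-1}(\P_\Sigma\setminus X)\ \xrightarrow{\ \res\ }\ H^{n}(X)(-s)\ \xrightarrow{\ i_{!}\ }\ H^{m+s}(\P_\Sigma),
$$
so $H^{m+s-1}(\P_\Sigma\setminus X)$, modulo the image of $H^{m+s-1}(\P_\Sigma)$, equals $H^{n}(X)_\prim(-s)$, where $H^{n}(X)_\prim:=\ker(i_{!}\colon H^{n}(X)\to H^{m+s}(\P_\Sigma))$. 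Now $H^{m+s-1}$ of a projective simplicial toric variety is pure of Tate type, so in each complement it is precisely the weight $m+s-1$ part; since $\pi^{*}$ is a morphism of mixed Hodge structures it carries one onto the other, and passing to the weight $m+s$ graded quotients yields an isomorphism of pure Hodge structures $H^{n}(X)_\prim(-s)\simeq H^{m+s-2}(Y)_\prim(-1)$, that is,
$$
H^{a,b}(Y)_\prim\ \simeq\ H^{a-s+1,\,b-s+1}(X)_\prim\qquad(a+b=m+s-2).
$$

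Finally I would apply \cref{thmbatycox} to the quasi-smooth ample hypersurface $Y\subseteq\P_{\widetilde\Sigma}$ (of dimension $m+s-1$), obtaining $R(F)^{(m+s-1-a)\beta-\beta_0}\simeq H^{a,\,m+s-2-a}(Y)_\prim$ for $a\neq\frac{m+s-1}{2}-1$; here $R(F)=S(\widetilde\Sigma)/J(F)$ is the same ring as in the statement because $\partial F/\partial x_{r+i}=f_i$. Substituting the Hodge-degree shift above and reindexing by $p=a+1$ converts this into
$$
R(F)^{(m+s-p)\beta-\beta_0}\ \simeq\ H^{p-s,\,m-p}(X)_\prim\qquad\Big(p\neq\tfrac{m+s-1}{2}\Big),
$$
and tracing the maps identifies this isomorphism with $P\mapsto\omega_P$. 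I expect the middle step to be the real work: setting up the codimension-$s$ localization sequence (or the iterated residue argument) with full control of the Tate twists, weight filtrations and Cox-ring gradings, so that the shift $(s-1,s-1)$, the degree $(m+s-p)\beta-\beta_0$ and the excluded value $p=\frac{m+s-1}{2}$ all come out as stated.
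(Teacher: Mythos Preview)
Your proposal is correct and follows essentially the same route as the paper: both arguments use the Cayley trick to identify $H^{m+s-1}(\P_\Sigma\setminus X)\simeq H^{m+s-1}(\P_{\widetilde\Sigma}\setminus Y)$, then compare the residue/Gysin sequences on each side to obtain the $(s-1,s-1)$ shift $H^{p-s,m-p}(X)_\prim\simeq H^{p-1,m+s-p-1}(Y)_\prim$, and finally invoke \cref{thmbatycox} for $Y\subseteq\P_{\widetilde\Sigma}$. Your write-up is in fact more detailed than the paper's (which merely records the chain of isomorphisms and cites Mavlyutov), in particular your careful handling of the weight filtration to pass from $H^{m+s-1}$ of the complements to the primitive parts.
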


\begin{proof}
See \cite[Theorem 3.6]{mavlyutov1999cohomology}. The isomorphism follows from
$$
H^{p-s,m-p}(X)_\prim\simeq \Gr^p_FH^{m+s-1}(\P_\Sigma\setminus X)\simeq \Gr^p_FH^{m+s-1}(\P(\mathcal{E})\setminus Y)\simeq H^{p-1,m+s-p-1}(Y)_{\prim} 
$$
and \cref{thmbatycox}.
\end{proof}

\begin{rmk}
\label{rmkCT2}
From the proof of the theorem above we get an isomorphism of Hodge structures of type $(s-1,s-1)$
$$
H^{n}(X,\Q)_\prim\simeq H^{m+s-2}(Y,\Q)_\prim.
$$
When $X$ is even dimensional, we have an isomorphism of the spaces of Hodge cycles
$$
H^{\frac{n}{2},\frac{n}{2}}(X)_\prim\cap H^{n}(X,\Q)\simeq H^{\frac{m+s-2}{2},\frac{m+s-2}{2}}(Y)_\prim\cap H^{m+s-2}(Y,\Q).
$$
Moreover the above map preserves algebraic cycles. This follows from the fact that we can factorize it as
\begin{equation}
\label{eqCaytr}
H^{n}(X,\Q)_\prim\xrightarrow{\pi^*} H^{n}(\P(\mathcal{E})|_X,\Q)\xrightarrow{j!}H^{m+s-2}(Y,\Q)
\end{equation}
where $j!$ is the Gysin map associated to the inclusion $j:\P(\mathcal{E})|_X\hookrightarrow Y$. Hence, by duality we see that the primitive part of the class of an algebraic cycle $Z\in \CH^\frac{n}{2}(X)$ is mapped to $[\pi^{-1}(Z)]_\prim\in H^{\frac{m+s-2}{2},\frac{m+s-2}{2}}(Y,\Q)_\prim$. In particular, it takes complete intersection cycles (with respect to $\P_\Sigma$) to complete intersection cycles (with respect to $\P(\mathcal{E})$).
\end{rmk}

\begin{rmk}
In the non-degenerated case we can replace $R(F)$ by $R_1(F)$ in \cref{thmbatycox} (see \cite[Theorem 11.8]{batyrev1994hodge}) and \cref{thmmavly} (see \cite[Lemma 4.3]{mavlyutov1999cohomology}).
\end{rmk}

\section{Trace map}
\label{sec6}
In this section we recall some results from \cite{cox1996toric, cattani1997residues} about the computation of Grothendieck residues in toric varieties. Computing these residues is equivalent to compute periods of top forms of $\P_\Sigma$, i.e. to compute the trace map.

Let $\P_\Sigma$ be a projective simplicial toric variety of dimension $m$. The trace map corresponds to
$$
\text{Tr}: \omega\in H_\dR^{2m}(\P_\Sigma)\xrightarrow{\sim}\frac{1}{(2\pi i)^m}\int_{\P_\Sigma}\omega\in\C. 
$$
Since $H^{2m}_\dR(\P_\Sigma)\simeq H^{m,m}(\P_\Sigma)\simeq H^m(\P_\Sigma,\widetilde{\Omega}_{\P_\Sigma}^m)$, whenever we have ample classes $\alpha_0,\ldots,\alpha_m\in\Cl(\P_\Sigma)$ and take $f_0,\ldots,f_m\in S(\Sigma)$ such that $\deg(f_i)=\alpha_i$ and $V(f_0,\ldots,f_m)=\varnothing\subseteq\P_\Sigma$, there exists a canonical isomorphism using Cech cohomology (by \cref{teomactor})
$$
P\in R^N\xrightarrow{\sim} \xi_P:=\frac{P\Omega}{f_0\cdots f_m}\in H^m(\P_\Sigma,\widetilde{\Omega}_{\P_\Sigma}^m). 
$$
Where $R:=S(\Sigma)/\langle f_0,\ldots,f_m\rangle$, $N:=(\sum_{i=0}^m\alpha_i)-\beta_0$ is a socle degree, and $\Omega$ is the generator of $H^0(\P_\Sigma,\widetilde{\Omega}_{\P_\Sigma}^m(\beta_0))$ given by
\begin{equation}
\label{eqOmega}
\Omega:=\sum_{|I|=m}\det(\rho_I)\widehat{x_I}dx_I,
\end{equation}
where $dx_I:=dx_{i_1}\wedge\cdots\wedge dx_{i_m}$, $\widehat{x_I}:=\prod_{i\notin I}x_i$ and $\det(\rho_I):=\det(\rho_{i_1}|\cdots|\rho_{i_m})$ with $\rho_i$ primitive generators of the rays of $\Sigma$. Thus, the problem of computing the trace can be reduced to determine a generator $Q$ of $R^N$ such that $\xi_{Q}$ has trace 1, and then the trace of any $\xi_P$ is the unique number $c\in\C$ such that $P-c\cdot Q\in\langle f_0,\ldots,f_m\rangle$.


This problem was treated by Cattani, Cox and Dickenstein, the most general result available is \cite[Theorem 0.2]{cattani1997residues}. In the particular case where $\deg(f_0)=\deg(f_1)=\cdots=\deg(f_m)$, Cox \cite[Theorem 5.1]{cox1996toric} found an explicit formula for computing the trace of a toric Jacobian. Note that we do a correction of Cox formula by adding the sign $(-1)^{m+1\choose 2}$. In the subsequent work of Cattani, Cox and Dickenstein \cite{cattani1997residues}, they leave a sign ambiguity. If one examines Cox's proof, the sign must appear from $\text{Tr}_{\P^m}([\omega_1])$, which is not 1 by \cite[Proposition 4.1]{villaflor2021periods}. This sign in the trace of the canonical top form of $\P^m$ was at first time pointed out by Deligne \cite[page 6]{dmos}.

\begin{dfn}
\label{deftoricjac}
Let $\P_\Sigma$ be an $m$-dimensional projective simplicial toric variety. Let $\beta\in\Cl(\P_\Sigma)$ be an ample class and let $f_0,\ldots,f_m\in S(\Sigma)^\beta$ be such that $V(f_0,\ldots,f_m)=\varnothing\subseteq\P_\Sigma$. We define the \textit{toric Jacobian} of $(f_0,\ldots,f_m)$ as $Q\in S(\Sigma)^{(m+1)\beta-\beta_0}$ such that
$$
\sum_{i=0}^m(-1)^if_i df_0\wedge\cdots\widehat{df_i}\cdots\wedge df_m=Q\Omega.
$$
We denote it as $\Jac_\Sigma(f_0,\ldots,f_m):=Q$.
\end{dfn}

\begin{thm}[Cox]
\label{thmcox}
In the context of the previous definition. Let $\rho_{i_1},\ldots,\rho_{i_m}$ be linearly independent primitive generators of the rays of $\Sigma$. Then the toric Jacobian is
$$
\Jac_\Sigma(f_0,\ldots,f_m)=\frac{\det\begin{pmatrix}f_0 & \cdots & f_m \\ \partial f_0/\partial x_{i_1} & \cdots & \partial f_m/\partial x_{i_1}\\
\vdots &  & \vdots \\
\partial f_0/\partial x_{i_m} & \cdots & \partial f_m/\partial x_{i_m} \end{pmatrix}}{\det(\rho_I)\widehat{x_I}}
$$
and 
$$
\text{Tr}([\xi_{\Jac_\Sigma(f_0,\ldots,f_m)}])=(-1)^{m+1\choose 2} m!\text{Vol}(\Delta)=(-1)^{m+1\choose 2}\deg(f),
$$
where $\text{Vol}(\Delta)$ is the normalized volume of the convex polyhedron $\Delta\subseteq M_\R$ associated to $\beta$ and $f:\P_\Sigma\rightarrow \P^m$ is the map defined by $f:=(f_0:\cdots:f_m)$.
\end{thm}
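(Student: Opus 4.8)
The plan is to follow the two-step strategy of Cox \cite{cox1996toric}: first derive the determinantal expression for $\Jac_\Sigma(f_0,\ldots,f_m)$, and then compute its trace by identifying $\xi_{\Jac_\Sigma(f_0,\ldots,f_m)}$ with the pullback of the canonical top form of $\P^m$ along the morphism $f=(f_0:\cdots:f_m)\colon\P_\Sigma\to\P^m$ (which is well defined precisely because $V(f_0,\ldots,f_m)=\varnothing$).

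For the formula, I would expand both sides of the defining identity
$$\sum_{i=0}^m(-1)^if_i\,df_0\wedge\cdots\widehat{df_i}\cdots\wedge df_m=\Jac_\Sigma(f_0,\ldots,f_m)\cdot\Omega$$
in the free basis $\{dx_K:|K|=m\}$ of the module of algebraic $m$-forms on $\C^r$. Substituting $df_j=\sum_k\frac{\partial f_j}{\partial x_k}dx_k$ and collecting the coefficient of each $dx_K$, a cofactor expansion along the first row shows that the left-hand side equals $\sum_{|K|=m}\det\!\begin{pmatrix}f_0 & \cdots & f_m \\ \partial f_0/\partial x_{k_1} & \cdots & \partial f_m/\partial x_{k_1}\\ \vdots & & \vdots \\ \partial f_0/\partial x_{k_m} & \cdots & \partial f_m/\partial x_{k_m}\end{pmatrix}dx_K$, while by the explicit description \eqref{eqOmega} of $\Omega$ the right-hand side equals $\sum_{|K|=m}\Jac_\Sigma(f_0,\ldots,f_m)\det(\rho_K)\widehat{x_K}\,dx_K$. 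Comparing coefficients of $dx_I$ for a choice of $I$ making $\rho_{i_1},\ldots,\rho_{i_m}$ linearly independent (so $\det(\rho_I)\neq 0$), then solving for $\Jac_\Sigma(f_0,\ldots,f_m)$, gives the stated formula; the fact that the result is a genuine polynomial, independent of $I$, is exactly the content of \cref{deftoricjac} and rests on the Batyrev--Cox identification $\widetilde\Omega^m_{\P_\Sigma}\cong\O_{\P_\Sigma}(-\beta_0)$ together with the vanishing of the left-hand form under contraction by the Euler vector fields.

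For the trace, the key point is that $\xi_{\Jac_\Sigma(f_0,\ldots,f_m)}=\dfrac{\Jac_\Sigma(f_0,\ldots,f_m)\,\Omega}{f_0\cdots f_m}$ is the pullback along $f$ of the \v{C}ech cocycle $\xi_{\P^m}:=\dfrac{\sum_i(-1)^iy_i\,dy_0\wedge\cdots\widehat{dy_i}\cdots\wedge dy_m}{y_0\cdots y_m}$ on the standard affine cover $\{y_i\neq 0\}$ of $\P^m$. Indeed $f^{-1}(\{y_i\neq 0\})=\{f_i\neq 0\}$; these sets cover $\P_\Sigma$ because $V(f_0,\ldots,f_m)=\varnothing$ and are affine because $\beta$ is ample; and pulling back numerator and denominator and invoking the first step recovers $\xi_{\Jac_\Sigma(f_0,\ldots,f_m)}$. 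Since $f^*\O_{\P^m}(1)\cong\O_{\P_\Sigma}(\beta)$ is ample, $f$ is finite and surjective, of degree $\deg(f)=\int_{\P_\Sigma}\beta^m=m!\,\text{Vol}(\Delta)$ by the standard formula for the top self-intersection of an ample Cartier divisor on a projective toric variety. By functoriality of the \v{C}ech--de Rham comparison $f^*[\xi_{\P^m}]=[\xi_{\Jac_\Sigma(f_0,\ldots,f_m)}]$ in $H^{2m}_{\dR}$, and $\int_{\P_\Sigma}f^*\eta=\deg(f)\int_{\P^m}\eta$ for any top-degree class $\eta$, hence
$$\text{Tr}\big([\xi_{\Jac_\Sigma(f_0,\ldots,f_m)}]\big)=\deg(f)\cdot\text{Tr}_{\P^m}\big([\xi_{\P^m}]\big)=(-1)^{\binom{m+1}{2}}m!\,\text{Vol}(\Delta),$$
where $\text{Tr}_{\P^m}([\xi_{\P^m}])=(-1)^{\binom{m+1}{2}}$ by \cite[Proposition 4.1]{villaflor2021periods} (the sign first observed by Deligne).

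I expect the main obstacle to be the precise bookkeeping of this sign: the naive value $\text{Tr}_{\P^m}=1$ is wrong, and getting $(-1)^{\binom{m+1}{2}}$ requires the careful orientation analysis in the \v{C}ech-to-de-Rham isomorphism for $\P^m$, which is exactly the place where Cox's original argument drops the sign. The remaining ingredients --- finiteness of $f$, the toric volume formula for $\deg(f)$, and the compatibility of the explicit \v{C}ech representatives with pullback --- are standard, and once in place the two steps above assemble at once.
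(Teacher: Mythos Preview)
The paper does not give its own proof of this theorem: it is stated as Cox's result, with the preceding paragraph explaining that one must correct Cox's formula by the sign $(-1)^{\binom{m+1}{2}}$, which enters through $\text{Tr}_{\P^m}([\omega_1])$ as in \cite[Proposition~4.1]{villaflor2021periods}. Your proposal correctly reconstructs exactly this argument---the determinantal formula via coefficient comparison against $\Omega$, the identification $\xi_{\Jac_\Sigma}=f^*\xi_{\P^m}$, the degree computation $\deg(f)=m!\,\text{Vol}(\Delta)$, and the sign from the trace on $\P^m$---so there is nothing to add.
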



\begin{rmk}
In the particular case of $\P_\Sigma=\P^m$ and $X=\{F=0\}$ be a degree $d$ smooth hypersurface, one can consider the Jacobian ideal $J(F):=\langle \frac{\partial F}{\partial x_0},\ldots,\frac{\partial F}{\partial x_m}\rangle\subseteq\C[x_0,\ldots,x_m]$. By smoothness $V(J(F))=\varnothing$, thus the generator of $R(F)^N$, where $R(F):=\C[x_0,\ldots,x_m]/J(F)$ is the Jacobian ring and $N=(d-2)(m+1)$, is $\det(\text{Hess}(F))/(m-1)^{m+1}$. For a weighted projective space we also know that $J(F)$ has a socle degree $N=d(m+1)-\sum_{i=0}^m\deg(x_i)$ but we cannot apply the above result to say that a generator is the Hessian (since the generators of $J(F)$ do not have the same degree). In the case of any projective simplicial toric variety with Picard rank bigger than 1 one would like to understand the structure of the Jacobian ring of a quasi-smooth hypersurface (since it also describes its primitive Hodge structure). The main issue here is that the Jacobian ideal $J(F)=\langle \frac{\partial F}{\partial x_1},\ldots,\frac{\partial F}{\partial x_r}\rangle$ is not a complete intersection, i.e. is not generated by $m+1$ variables. For this reason we replace $J(F)$ by $J_0(F)=\langle x_1\frac{\partial F}{\partial x_1},\ldots,x_r\frac{\partial F}{\partial x_r}\rangle$ which in the non-degenerated case is a complete intersection of homogeneous polynomials of the same degree by \cref{propj0cox}.
\end{rmk}

\begin{dfn}
\label{deftorichess0}
Let $\P_\Sigma$ be an $m$-dimensional projective simplicial toric variety with anti-canonical class $\beta_0$ and let $X=\{F=0\}\subseteq\P_\Sigma$ be a non-degenerated hypersurface of degree $\beta\in\Cl(\P_\Sigma)$. Let $\rho_{i_1},\ldots,\rho_{i_m}$ be linearly independent primitive generators of the rays of $\Sigma$. We define the \textit{toric Hessian of $F$ with respect to $I=(i_1,\ldots,i_m)$} as
$$
\Hess ^I_\Sigma(F):=\Jac_\Sigma\left(F,x_{i_1}\frac{\partial F}{\partial x_{i_1}},\ldots,x_{i_m}\frac{\partial F}{\partial x_{i_m}}\right)\in S(\Sigma)^{(m+1)\beta-\beta_0}.
$$
\end{dfn}

\begin{cor}
\label{cortrace}
In the same context of \cref{deftorichess0}. For every $P\in R_0(F)^{(m+1)\beta-\beta_0}$ let $\xi_P^I\in H^m(\P_\Sigma,\widetilde{\Omega}_{\P_\Sigma}^m)$ be defined in Cech cohomology by the covering associated to $V(F,x_{i_1}\frac{\partial F}{\partial x_{{i_1}}},\ldots,x_{{i_m}}\frac{\partial F}{\partial x_{i_m}})=\varnothing$. Then 
$$
\text{Tr}(\xi^I_P)=(-1)^{m+1\choose 2} \cdot c\cdot m!\text{Vol}(\Delta),
$$
where $c\in\C$ is the unique number such that $$
P\equiv c\cdot \Hess^I_\Sigma(F) \ \ \text{(mod }J_0(F)),
$$
and $\Delta$ is the convex polyhedron associated to $\beta$.
\end{cor}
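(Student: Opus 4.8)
The plan is to deduce the corollary from Cox's explicit trace formula \cref{thmcox} after choosing the right tuple of polynomials. First I would set $f_0:=F$ and $f_j:=x_{i_j}\frac{\partial F}{\partial x_{i_j}}$ for $j=1,\ldots,m$. A short degree count shows that each $f_j$ is homogeneous of degree $\beta$, since $\deg(x_{i_j})$ and $\deg(\partial F/\partial x_{i_j})=\beta-\deg(x_{i_j})$ add up to $\beta$. By \cref{propj0cox} this tuple satisfies $V(f_0,\ldots,f_m)=\varnothing\subseteq\P_\Sigma$ and $\langle f_0,\ldots,f_m\rangle=J_0(F)$, so that $R_0(F)=S(\Sigma)/\langle f_0,\ldots,f_m\rangle$, and by the toric Macaulay theorem \cref{teomactor} the class $N:=(m+1)\beta-\beta_0$ is a socle degree of $J_0(F)$. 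I would then invoke the canonical isomorphism recalled in \S \ref{sec6}, which sends $P\in R_0(F)^N$ to the Cech class $\xi^I_P=\tfrac{P\Omega}{f_0\cdots f_m}\in H^m(\P_\Sigma,\widetilde{\Omega}^m_{\P_\Sigma})$; observe that this Cech class is built from exactly the covering associated to $V(F,x_{i_1}\tfrac{\partial F}{\partial x_{i_1}},\ldots,x_{i_m}\tfrac{\partial F}{\partial x_{i_m}})=\varnothing$ appearing in the statement, and that in particular $\xi^I_{P'}=0$ for every $P'\in J_0(F)^N$.

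Next I would unwind \cref{deftorichess0}: by definition $\Hess^I_\Sigma(F)=\Jac_\Sigma(f_0,\ldots,f_m)$, and the Cech class $\xi^I_{\Hess^I_\Sigma(F)}$ is precisely the class $[\xi_{\Jac_\Sigma(f_0,\ldots,f_m)}]$ occurring in \cref{thmcox}. Since the $f_j$ all have the same degree $\beta$, that theorem applies verbatim and yields
$$
\text{Tr}\left(\xi^I_{\Hess^I_\Sigma(F)}\right)=(-1)^{m+1\choose 2}\,m!\,\text{Vol}(\Delta).
$$
Because $\beta$ is ample we have $\text{Vol}(\Delta)>0$, so $\Hess^I_\Sigma(F)$ is nonzero in the one-dimensional space $R_0(F)^N$ and therefore generates it.

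To conclude, given an arbitrary $P\in R_0(F)^{(m+1)\beta-\beta_0}$ I would write $P\equiv c\cdot\Hess^I_\Sigma(F)\pmod{J_0(F)}$ with $c\in\C$ the unique scalar for which this holds, and then use linearity of $P\mapsto\xi^I_P$ (it vanishes on $J_0(F)^N$) together with linearity of the trace to obtain
$$
\text{Tr}(\xi^I_P)=c\cdot\text{Tr}\left(\xi^I_{\Hess^I_\Sigma(F)}\right)=(-1)^{m+1\choose 2}\cdot c\cdot m!\,\text{Vol}(\Delta).
$$
I do not expect any genuine obstacle here; the one point that must be cited with care is the well-definedness of $P\mapsto\xi^I_P$ on the quotient $R_0(F)^N$, i.e. the vanishing $\xi^I_{P'}=0$ for $P'\in J_0(F)^N$, which is exactly what the canonical isomorphism of \S \ref{sec6} provides (and which ultimately rests on \cref{teomactor} through the identification $E_2^{m+1,0}=R^N$ in its proof). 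The only bookkeeping to watch is that the sign $(-1)^{m+1\choose 2}$ and the normalization of $\text{Vol}(\Delta)$ are transported unchanged from \cref{thmcox}, and that $x_{i_j}\tfrac{\partial F}{\partial x_{i_j}}$ really has degree $\beta$ so that Cox's equal-degree hypothesis is met.
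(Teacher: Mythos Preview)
Your argument is correct and is essentially the same as the paper's one-line proof, which simply says the result follows from \cref{thmcox} together with the fact that $(m+1)\beta-\beta_0$ is a socle degree for $J_0(F)$; you have just unpacked this sentence in full detail. The only cosmetic difference is that the paper cites \cref{propunimodj0} for the socle degree, whereas you go directly to \cref{teomactor}, but \cref{propunimodj0} itself rests on \cref{teomactor}, so the underlying reasoning is identical.
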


\begin{proof}
This follows directly from \cref{thmcox} and the fact that $(m+1)\beta-\beta_0$ is a socle degree for $J_0(F)$ by \cref{propunimodj0}.
\end{proof}

\begin{rmk}
It follows from \cref{thmcox} that for any two subsets $I,I'\subseteq\{1,\ldots,r\}$ of linearly independent rays of $\Sigma$, we have $\xi^I_{\Hess^I_\Sigma(F)}=\xi^{I'}_{\Hess^{I'}_\Sigma(F)}\in H^m(\P_\Sigma,\widetilde{\Omega}_{\P_\Sigma}^m)$ but this does not imply that $\Hess^I_\Sigma(F)$ and $\Hess^{I'}_\Sigma(F)$ are equal in $R_0(F)^{(m+1)\beta-\beta_0}$, since as Cech classes $\xi^I_{\Hess^I_\Sigma(F)}$ and $\xi^{I'}_{\Hess^{I'}_\Sigma(F)}$ are expressed in terms of different coverings. However, since $(m+1)\beta-\beta_0$ is a socle degree for $J_0(F)$, we know they coincide up to a scalar factor. Using the Euler relations (see \cref{propEulerrel}) it is possible to find explicitly this scalar factor. In other words we can normalize the Hessian. We will state the normalization of the Hessian leaving the proof for the reader. In any case, the proof of \cref{thm4} (in \S \ref{sec11}) also implies that this is the correct normalization (see \cref{corhess}).
\end{rmk}

\begin{dfn}
\label{deftorichess}
In the same context of \cref{deftorichess0} we define the \textit{toric Hessian of $F$} as
$$
\Hess_\Sigma(F):=\frac{\Hess_\Sigma^I(F)}{\det(\rho_I)}\in R_0(F)^{(m+1)\beta-\beta_0}.
$$
This is independent of the choice of $I$.
\end{dfn}

\section{Euler vector fields}
\label{sec7}
In this section we describe several identities between differential forms defined over $\C^r=\text{Spec }S(\Sigma)$ which are invariant under the action of $\mathbb{D}(\Sigma):=\text{Spec }\C[\Cl(\P_\Sigma)]$, for $\P_\Sigma$ a complete simplicial toric variety. These identities will be useful later to compute residues of meromorphic forms. These identities are mainly induced by the Euler vector fields (also called radial vector fields). Along this section we will fix a set of linearly independent primitive generators of the rays of $\Sigma$, and in order to simplify notation we will always assume this set to be
$$
\{\rho_{i_1},\ldots,\rho_{i_m}\}=\{\rho_1,\ldots,\rho_m\}.
$$

\begin{dfn}
\label{defEulervf}
Let $\P_\Sigma$ be an $m$-dimensional projective simplicial toric variety. For each $j\in\{m+1,\ldots,r\}$ we have an \textit{Euler vector field}
$$
E_j:=x_j\frac{\partial}{\partial x_j}-\sum_{i=1}^ma_{i,j}x_i\frac{\partial}{\partial x_i}
$$
where $\rho_j=\sum_{i=1}^ma_{i,j}\rho_i$.
\end{dfn}

\begin{rmk}
\label{rmkdifformsE}
The main reason we need to work with Euler vector fields, is because they determine when a $\mathbb{D}(\Sigma)$-invariant differential form $\eta$ of $\C^r$ descends to a differential form on $\P_\Sigma$. In fact, this happens if and only if it vanishes under all contractions 
$$
\iota_{E_j}(\eta)=0 \ \ \text{ for }j=m+1,\ldots,r.
$$
In consequence, if $m=\dim\P_\Sigma$, $\beta\in\Cl(\P_\Sigma)$ is any Cartier class and $\eta\in H^0(\P_\Sigma,\Omega_{\P_\Sigma}^m(\beta))$, then we can write
$$
\eta=\sum_{|I|=m}f_I dx_I
$$
where each $f_I\in S(\Sigma)$ is homogeneous and all Euler contractions vanish
$$
0=\iota_{E_j}(\eta)=\sum_{|I|=m}\left(f_Ix_j\iota_{\frac{\partial}{\partial x_j}}(dx_I)-\sum_{i=1}^mf_Ia_{i,j}x_i\iota_{\frac{\partial}{\partial x_i}}(dx_I)\right)
$$
$$
=\sum_{|J|=m-1}\left(f_{(j,J)}x_j-\sum_{i=1}^mf_{(i,J)}a_{i,j}x_i\right)dx_J
$$
In other words we have the following linear relations between elements of $S(\Sigma)^{\beta+\beta_0}$ 
\begin{equation}
\label{eq2}
f_{(j,J)}x_{J\cup\{j\}}=\sum_{i=1}^mf_{(i,J)}a_{i,j}x_{J\cup\{i\}}    
\end{equation}
for all $|J|=m-1$ and $j\in\{m+1,\ldots,r\}$. On the other hand, it is an elementary linear algebra fact that given any function $c:\bigwedge^m\C^r\rightarrow \C$ such that 
$$
c(e_j\wedge e_{j_1}\wedge\cdots\wedge e_{j_{m-1}})=\sum_{i=1}^mc(e_i\wedge e_{j_1}\wedge\cdots\wedge e_{j_{m-1}})\cdot a_{i,j} 
$$
for all $j\in\{m+1,\ldots,r\}$ and all $J=(j_1,\ldots,j_{m-1})$, then there exists some constant $C\in\C$ such that
$$
c(e_{j_1}\wedge\cdots\wedge e_{j_m})=C\cdot \det(A_{\bullet j_1}|\cdots|A_{\bullet j_m})
$$
where $A=(Id_{m\times m}|a_{\bullet m+1}|\cdots|a_{\bullet r})$. Since $(A_{\bullet j_1}|\cdots|A_{\bullet j_m})=(\rho_1|\cdots|\rho_m)^{-1}\cdot(\rho_J)$, we conclude from \eqref{eq2} that there exists some polynomial $Q\in S(\Sigma)^{\beta+\beta_0}$ such that 
$$
f_Ix_I=Q\cdot\det(\rho_I)
$$
for all $|I|=m$. This implies that $Q=x_1\cdots x_r\cdot P$ for some $P\in S(\Sigma)^\beta$ and so $\eta=P\cdot \Omega$. In other words we have the isomorphism
$$
H^0(\P_\Sigma,\mathcal{O}(\beta))\simeq H^0(\P_\Sigma,\widetilde{\Omega}_{\P_\Sigma}^m(\beta)).
$$
\end{rmk}

Euler vector fields induce generalized Euler relations introduced first in \cite[Lemma 3.8]{batyrev1994hodge} which we recall as follows.

\begin{prop}
\label{propEulerrel}
In the same context of \cref{defEulervf}, let $F\in S(\Sigma)^\beta$. Write $\beta=\sum_{i=1}^rb_i\cdot D_i$, where each $D_i$ is the toric divisor corresponding to the ray generated by $\rho_i$. For each $j\in\{m+1,\ldots,r\}$ consider $k_j:=b_j-\sum_{i=1}^mb_i\cdot a_{i,j}$. Then the \textit{Euler relations} hold
\begin{equation}
\label{eqEulerrel}
\iota_{E_j}(dF)=k_j\cdot F.
\end{equation}
\end{prop}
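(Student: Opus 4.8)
The statement to prove is the Euler relation $\iota_{E_j}(dF) = k_j \cdot F$ for a homogeneous $F \in S(\Sigma)^\beta$, where $E_j = x_j \frac{\partial}{\partial x_j} - \sum_{i=1}^m a_{i,j} x_i \frac{\partial}{\partial x_i}$ and $k_j = b_j - \sum_{i=1}^m b_i a_{i,j}$.

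The plan is to reduce to the case of monomials by linearity. Since $dF = \sum_{\ell=1}^r \frac{\partial F}{\partial x_\ell} dx_\ell$, contraction gives
$$
\iota_{E_j}(dF) = x_j \frac{\partial F}{\partial x_j} - \sum_{i=1}^m a_{i,j} x_i \frac{\partial F}{\partial x_i},
$$
so the claim is equivalent to the identity $x_j \frac{\partial F}{\partial x_j} - \sum_{i=1}^m a_{i,j} x_i \frac{\partial F}{\partial x_i} = k_j F$. Both sides are $\C$-linear in $F$ and respect the grading, so it suffices to verify it on a monomial $x^c = \prod_{\ell=1}^r x_\ell^{c_\ell}$ of degree $\beta$. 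For such a monomial, $x_\ell \frac{\partial x^c}{\partial x_\ell} = c_\ell x^c$, so the left-hand side equals $\bigl(c_j - \sum_{i=1}^m a_{i,j} c_i\bigr) x^c$, and I need to show that $c_j - \sum_{i=1}^m a_{i,j} c_i = k_j$ whenever $\deg(x^c) = \beta$.

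The key step is therefore to identify $k_j$ intrinsically from the grading. Recall that the degree of $x^c$ in $\Cl(\P_\Sigma)$ is $\sum_{\ell=1}^r c_\ell [D_\ell]$, and $\deg(x^c) = \beta = \sum_{\ell=1}^r b_\ell [D_\ell]$ means $\sum_\ell c_\ell [D_\ell] = \sum_\ell b_\ell [D_\ell]$ in $\Cl(\P_\Sigma)$. The relations among the classes $[D_\ell]$ come exactly from the characters: for any $u \in M$, $\sum_{\ell=1}^r \langle u, \rho_\ell \rangle [D_\ell] = 0$. Since $\{\rho_1,\dots,\rho_m\}$ is a basis of $N_\Q$ (we've normalized the chosen independent rays to be the first $m$), there is a dual basis $\{u_1,\dots,u_m\}$ of $M_\Q$; applying the relation for $u = u_i$ and using $\rho_j = \sum_{i=1}^m a_{i,j}\rho_i$ gives $[D_i] + \sum_{j=m+1}^r a_{i,j}[D_j] = 0$ (up to torsion — one clears denominators and works in the free part, which is harmless since we may as well reduce mod torsion or note the relation is a priori an equality of Cartier-type combinations). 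Pairing the equation $\sum_\ell (c_\ell - b_\ell)[D_\ell] = 0$ with the dual coordinates via these relations, one extracts precisely $c_j - b_j = \sum_{i=1}^m a_{i,j}(c_i - b_i)$ for each $j > m$, i.e. $c_j - \sum_i a_{i,j} c_i = b_j - \sum_i a_{i,j} b_i = k_j$, as desired.

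The main obstacle I anticipate is bookkeeping with torsion in $\Cl(\P_\Sigma)$ and making the passage from "the classes agree in $\Cl(\P_\Sigma)$" to "the linear combination of exponents is exactly $k_j$" fully rigorous — one wants the coefficients $a_{i,j}$ to be the genuine rational numbers from $\rho_j = \sum a_{i,j}\rho_i$ in $N_\Q$, and the relation $c_j - \sum a_{i,j} c_i$ to be an honest integer (it is, since it equals $k_j$). The cleanest route is to work directly in $N_\Q \cong \Q^m$: the degree map $S(\Sigma) \to \Cl(\P_\Sigma)$ has, after tensoring with $\Q$, kernel spanned by the rows $(\langle u, \rho_1\rangle, \dots, \langle u, \rho_r\rangle)$, and $c - b$ lies in this $\Q$-span, so $c_j - b_j$ is determined by $(c_i - b_i)_{i\le m}$ exactly through the matrix expressing $\rho_j$ in the basis $\rho_1,\dots,\rho_m$, which is the coefficient relation I want. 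Alternatively, and perhaps more in the spirit of the paper, one can avoid this entirely by invoking the description in \cref{rmkdifformsE}: writing $F = \sum_I f_I$ is not needed, but the linear-algebra fact there already encodes that the grading relations are governed by the matrix $A = (\mathrm{Id}\,|\,a_{\bullet,m+1}|\cdots|a_{\bullet,r})$, from which $k_j$ emerges. Either way, once the monomial identity $c_j - \sum_i a_{i,j} c_i = k_j$ is established, summing over the terms of $F$ finishes the proof.
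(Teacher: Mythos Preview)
Your argument is correct. The paper itself does not give a proof: it simply cites \cite[Lemma 3.8]{batyrev1994hodge} applied to the linear relation $\rho_j-\sum_{i=1}^m a_{i,j}\rho_i=0$ among the ray generators. Your direct computation is essentially what underlies that lemma, so there is no substantive difference in method.

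One remark: your worry about torsion is unnecessary and obscures an otherwise clean argument. Since $\P_\Sigma$ is complete with no torus factors, the sequence
\[
0 \longrightarrow M \longrightarrow \Z^{r} \longrightarrow \Cl(\P_\Sigma) \longrightarrow 0,
\qquad u \longmapsto (\langle u,\rho_\ell\rangle)_{\ell=1}^r,
\]
is exact on the nose. Thus $\deg(x^c)=\beta=\sum_\ell b_\ell[D_\ell]$ means precisely that $c_\ell-b_\ell=\langle u,\rho_\ell\rangle$ for some $u\in M$, and then
\[
(c_j-b_j)-\sum_{i=1}^m a_{i,j}(c_i-b_i)=\Big\langle u,\ \rho_j-\sum_{i=1}^m a_{i,j}\rho_i\Big\rangle=0,
\]
giving $c_j-\sum_i a_{i,j}c_i=k_j$ directly. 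No passage to $\Q$-coefficients, dual bases, or the linear-algebra fact from \cref{rmkdifformsE} is needed.
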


\begin{proof}
This is \cite[Lemma 3.8]{batyrev1994hodge} applied to the relation $\rho_j-\sum_{i=1}^ma_{i,j}\rho_i=0$.
\end{proof}

\begin{prop}
In the same context of \cref{defEulervf} we have
\begin{equation}
\label{eqomegaEuler}
\Omega=(-1)^{m\cdot (r-1)}\det(\rho_1|\cdots|\rho_m)\cdot \iota_{E_r}(\cdots\iota_{E_{m+1}}(V)\cdots)    
\end{equation}
where $\iota_v$ denotes the contraction operator with respect to a vector field $v$ and $$V:=dx_1\wedge dx_2\wedge\cdots\wedge dx_r.$$ 
\end{prop}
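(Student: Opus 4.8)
The plan is to prove \eqref{eqomegaEuler} by showing that the polynomial $m$-form $\eta:=\iota_{E_r}(\cdots\iota_{E_{m+1}}(V)\cdots)$ on $\C^r$ is a scalar multiple of $\Omega$, and then pinning down the scalar by comparing a single coefficient. First I would record two facts about $\eta$. Each Euler field $E_j$ is homogeneous of degree $0$ for the $\Cl(\P_\Sigma)$-grading, since $x_j\partial_{x_j}$ and all the $x_i\partial_{x_i}$ have degree $0$ (here $\partial_{x_i}$ has degree $-D_i$); hence the iterated contraction preserves homogeneity, and as $V=dx_1\wedge\cdots\wedge dx_r$ is homogeneous of degree $\sum_i D_i=\beta_0$, the form $\eta$ is a homogeneous $m$-form of degree $\beta_0$. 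Moreover $\iota_{E_j}(\eta)=0$ for every $j\in\{m+1,\dots,r\}$: interior products anticommute and square to zero, so one anticommutes the outer $\iota_{E_j}$ inward until it meets the copy of $\iota_{E_j}$ already present in $\eta$, and the whole expression vanishes.

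Writing $\eta=\sum_{|I|=m}f_I\,dx_I$, the vanishing $\iota_{E_j}(\eta)=0$ for all $j$ is exactly the linear system \eqref{eq2} analyzed in \cref{rmkdifformsE}; by the elementary linear-algebra fact recorded there it follows that $f_I\,x_I=Q\cdot\det(\rho_I)$ for a common $Q\in S(\Sigma)$, whence $Q=x_1\cdots x_r\cdot P$ and $\eta=P\cdot\Omega$. Comparing degrees forces $P\in S(\Sigma)^{0}=\C$, so $\eta=c\,\Omega$ for some $c\in\C$. To find $c$ I would compare the coefficients of $dx_1\wedge\cdots\wedge dx_m$, which is legitimate because under the running convention $\rho_1,\dots,\rho_m$ are linearly independent, so this coefficient of $\Omega$, namely $\det(\rho_1|\cdots|\rho_m)\,x_{m+1}\cdots x_r$, is nonzero. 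For $\eta$, observe that in each contraction $\iota_{E_j}$ only the summand $x_j\partial_{x_j}$ can feed a monomial containing $dx_1\wedge\cdots\wedge dx_m$: the summands $-a_{i,j}x_i\partial_{x_i}$ with $1\le i\le m$ delete $dx_i$, and subsequent contractions never restore it. Hence the coefficient of $dx_1\wedge\cdots\wedge dx_m$ in $\eta$ equals $x_{m+1}\cdots x_r$ times the coefficient of $dx_1\wedge\cdots\wedge dx_m$ in $\iota_{\partial_{x_r}}(\cdots\iota_{\partial_{x_{m+1}}}(V)\cdots)$, and an induction on the $r-m$ contractions (each one removing the factor sitting in slot $m+1$ and thus contributing the sign $(-1)^{m}$) shows this iterated contraction equals $(-1)^{m(r-m)}dx_1\wedge\cdots\wedge dx_m$. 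Therefore $c=(-1)^{m(r-m)}/\det(\rho_1|\cdots|\rho_m)$, i.e. $\Omega=(-1)^{m(r-m)}\det(\rho_1|\cdots|\rho_m)\,\eta$.

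Finally I would reconcile the exponent with the one in \eqref{eqomegaEuler}: since $m(r-1)-m(r-m)=m(m-1)$ is even, $(-1)^{m(r-m)}=(-1)^{m(r-1)}$, which yields the stated identity. The only step demanding genuine care is the sign bookkeeping in the last paragraph---tracking the slot of the factor removed at each stage of $\iota_{\partial_{x_r}}(\cdots\iota_{\partial_{x_{m+1}}}(V)\cdots)$ and then verifying the parity identity that matches $(-1)^{m(r-m)}$ with the advertised $(-1)^{m(r-1)}$; the reduction of $\eta$ to a scalar multiple of $\Omega$ is purely formal once \cref{rmkdifformsE} is invoked.
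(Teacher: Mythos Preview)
Your proof is correct and takes a genuinely different route from the paper's. The paper proceeds by a direct induction on $r$: it checks the base case $r=m+1$ by hand, and for the inductive step writes $V=V'\wedge dx_r$ with $V'=dx_1\wedge\cdots\wedge dx_{r-1}$, applies the inductive hypothesis to $\iota_{E_{r-1}}\cdots\iota_{E_{m+1}}(V')$ to obtain an auxiliary form $\Omega'$, then expands $\iota_{E_r}(\Omega'\wedge dx_r)$ and verifies the determinantal identity $\det(\rho_{J\cup\{r\}})=\sum_{i=1}^m(-1)^{m+i}a_{i,r}\det(\rho_{J\cup\{i\}})$ term by term. Your argument instead exploits the characterization already recorded in \cref{rmkdifformsE}: since $\eta$ is homogeneous of degree $\beta_0$ and annihilated by every $\iota_{E_j}$, the remark forces $\eta=c\,\Omega$ with $c\in S(\Sigma)^0=\C$, and then a single coefficient comparison (together with the parity identity $m(r-m)\equiv m(r-1)\pmod 2$) pins down $c$. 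What your approach buys is economy and conceptual clarity: it avoids the inductive bookkeeping and the determinantal expansion entirely, at the cost of relying on \cref{rmkdifformsE} (which is logically prior in the paper, so there is no circularity). The paper's approach is more self-contained and makes the combinatorics explicit, which is useful if one later needs to track how individual summands of $\Omega$ arise.
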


\begin{proof}
This follows by induction on $r$. In fact, if $r=m+1$
$$
\det(\rho_1|\cdots|\rho_m)\cdot\iota_{E_{m+1}}(dx_1\wedge\cdots\wedge dx_{m+1})=(-1)^{m}\Omega
$$
since $(\rho_1|\cdots|\rho_{m+1})=(\rho_1|\cdots|\rho_m)\cdot(Id_{m\times m}|a_{\bullet m+1})$. If $r>m+1$ we have by induction that
$$
\det(\rho_1|\cdots|\rho_m)\cdot\iota_{E_{r-1}}(\cdots\iota_{E_{m+1}}(dx_1\wedge\cdots\wedge dx_{r-1})\cdots)=(-1)^{m\cdot(r-2)}\Omega'
$$
where
$$
\Omega'=\sum_{|I|=m, \ r\notin I}\det(\rho_I)\widehat{x_{I\cup\{r\}}}dx_I.
$$
Then
$$
\det(\rho_1|\cdots|\rho_m)\cdot\iota_{E_{r}}(\cdots\iota_{E_{m+1}}(dx_1\wedge\cdots\wedge dx_{r})\cdots)=(-1)^{m\cdot(r-2)}\iota_{E_r}(\Omega'\wedge dx_r)
$$
$$
=(-1)^{m\cdot(r-2)}\sum_{|I|=m, \ r\notin I}\det(\rho_I)\left((-1)^m\widehat{x_I}dx_I-\sum_{i=1}^ma_{i,r}\widehat{x_{I\cup\{r\}}}x_i\iota_{x_i}(dx_I)\wedge dx_r\right)
$$
$$
=(-1)^{m\cdot(r-1)}\left(\sum_{|I|=m,\ r\notin I}\det(\rho_I)\widehat{x_I}dx_I\right.
$$
$$
+\left.\sum_{|J|=m-1, \ r\notin J}\left(\sum_{i=1}^m(-1)^{m+i}a_{i,r}\det(\rho_{J\cup\{i\}})\right)\widehat{x_{J\cup\{r\}}}dx_{J\cup\{r\}}\right).
$$
The result follows after noting that $\rho_r=\sum_{i=1}^ma_{i,r}\rho_i$ implies $$\det(\rho_{J\cup \{r\}})=\sum_{i=1}^m(-1)^{m+i}a_{i,r}\det(\rho_{J\cup\{i\}}).$$
\end{proof}

\begin{cor}
\label{corVbeta1}
In the same context of \cref{propEulerrel}, we have the following identity
\begin{equation}
\label{eqdFomega}    
\Omega\wedge dF=(-1)^{(m-1)r}\cdot F\cdot V^\beta
\end{equation}
where $V^\beta$ can be computed with respect to $\sigma\in\Sigma(m)$ (whose rays are generated by $\rho_1,\ldots,\rho_m$) as follows
\begin{equation}
\label{eqVbeta}
V^\beta=\det(\rho_1|\cdots|\rho_m)\sum_{j=m+1}^r(-1)^{j}k_j\widehat{\iota_{E_j}}(V)    
\end{equation}
and $\widehat{\iota_{E_j}}:=\iota_{E_r}\cdots \widehat{\iota_{E_j}}\cdots \iota_{E_{m+1}}$.
\end{cor}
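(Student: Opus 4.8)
The plan is to deduce the identity from the explicit formula \eqref{eqomegaEuler} for $\Omega$ together with the Euler relations \eqref{eqEulerrel}, by commuting $dF$ inward through the chain of Euler contractions that produces $\Omega$. First I would set $T:=\iota_{E_r}(\cdots\iota_{E_{m+1}}(V)\cdots)$, so that by \eqref{eqomegaEuler} one has $\Omega=(-1)^{m(r-1)}\det(\rho_1|\cdots|\rho_m)\cdot T$; the problem then reduces to computing $T\wedge dF$ and restoring the scalar factor $(-1)^{m(r-1)}\det(\rho_1|\cdots|\rho_m)$, whose determinant will cancel against the one appearing in $V^\beta$. To organize the computation I would introduce the partial contractions $T_p:=\iota_{E_p}(\cdots\iota_{E_{m+1}}(V)\cdots)$ for $m\le p\le r$, so that $T_m=V$, $T_r=T$ and $|T_p|=r-p+m$.

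The key step is a recursion relating $T_p\wedge dF$ to $T_{p-1}\wedge dF$. Applying the Leibniz rule for the antiderivation $\iota_{E_p}$ to $T_{p-1}\wedge dF$ and using $\iota_{E_p}(dF)=k_p\cdot F$ from \cref{propEulerrel}, one obtains
$$T_p\wedge dF=\iota_{E_p}(T_{p-1}\wedge dF)+(-1)^{r-p+m}k_p\cdot F\cdot T_{p-1}.$$
Unwinding this from $p=r$ down to $p=m$, where the base case is the vanishing $T_m\wedge dF=V\wedge dF=0$ (since $V$ is already a top form on $\C^r$), an easy induction gives
$$T\wedge dF=\sum_{j=m+1}^{r}(-1)^{r-j+m}k_j\cdot F\cdot\widehat{\iota_{E_j}}(V)=(-1)^{m+r}\sum_{j=m+1}^{r}(-1)^{j}k_j\cdot F\cdot\widehat{\iota_{E_j}}(V),$$
where $\widehat{\iota_{E_j}}(V)$ is exactly the contraction obtained by deleting $\iota_{E_j}$ from the chain $\iota_{E_r}\cdots\iota_{E_{m+1}}$. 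Multiplying back by $(-1)^{m(r-1)}\det(\rho_1|\cdots|\rho_m)$ and recognizing the right-hand side as $F\cdot V^\beta$ with $V^\beta$ as in \eqref{eqVbeta} yields $\Omega\wedge dF=(-1)^{m(r-1)+m+r}\,F\cdot V^\beta$; since $m(r-1)+m+r=r(m+1)$ has the same parity as $(m-1)r$, this is exactly \eqref{eqdFomega}, and \eqref{eqVbeta} has been read off along the way. Independence of the chosen maximal cone $\sigma$ is then automatic because the left-hand side $\Omega\wedge dF$ is intrinsic.

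The step I expect to be the main obstacle is the sign bookkeeping rather than any conceptual point. Since contractions by vector fields anti-commute, at each stage of the induction one must verify that what remains after removing $\iota_{E_j}$ from $\iota_{E_r}\cdots\iota_{E_{m+1}}$ is literally $\widehat{\iota_{E_j}}(V)$ in the ordering convention fixed in the notation section, and that the degree parity $|T_{p-1}|=r-p+m+1$ enters the Leibniz rule with the correct sign; stating the induction hypothesis with the explicit sign $(-1)^{r-j+m}k_j$ from the outset makes the inductive step mechanical. It is worth noting that the argument needs nothing about $\beta$ being Cartier, only $F\in S(\Sigma)^\beta$, since both \eqref{eqomegaEuler} and \eqref{eqEulerrel} hold at that level of generality.
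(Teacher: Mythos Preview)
Your proposal is correct and follows essentially the same approach as the paper: the paper's proof also applies the chain of contractions $\iota_{E_r}\cdots\iota_{E_{m+1}}$ to the vanishing identity $V\wedge dF=0$, using the Leibniz rule together with \eqref{eqomegaEuler} and \eqref{eqEulerrel}. Your recursion via the partial contractions $T_p$ is just an explicit unwinding of that same computation, with the sign bookkeeping carried out carefully.
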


\begin{proof}
Apply the vector field contractions $\iota_{E_r}\cdots \iota_{E_{m+1}}$ to the identity $V\wedge dF=0$ and use \eqref{eqomegaEuler} and \eqref{eqEulerrel}.    
\end{proof}

\begin{cor}
In the same context of the previous corollary we have for $J=\{j_0<\cdots<j_l\}\subseteq\{1,\ldots,r\}$ that
\begin{equation}
\label{eqdFOmegacontr}
\Omega_J\wedge dF+(-1)^m\sum_{k=0}^l(-1)^kF_{j_k}\Omega_{J\setminus\{j_k\}}=(-1)^{(m-1)r}\cdot F\cdot V^\beta_J
\end{equation}
where for any differential form $\eta$ we denote $\eta_J:=\iota_{\frac{\partial}{\partial x_{j_l}}}(\cdots\iota_{\frac{\partial}{\partial x_{j_0}}}(\eta)\cdots)$.
\end{cor}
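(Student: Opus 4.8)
The plan is to deduce this "contracted" identity directly from equation \eqref{eqdFomega}, namely $\Omega\wedge dF=(-1)^{(m-1)r}\cdot F\cdot V^\beta$, by applying the iterated contraction operator $\iota_{\partial/\partial x_{j_l}}\cdots\iota_{\partial/\partial x_{j_0}}$ to both sides. The whole content is thus a bookkeeping exercise: track what the iterated contraction does to a wedge product.

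\medskip

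\textbf{Step 1: reduce to the Leibniz rule for iterated contractions.} For a single vector field $v$, contraction is an antiderivation: $\iota_v(\alpha\wedge\gamma)=\iota_v(\alpha)\wedge\gamma+(-1)^{\deg\alpha}\alpha\wedge\iota_v(\gamma)$. First I would record the analogous formula for the composite $\iota_{\partial/\partial x_J}:=\iota_{\partial/\partial x_{j_l}}\cdots\iota_{\partial/\partial x_{j_0}}$ applied to $\alpha\wedge dF$ where in our case $\alpha=\Omega$ has degree $m$. Since $dF=\sum_i (\partial F/\partial x_i)\,dx_i$ and $\iota_{\partial/\partial x_{j_k}}(dx_i)=\delta_{i,j_k}$, expanding the $l+1$ contractions across the wedge produces exactly two types of terms: the term where every $\iota_{\partial/\partial x_{j_k}}$ hits $\Omega$, giving $\Omega_J\wedge dF$, and the terms where exactly one contraction $\iota_{\partial/\partial x_{j_k}}$ lands on $dF$ (it cannot be hit more than once) while the remaining $l$ contractions hit $\Omega$, giving $\pm F_{j_k}\,\Omega_{J\setminus\{j_k\}}$. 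Collecting the Koszul signs from moving $\iota_{\partial/\partial x_{j_k}}$ past $\Omega$ (degree $m$) and past the earlier contracted factors yields the sign $(-1)^m(-1)^k$ on the $k$-th such term; the indexing $J=\{j_0<\cdots<j_l\}$ makes the sign $(-1)^k$ transparent. This gives
$$
\iota_{\partial/\partial x_J}(\Omega\wedge dF)=\Omega_J\wedge dF+(-1)^m\sum_{k=0}^l(-1)^kF_{j_k}\,\Omega_{J\setminus\{j_k\}}.
$$

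\medskip

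\textbf{Step 2: apply the same operator to the right-hand side.} Here $F$ is a function (degree-$0$ form), so $\iota_{\partial/\partial x_J}(F\cdot V^\beta)=F\cdot \iota_{\partial/\partial x_J}(V^\beta)=F\cdot V^\beta_J$ by definition of the subscript notation $\eta_J$. Combining with Step 1 and the constant factor $(-1)^{(m-1)r}$ from \eqref{eqdFomega} gives \eqref{eqdFOmegacontr} verbatim.

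\medskip

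\textbf{Expected main obstacle.} There is no conceptual difficulty; the only place to be careful is the sign computation in Step 1 — getting the Koszul signs right when the $j_k$ are interleaved in $J$ and when passing a contraction operator past the degree-$m$ form $\Omega$ and past previously-applied contractions. The cleanest way is to first prove the identity for $l=0$ (a single contraction, which is just the antiderivation property and gives the sign $(-1)^m$), then induct on $l$, at each stage applying one more contraction $\iota_{\partial/\partial x_{j_l}}$ to the already-established formula and using $\iota_{\partial/\partial x_{j_l}}(\Omega_{J'})=\Omega_{J'\cup\{j_l\}}$ together with $\iota_{\partial/\partial x_{j_l}}(F_{j_k}\Omega_{J'\setminus\{j_k\}})=F_{j_k}\Omega_{(J'\cup\{j_l\})\setminus\{j_k\}}$; the ordering $j_0<\cdots<j_l$ ensures the new index $j_l$ is largest so no reordering sign is incurred and the inductive sign $(-1)^k$ is preserved. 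This reduces the sign issue to a one-step check.
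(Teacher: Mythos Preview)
Your proposal is correct and follows exactly the paper's own approach: the paper simply states that the identity follows by applying the contractions $\iota_{\partial/\partial x_{j_l}}\cdots\iota_{\partial/\partial x_{j_0}}$ to \eqref{eqdFomega}, and you have spelled out precisely this computation, including the Leibniz rule and the sign bookkeeping. Your inductive handling of the Koszul signs is a clean way to make explicit what the paper leaves to the reader.
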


\begin{proof}
This follows after applying the vector field contractions $\iota_{\frac{\partial}{\partial x_{j_l}}}\cdots\iota_{\frac{\partial}{\partial x_{j_0}}}$ to the identity \eqref{eqdFomega}.    
\end{proof}

\begin{rmk}
In the same context of \cref{corVbeta1}, it follows directly from \eqref{eqVbeta} and \eqref{eqomegaEuler} that
\begin{equation}
\iota_{E_j}(V^\beta)=(-1)^{(m-1)(r-1)+1}k_j\Omega.
\end{equation}
In particular for $J=\{j_0<\cdots<j_l\}\subseteq\{1,\ldots,r\}$
\begin{equation}
\label{eqVbetajotaEuler}
\iota_{E_j}(V^\beta_J)=(-1)^{(m-1)(r-1)+l}k_j\Omega_J.
\end{equation}
\end{rmk}

\section{Toric Carlson-Griffiths theorem}
\label{sec8}
In this section we generalize some results of \cite{carlson1980infinitesimal} to the toric case. As we saw in \S \ref{sec5}, the primitive cohomology of a quasi-smooth intersection inside a projective simplicial toric variety can be described in terms of residue forms. The main result of \cite{carlson1980infinitesimal} we generalize here is about the computation of these residues in Cech cohomology. We begin with the following main lemma.

\begin{dfn}
\label{deflemaop}
Let $X$ be an $n$-dimensional quasi-smooth subvariety of a projective simplicial toric variety $\P_\Sigma$, and $Y:=X\cap \{F=0\}$ be a quasi-smooth hypersurface of $X$ given by some homogeneous polynomial $F\in S(\Sigma)=\C[x_1,\ldots,x_r]$ of ample degree. Consider $\U=\{U_i\}_{i=1}^r$ the Jacobian covering of $X$ given by $U_i=X\cap \{F_i\neq 0\}$, where $F_i:=\frac{\partial F}{\partial x_i}$. For every $l\ge 2$ define 
$$
H^{p,q}_l:C^q(\U,\widetilde\Omega_X^p(lY))\rightarrow C^q(\U, \widetilde\Omega_X^{p-1}((l-1)Y)),
$$
$$
(H^{p,q}_l\omega)_{j_0\cdots j_q}:=\frac{(-1)^{q}}{1-l}\frac{F}{F_{j_0}}\iota_{\frac{\partial}{\partial x_{j_0}}}(\omega_{j_0\cdots j_q})
$$
and 
$$
H_l:=\bigoplus_{p+q=n} H^{p,q}_l:\bigoplus_{p+q=n}C^q(\U, \widetilde\Omega_X^{p}(lY))\rightarrow \bigoplus_{p+q=n}C^q(\U, \widetilde\Omega_X^{p-1}((l-1)Y)).
$$
\end{dfn}

\begin{lemma}[Toric Carlson-Griffiths Lemma]
\label{lemmaCG}
For every $l\ge 2$, letting $D=\delta+(-1)^qd$, then 
$$
DH_l+H_{l+1}D:\bigoplus_{p+q=n}\frac{C^q(\U,\widetilde\Omega_{X}^p(lY))}{C^q(\U,\widetilde\Omega_{X}^p((l-1)Y))}\rightarrow \bigoplus_{p+q=n}\frac{C^q(\U,\widetilde\Omega_{X}^p(lY))}{C^q(\U,\widetilde\Omega_{X}^p((l-1)Y))} $$
is the identity map. Note that $d$ is the usual exterior differential, while $\delta$ is the Cech differential.
\end{lemma}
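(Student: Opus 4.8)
The plan is to verify the identity $DH_l + H_{l+1}D = \mathrm{id}$ directly by computing the action of both operators on a homogeneous element $\omega \in C^q(\U, \widetilde\Omega_X^p(lY))$ with $p+q=n$, working modulo $C^q(\U,\widetilde\Omega_X^p((l-1)Y))$, i.e.\ dropping all terms in which the pole order along $Y$ drops from $l$ to $l-1$. First I would write out $D = \delta + (-1)^q d$ and expand $DH_l\omega = \delta H_l\omega + (-1)^{q-1} d H_l\omega$ (note the Koszul sign, since $H_l\omega$ has degree $q$ in Cech but $p-1$ in forms so the $d$ picks up $(-1)^{q-1}$... I would be careful about precisely which sign convention makes the complex a double complex) and similarly $H_{l+1}D\omega = H_{l+1}\delta\omega + (-1)^q H_{l+1} d\omega$. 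The key local computation is that for a form $\omega_{j_0\cdots j_q}$, the operator $H^{p,q}_l$ is essentially the Carlson--Griffiths homotopy $\frac{(-1)^q}{1-l}\frac{F}{F_{j_0}}\iota_{\partial/\partial x_{j_0}}$, and the crucial algebraic fact is the Leibniz-type relation
$$
d\left(\frac{F}{F_{j_0}}\iota_{\partial/\partial x_{j_0}}\omega_{j_0\cdots j_q}\right) = \text{(terms)} ,
$$
where one uses $d(F/F_{j_0}) = \frac{dF}{F_{j_0}} - \frac{F\, dF_{j_0}}{F_{j_0}^2}$ and the identity $\iota_{\partial/\partial x_{j_0}} d + d\, \iota_{\partial/\partial x_{j_0}} = L_{\partial/\partial x_{j_0}}$ (Cartan's magic formula). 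When we work modulo pole order $l-1$, every term containing a bare $F$ (rather than $F/F_{j_0}$ times something, or $F^{l}$ in a denominator being reduced) is killed, which collapses most of the expansion.

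Concretely, I would proceed as follows. Step 1: fix $\omega$ and record the four pieces $\delta H_l\omega$, $d H_l \omega$, $H_{l+1}\delta\omega$, $H_{l+1} d\omega$ as explicit Cech cochains on an intersection $U_{j_0\cdots j_{q+1}}$ (for the $\delta$-terms, which raise Cech degree) or on $U_{j_0\cdots j_{q-1}}$ — actually both $DH_l$ and $H_{l+1}D$ must land back in degree $q$, so the $\delta H_l$ term pairs with the $H_{l+1}\delta$ term in Cech degree $q$, and the $dH_l$ term pairs with $H_{l+1}d$; I would organize the cancellation that way. Step 2: in the Cech part, $(\delta H_l\omega)_{j_0\cdots j_q}$ involves $\sum_k (-1)^k (H_l\omega)_{j_0\cdots\widehat{j_k}\cdots j_q}$, whose $k=0$ term has the ``wrong'' leading index $j_1$ in $\frac{F}{F_{j_1}}\iota_{\partial/\partial x_{j_1}}$; meanwhile $(H_{l+1}\delta\omega)_{j_0\cdots j_q} = \frac{(-1)^q}{-l}\frac{F}{F_{j_0}}\iota_{\partial/\partial x_{j_0}}(\delta\omega)_{j_0\cdots j_q}$ expands via $(\delta\omega)_{j_0 j_0\cdots} $... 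I would use the standard trick that $\iota_{\partial/\partial x_{j_0}}\omega_{j_1\cdots j_q}$ relates to $\omega_{j_0 j_1\cdots j_q}$ only up to terms killed modulo $(l-1)Y$ because of quasi-smoothness of $Y$ inside $X$ — this is exactly where one invokes that $\{F_i\neq 0\}$ cover $X$ near $Y$ so that $F \in \langle F_{j_0},\dots\rangle$ locally. Step 3: assemble, track signs, and see the identity fall out.

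The main obstacle I expect is twofold: first, bookkeeping the signs in the double complex — the convention $D = \delta + (-1)^q d$ is designed precisely so that $D^2 = 0$ and so that $H_l, H_{l+1}$ fit together, but verifying that the homotopy signs $\frac{(-1)^q}{1-l}$ mesh correctly with the Koszul signs requires care, and a single sign error propagates. Second, and more substantively, the reduction ``modulo $(l-1)Y$'' hides a geometric input: one needs that on $X$ (which is itself only quasi-smooth, hence an orbifold, not smooth) the sheaves $\widetilde\Omega_X^p$ behave well under contraction by coordinate vector fields and under division by $F_i$ on $U_i$; concretely that $\frac{F}{F_{j_0}}\iota_{\partial/\partial x_{j_0}}$ genuinely lowers the pole order and lands in $\widetilde\Omega_X^{p-1}((l-1)Y)$, which uses that $dF$ restricted to $X$ generates the conormal of $Y$ in $X$ away from nowhere, i.e.\ quasi-smoothness of $Y = X\cap\{F=0\}$. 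I would isolate that as the one non-formal ingredient and reduce everything else to the local computation on $\C^r$, where it is the classical Carlson--Griffiths identity; the toric/orbifold subtleties are then just the statement that all the forms and operators are $\mathbb{D}(\Sigma)$-invariant and $G$-invariant locally, which is automatic since $H_l$ is built from coordinate contractions and the polynomial $F$.
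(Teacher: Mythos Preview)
Your proposal is correct and is essentially what the paper does: the paper's own proof consists of a single sentence stating that the classical Carlson--Griffiths argument applies verbatim (with a reference to \cite{carlson1980infinitesimal} and \cite[Theorem~6.2]{movasati2020course}), and your plan to verify the homotopy identity $DH_l + H_{l+1}D = \mathrm{id}$ by direct expansion modulo pole order $l-1$, using the Leibniz rule for $d(F/F_{j_0})$ and Cartan's formula, is exactly that classical computation. Your observation that the only non-formal input is the well-definedness of $\tfrac{F}{F_{j_0}}\iota_{\partial/\partial x_{j_0}}$ on the orbifold sheaves $\widetilde\Omega_X^p$, and that this follows automatically from $\mathbb{D}(\Sigma)$-invariance and quasi-smoothness, is precisely why the paper can simply cite the smooth case.
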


\begin{proof}
The same proof of \cite{carlson1980infinitesimal} applies. See for instance \cite[Theorem 6.2]{movasati2020course}.
\end{proof}

Using the above lemma, we can compute residue forms along ample quasi-smooth hypersurfaces of projective simplicial toric varieties. 

\begin{thm}[Toric Carlson-Griffiths Theorem] 
\label{thmtoricCG}
Let $\P_\Sigma$ be an $m$-dimensional projective simplicial toric variety with anti-canonical class $\beta_0\in\Cl(\P_\Sigma)$. Let $X=\{F=0\}\subseteq\P_\Sigma$ be an ample quasi-smooth hypersurface of ample degree $\beta=\deg(F)\in \Cl(\P_\Sigma)$. Let $q\in \{0,1,\ldots,m-1\}$ and $P\in S(\Sigma)^{\beta(q+1)-\beta_0}$, then 
\begin{equation}
\label{5.9.2018.2}
\res \left(\frac{P\Omega}{F^{q+1}}\right)^{m-1-q,q}=\frac{(-1)^{m-1}}{q!}\left\{\frac{P\Omega_J}{F_J}\right\}_{|J|=q+1}\in H^q(\U,\widetilde{\Omega}_{X}^{m-1-q}).
\end{equation}
Where $\Omega_J:=\iota_{\frac{\partial}{\partial x_{j_q}}}(\cdots\iota_{\frac{\partial}{\partial x_{j_0}}}(\Omega)\cdots)$, $F_J:=F_{j_0}\cdots F_{j_q}$ and $\U=\{U_i\}_{i=1}^{r}$ is the Jacobian covering restricted to $X$, given by $U_i=\{F_i\neq 0\}\cap X$. 
\end{thm}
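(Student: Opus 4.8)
The plan is to follow the strategy of Carlson--Griffiths \cite{carlson1980infinitesimal}, using \cref{lemmaCG} as the key technical device, but carried out with the toric differential forms $\Omega$, $\Omega_J$ of \S\ref{sec7} in place of the projective space versions. Recall the classical Griffiths description of the residue: the class $\res\left(\frac{P\Omega}{F^{q+1}}\right)$ lives in $H^q(X,\widetilde\Omega_X^{m-1-q})$ and is obtained by starting from the meromorphic form $\frac{P\Omega}{F^{q+1}}$ on $\P_\Sigma$, restricting the associated \v{C}ech--de Rham representative to the Jacobian covering $\U=\{U_i\}$ of $X$, and successively reducing the pole order using the homotopy operators $H_l$ until one reaches the level $l=1$, where the form becomes holomorphic along $X$ and descends to a pure \v{C}ech cochain in $C^q(\U,\widetilde\Omega_X^{m-1-q})$.

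First I would set up the \v{C}ech--de Rham bicomplex computing $H^{m-1}(X,\C)$ with the Jacobian covering $\U$, and observe that $\frac{P\Omega}{F^{q+1}}$ (suitably interpreted via the $\widetilde\Omega$-sheaves, using that $\deg(P\Omega/F^{q+1}) = \beta(q+1)-\beta_0 - (q+1)\beta = -\beta_0$ matches the canonical sheaf twist, so the form is a genuine section of $\widetilde\Omega^{m}_{\P_\Sigma}((q+1)X)$ off $X$) represents the de Rham class whose residue we want. Then I would apply \cref{lemmaCG} repeatedly: since $DH_l + H_{l+1}D = \mathrm{id}$ modulo lower pole order, starting from the top-degree global form (a $0$-cochain with pole order $q+1$) and pushing down through $l = q+1, q, \ldots, 2$, one lands, after $q$ applications of the $H_l$'s, on the $(m-1-q,q)$-component, i.e. a \v{C}ech $q$-cochain with values in $\widetilde\Omega_X^{m-1-q}$ with pole order $1$, hence holomorphic on $X$. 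The explicit effect of each $H_l$ on a term $\frac{P\Omega}{F^{q+1}}$ is, by the formula in \cref{deflemaop}, to replace $\Omega$ by $\frac{1}{1-l}\frac{F}{F_{j_0}}\iota_{\partial/\partial x_{j_0}}\Omega$ and decrease the exponent of $F$ by one; iterating over an ordered index set $J=(j_0,\ldots,j_q)$ produces the cochain $\left\{\frac{P\,\Omega_J}{F_J}\right\}$, and collecting the scalar factors $\prod_{l=2}^{q+1}\frac{(-1)^q}{1-l} = \frac{(-1)^{?}}{q!}$ together with the sign bookkeeping from the alternating \v{C}ech structure gives the stated prefactor $\frac{(-1)^{m-1}}{q!}$.

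The two steps I expect to be the real obstacles are: (i) verifying that the naive substitution of $\Omega$ for the projective-space form $\sum(-1)^i x_i\, dx_0\wedge\cdots\widehat{dx_i}\cdots\wedge dx_r$ genuinely goes through, i.e. that the homotopy operators $H_l$ respect the $\widetilde\Omega$-sheaves on the orbifold $X$ and that the descent of the pole-order-$1$ cochain to an honest section of $\widetilde\Omega_X^{m-1-q}$ uses precisely the Euler-field identities of \S\ref{sec7} (in particular \cref{corVbeta1} and the relations \eqref{eqdFOmegacontr}, \eqref{eqVbetajotaEuler}) to kill the ``radial'' components; and (ii) tracking the accumulated signs through the \v{C}ech differential $\delta$, the graded exterior differential $(-1)^q d$, and the iterated contractions, so that the constant comes out to exactly $\frac{(-1)^{m-1}}{q!}$. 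For (ii) I would handle the sign in a single base case and then argue by the recursive structure of $DH_l + H_{l+1}D = \mathrm{id}$; for (i) I would lean on \cref{rmkdifformsE}, which already shows that Euler-contraction-vanishing plus $\mathbb{D}(\Sigma)$-invariance is exactly the descent criterion, so at each stage of the pole reduction the resulting cochain, being built from $\Omega$ by contractions with coordinate vector fields, must be checked to still contract to zero against the $E_j$'s after multiplication by $F/F_J$ — and this is where the Euler relations \eqref{eqEulerrel} do the work. Once the pole order reaches $1$, the comparison $\res\left(\frac{P\Omega}{F^{q+1}}\right)^{m-1-q,q} = \frac{(-1)^{m-1}}{q!}\left\{\frac{P\Omega_J}{F_J}\right\}$ follows by matching this with the Griffiths--Batyrev--Cox description in \cref{thmbatycox}.
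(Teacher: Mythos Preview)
Your proposal follows essentially the same strategy as the paper's proof: iterate the homotopy operators of \cref{lemmaCG} to lower the pole order, arriving at a $q$-cochain with logarithmic pole, then take the Poincar\'e residue. One clarification on the role of \eqref{eqdFOmegacontr}: it is not needed to make the final cochain $\frac{P\Omega_J}{F_J}$ descend to $\widetilde\Omega_X$---that descent is automatic, since $\iota_{E_j}\Omega_J=\pm\iota_{\partial_J}\iota_{E_j}\Omega=0$ for every $j$ (contractions anticommute). Rather, the paper invokes \eqref{eqdFOmegacontr} at \emph{each} inductive step to rewrite the $(m-l,l)$-component of ${}^{(l)}\omega$ explicitly as
\[
\frac{(-1)^{m-1}(q-l)!\,P}{q!\,F^{q-l}}\left(\frac{\Omega_J}{F_J}\wedge\frac{dF}{F}-(-1)^{(m-1)r}\frac{V^\beta_J}{F_J}\right),
\]
the $V^\beta_J$ term being the toric correction absent from the classical Carlson--Griffiths argument; at the last step this displays the form as logarithmic, and the residue extracts the $dF/F$-coefficient.

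Your organization---compute $(-\delta H)^{l}\omega$ directly and apply \eqref{eqdFOmegacontr} only once at the end---also works, and is arguably cleaner. At each $H$-step the contraction $\iota_{\partial/\partial x_{j_0}}$ annihilates every term of the preceding \v Cech differential except the $j_0$-omitting one (because $\iota_{\partial_{j_0}}\Omega_{J'}=0$ whenever $j_0\in J'$), so the recursion is simply $\frac{\Omega_{J'}}{F_{J'}}\mapsto\frac{\Omega_{J}}{F_{J}}$ up to the scalar $\frac{1}{1-l}$; then at the final step \eqref{eqdFOmegacontr} splits the pole-order-$1$ cochain $\sum_k(-1)^k\frac{P\,\Omega_{J\setminus\{j_k\}}}{q!\,F\,F_{J\setminus\{j_k\}}}$ into its $\frac{dF}{F}$-part (yielding the stated residue $\frac{(-1)^{m-1}}{q!}\frac{P\Omega_J}{F_J}$) and a holomorphic $V^\beta_J$-correction. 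So your placement of the Euler identity at the end is legitimate; just be aware that its purpose there is to exhibit the logarithmic structure, not to enforce Euler-vanishing.
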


\begin{proof}
Let $U:=\P_\Sigma\setminus X$. In order to compute the residue of $\omega:=\frac{P\Omega}{F^{q+1}}\in \Hip^m(U,\widetilde{\Omega}^\bullet_U)$ we use the Carlson-Griffiths Lemma (\cref{lemmaCG}) to reduce the pole order as follows: For $l=0,\ldots,q$ define 
 $$
 ^{(l)}\omega:=(1-DH_{q+2-l})\cdots(1-DH_q)(1-DH_{q+1})\left(\frac{P\Omega}{F^{q+1}}\right)\in \Hip^{m}(\mathcal{U},\widetilde{\Omega}^\bullet_U),
 $$
 where $H_l=\bigoplus_{p+q=m}H^{p,q}_l$ is the operator defined in \cref{deflemaop}. We claim 
 $$
 {}^{(l)}\omega^{m-l}=\left\{\frac{(q-l)!(-1)^{m-1}P}{q!\cdot F^{q-l}}\left(\frac{\Omega_J}{F_J}\wedge\frac{dF}{F}-(-1)^{(m-1)r}\frac{V^\beta_J}{F_J}\right)\right\}_{|J|=l+1}
 $$
 as an element of $C^l(\U, \widetilde{\Omega}_{\P_\Sigma}^{m-l}((q-l+1)X))$.
 In fact, for $l=0$, the claim follows from identity \eqref{eqdFOmegacontr}. Assuming the claim for $l\ge 0$, then 
 $$
 H^{m-l,l}_{q-l+1}({}^{(l)}\omega^{m-l})_J=\frac{-(q-l-1)!P}{q!\cdot F^{q-l}}\frac{\Omega_J}{F_J}.
 $$
 In consequence, 
 $$
 {}^{(l+1)}\omega_J^{m-l-1}=-\delta H^{m-l,l}_{q-l+1}({}^{(l)}\omega^{m-l})_J=\frac{(q-l-1)!}{q!}\sum_{k=0}^{l+1}(-1)^{k}\frac{P\Omega_{J\setminus\{j_k\}}}{F^{q-l}F_{J\setminus\{j_k\}}}.
 $$
Using identity \eqref{eqdFOmegacontr} we obtain the claim for $l+1$. In conclusion 
$$
 {}^{(q)}\omega^{m-q}=\left\{\frac{(-1)^{m-1}P}{q!}\left(\frac{\Omega_J}{F_J}\wedge\frac{dF}{F}-(-1)^{(m-1)r}\frac{V^\beta_J}{F_J}\right)\right\}_{|J|=q+1}
 $$
is an element of $C^q(\U, \widetilde{\Omega}_{\P_\Sigma}^{m-q}(\log X))$, thus applying the residue map we get the result.
\end{proof}

\section{Differential forms associated to divisor classes}
\label{sec9}
In this section we generalize the differential form $V^\beta$ introduced in \cref{corVbeta1} to an arbitrary amount of divisor classes. Then we describe some relations between these differential forms. We will see in the next section that these forms arise naturally when we compute residues of logarithmic forms. As in \S \ref{sec7} we will fix a set of linearly independent primitive generators of the rays of $\Sigma$, and we will always assume this set to be
$$
\{\rho_{i_1},\ldots,\rho_{i_m}\}=\{\rho_1,\ldots,\rho_m\}.
$$

\begin{dfn}
\label{defValphais}
Let $\P_\Sigma$ be an $m$-dimensional projective simplicial toric variety. Let $\alpha_1,\ldots,\alpha_p\in\Cl(\P_\Sigma)$ given by $\alpha_\ell=\sum_{i=1}^rb_{\ell, i}\cdot D_i$, where each $D_i$ is the toric divisor corresponding to the ray generated by $\rho_i$. For each $j\in\{m+1,\ldots,r\}$ and each $\ell\in\{1,\ldots,p\}$ define $k_{\ell, j}:=b_{\ell, j}-\sum_{i=1}^mb_{\ell, i}\cdot a_{i,j}$, where $\rho_j=\sum_{i=1}^ma_{i,j}\cdot\rho_i$. Then we define
\begin{equation}
V^{\alpha_1,\ldots,\alpha_p}:=\det(\rho_1|\cdots|\rho_m)\sum_{m+1\le j_1<\cdots<j_p\le r}(-1)^{j_1+\cdots+j_p}\widehat{\iota_{E_{j_1,\ldots,j_p}}}(V)\det\begin{pmatrix}k_{1,j_1} & \cdots & k_{p,j_1} \\
\vdots & \ddots & \vdots \\ 
k_{1,j_p} & \cdots 
& k_{p,j_p}\end{pmatrix}    
\end{equation}
where $\widehat{\iota_{E_{j_1,\ldots,j_p}}}:=\iota_{E_r}\cdots\widehat{\iota_{E_{j_p}}}\cdots\widehat{\iota_{E_{j_{p-1}}}}\cdots\cdots\widehat{\iota_{E_{j_1}}}\cdots\iota_{E_{m+1}}$.
\end{dfn}

A direct consequence of the following proposition is that the form $V^{\alpha_1,\ldots,\alpha_p}$ does not depend on the choice of the fixed linearly independent primitive generators of the rays.

\begin{prop}
In the same context of \cref{defValphais}, let $\beta\in\Cl(\P_\Sigma)$ be an ample class and $F\in S(\Sigma)^\beta$. Then 
\begin{equation}
V^{\alpha_1,\ldots,\alpha_p}\wedge dF=(-1)^{r+m+p}\cdot F\cdot V^{\alpha_1,\ldots,\alpha_p,\beta}.
\end{equation}
In consequence we have for $J=\{j_0<\cdots<j_l\}\subseteq\{1,\ldots,r\}$ that
\begin{equation}
\label{eqValfasJdF}
V^{\alpha_1,\ldots,\alpha_p}_J\wedge dF+(-1)^{m+p}\sum_{k=0}^l(-1)^kF_{j_k}V^{\alpha_1,\ldots,\alpha_p}_{J\setminus\{j_k\}}=(-1)^{r+m+p}\cdot F\cdot V^{\alpha_1,\ldots,\alpha_p,\beta}_J.
\end{equation}
\end{prop}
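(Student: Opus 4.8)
The plan is to prove the first identity
$$
V^{\alpha_1,\ldots,\alpha_p}\wedge dF=(-1)^{r+m+p}\cdot F\cdot V^{\alpha_1,\ldots,\alpha_p,\beta}
$$
by the same mechanism that produced \eqref{eqdFomega} in \cref{corVbeta1}, namely by starting from the trivial relation $V\wedge dF=0$ in $\bigwedge^{r+1}\C^r$ and applying a suitable string of Euler-vector-field contractions. Concretely, for each choice of indices $m+1\le j_1<\cdots<j_p\le r$ I would apply the operator $\widehat{\iota_{E_{j_1,\ldots,j_p}}}=\iota_{E_r}\cdots\widehat{\iota_{E_{j_p}}}\cdots\widehat{\iota_{E_{j_1}}}\cdots\iota_{E_{m+1}}$ to $V\wedge dF=0$. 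Using the graded Leibniz rule for contractions together with the Euler relations $\iota_{E_j}(dF)=k_j F$ from \cref{propEulerrel}, the term $\widehat{\iota_{E_{j_1,\ldots,j_p}}}(V\wedge dF)$ expands as $\widehat{\iota_{E_{j_1,\ldots,j_p}}}(V)\wedge dF$ plus a sum of terms where exactly one of the missing $E_{j_a}$ is instead allowed to hit $dF$, each producing a factor $k_{j_a}F$ times $\iota_{E_r}\cdots\widehat{\iota_{E_{j_p}}}\cdots(\text{all }E\text{'s except }j_1,\ldots,\widehat{j_a},\ldots,j_p)\cdots(V)$; sign bookkeeping from commuting contractions past each other and past $dF$ accounts for the $(-1)^{r+m+p}$. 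Summing over $j_1<\cdots<j_p$ with the determinantal weights $\det(k_{\ell,j_b})_{\ell,b}$ and recognizing the resulting combination as $V^{\alpha_1,\ldots,\alpha_p,\beta}$ (whose defining sum ranges over $(p+1)$-tuples with one more index, with a $(p+1)\times(p+1)$ minor obtained by bordering with the column $(k_{\beta,j})_j=(k_j)_j$) gives the claim. The key combinatorial identity to check is that the Laplace expansion of the $(p+1)\times(p+1)$ determinant along its last column reproduces exactly the weighted sum of $p\times p$ minors that comes out of the contraction computation.

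For the second identity \eqref{eqValfasJdF} I would simply apply the iterated contraction $\iota_{\partial/\partial x_{j_l}}\cdots\iota_{\partial/\partial x_{j_0}}$ to the first identity, exactly as \eqref{eqdFOmegacontr} was derived from \eqref{eqdFomega}. Here the graded Leibniz rule gives $(V^{\alpha_1,\ldots,\alpha_p}\wedge dF)_J=V^{\alpha_1,\ldots,\alpha_p}_J\wedge dF+(\text{terms where some }\iota_{\partial/\partial x_{j_k}}\text{ hits }dF)$; since $\iota_{\partial/\partial x_{j_k}}(dF)=F_{j_k}$, the cross terms assemble into $(-1)^{m+p}\sum_{k=0}^l(-1)^k F_{j_k}V^{\alpha_1,\ldots,\alpha_p}_{J\setminus\{j_k\}}$, where the sign $(-1)^{m+p}$ is the parity of $V^{\alpha_1,\ldots,\alpha_p}$ as a form of degree $r-(m+p)$ so that $\iota_{\partial/\partial x_{j_k}}$ passes the first factor with that sign, and the alternating $(-1)^k$ records how many earlier contractions must be moved past. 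The right-hand side is just $(-1)^{r+m+p}F\cdot V^{\alpha_1,\ldots,\alpha_p,\beta}_J$ by functoriality of the contraction in $J$.

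The main obstacle I anticipate is purely the sign and index bookkeeping: one has to be careful that $\widehat{\iota_{E_{j_1,\ldots,j_p}}}(V)$ is a form of degree $r-(m+p)$ (we contract $V$, of degree $r$, by the $r-m$ Euler fields $E_{m+1},\ldots,E_r$ minus the $p$ omitted ones, i.e. by $r-m-p$ fields — so actually the omitted contractions must be the ones that hit $dF$), and that when an omitted field $E_{j_a}$ is reinstated to contract $dF$, the sign incurred by threading it back into its position in the ordered product $\iota_{E_r}\cdots\iota_{E_{m+1}}$ and then past $dF$ combines with the determinantal expansion sign to give the uniform $(-1)^{r+m+p}$. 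I would handle this by tracking everything in the top exterior algebra $\bigwedge^{\bullet}(\C^r)^*$ with a fixed global ordering $E_{m+1}<\cdots<E_r<\partial/\partial x_{j_0}<\cdots$, reducing all sign computations to counting transpositions; alternatively one can verify the formula on the Picard-rank-one model where $V^{\alpha_1,\ldots,\alpha_p}=0$ for $p\ge1$ is vacuous, then reduce to the already-established case $p=1$ of \cref{corVbeta1} by an induction on $p$ that peels off one divisor class at a time. Both routes are routine once the parity of $V^{\alpha_1,\ldots,\alpha_p}$ is pinned down.
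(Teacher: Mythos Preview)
Your overall strategy is precisely the paper's: start from $V\wedge dF=0$, apply $\widehat{\iota_{E_{j_1,\ldots,j_p}}}$, use the Euler relations $\iota_{E_j}(dF)=k_jF$, then sum over $j_1<\cdots<j_p$ against the determinantal weights to assemble $V^{\alpha_1,\ldots,\alpha_p,\beta}$; the second identity is obtained by applying $\iota_{\partial/\partial x_{j_l}}\cdots\iota_{\partial/\partial x_{j_0}}$ to the first, exactly as you say.

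However, your Leibniz expansion runs in the wrong direction, and if followed literally the computation would not close. The operator $\widehat{\iota_{E_{j_1,\ldots,j_p}}}$ \emph{omits} $\iota_{E_{j_1}},\ldots,\iota_{E_{j_p}}$; these omitted fields are never applied and so cannot ``hit $dF$''. What actually happens is that one of the \emph{applied} contractions $\iota_{E_\ell}$, with $\ell\in\{m{+}1,\ldots,r\}\setminus\{j_1,\ldots,j_p\}$, lands on $dF$ and produces $k_\ell F$, leaving on $V$ the operator with \emph{one more} index omitted, namely $\widehat{\iota_{E_{\ell,j_1,\ldots,j_p}}}(V)$. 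After multiplying by $(-1)^{j_1+\cdots+j_p}\det(k_{a,j_b})$ and summing over $j_1<\cdots<j_p$, these cross terms reorganize as a sum over $(p{+}1)$-tuples $\{j_1,\ldots,j_p,\ell\}$; Laplace expansion of the $(p{+}1)\times(p{+}1)$ minor along the $\beta$-column then yields $F\cdot V^{\alpha_1,\ldots,\alpha_p,\beta}$ with the stated sign. Relatedly, $\widehat{\iota_{E_{j_1,\ldots,j_p}}}(V)$ is an $(m{+}p)$-form (the $r$-form $V$ contracted by $r{-}m{-}p$ fields), not an $(r{-}m{-}p)$-form as you wrote; this is exactly why the Leibniz sign in \eqref{eqValfasJdF} is $(-1)^{m+p}$. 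Your version would produce $(m{+}p{-}1)$-forms on $V$, which cannot sum to the $(m{+}p{+}1)$-form $V^{\alpha_1,\ldots,\alpha_p,\beta}$.
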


\begin{proof}
Noting that $V\wedge dF=0$, if we apply the contraction $\widehat{\iota_{E_{j_1,\ldots,j_p}}}$ to this equality we get
$$
0=\widehat{\iota_{E_{j_1,\ldots,j_p}}}(V)\wedge dF+(-1)^{r+m+1}\left(\sum_{\ell_0<j_1}(-1)^{\ell_0} \widehat{\iota_{E_{\ell_0,j_1,\ldots,j_p}}}(V)\cdot F\cdot k_{\ell_0}+\right.
$$
$$
\left.-\sum_{j_1<\ell_1<j_2}(-1)^{\ell_1} \widehat{\iota_{E_{j_1,\ell_1,j_2,\ldots,j_p}}}(V)\cdot F\cdot k_{\ell_1}+\cdots+(-1)^{p}\sum_{j_p<\ell_p}(-1)^{\ell_p} \widehat{\iota_{E_{j_1,\ldots,j_p,\ell_p}}}(V)\cdot F\cdot k_{\ell_p}\right).
$$
Therefore if follows that
$$
0=(-1)^{r+m+p}\cdot V^{\alpha_1,\ldots,\alpha_p}\wedge dF+
$$
$$
\det(\rho_1|\cdots|\rho_m)\sum_{j_1<\cdots<j_{p}<\ell}(-1)^{j_1+\cdots+j_p+\ell}\cdot F\cdot\widehat{\iota_{E_{j_1,\ldots,j_p,\ell}}}(V)\det\begin{pmatrix}
k_{j_1} & k_{1,j_1} & \cdots & k_{p,j_1} \\
\vdots & \vdots & \ddots & \vdots \\
k_{j_p} & k_{1,j_p} & \cdots & k_{p,j_p} \\
k_\ell & k_{1,\ell} &\cdots & k_{p,\ell}
\end{pmatrix}
$$
as desired.
\end{proof}

The following proposition tells us how do these forms relate via Euler contractions.

\begin{rmk}
In the same context of \cref{defValphais} we have just by definition that for $p>1$
\begin{equation}
\iota_{E_j}(V^{\alpha_1,\ldots,\alpha_p})=(-1)^{r+p}\sum_{i=1}^p(-1)^i k_{i, j}\cdot V^{\alpha_1,\ldots,\widehat{\alpha_i},\ldots,\alpha_p}.
\end{equation}
In consequence for $J=\{j_0<\cdots<j_l\}\subseteq\{1,\ldots,r\}$ we have that
\begin{equation}
\label{eqValfasJE}
\iota_{E_j}(V^{\alpha_1,\ldots,\alpha_p}_J)=(-1)^{r+p+l+1}\sum_{i=1}^p(-1)^i k_{i, j}\cdot V^{\alpha_1,\ldots,\widehat{\alpha_i},\ldots,\alpha_p}_J.
\end{equation}
\end{rmk}

\section{Poincar\'e residue}
\label{sec10}
In this section we recall the Poincar\'e residue map and use it to compute the successive residues of top forms of quasi-smooth complete intersection subvarieties of simplicial projective toric varieties.

Given $X$ a quasi-smooth $n$-dimensional projective orbifold and $Y\subseteq X$ a quasi-smooth ample divisor, there exists a \textit{Poincar\'e residue morphism} of coherent sheaves over $X$
$$
res: \widetilde{\Omega}_X^k(\log Y)\rightarrow i_*\widetilde{\Omega}_Y^{k-1}
$$
which locally takes a logarithmic form $\alpha\wedge \frac{df}{f}+\beta$ to $\alpha|_Y$ (where $Y$ is locally given by $\{f=0\}$ and $i:Y\hookrightarrow X$ is the inclusion). This morphism fits in a short exact sequence called the \textit{Poincar\'e residue sequence}
$$
0\rightarrow \widetilde{\Omega}_X^k\rightarrow\widetilde{\Omega}_X^k(\log Y)\xrightarrow{res} i_*\widetilde{\Omega}_Y^{k-1}\rightarrow 0.
$$
Taking the long exact sequence associated in cohomology one gets for $k=n$ that the coboundary map
\begin{equation}
\label{coboundary}
\delta: H^{n-1}(Y,\widetilde{\Omega}_Y^{n-1})\xrightarrow{\sim} H^{n}(X,\widetilde{\Omega}_X^{n})    
\end{equation}
is an isomorphism. Moreover, since this map corresponds to the cup with the polarization of $X$, it follows that $\delta$ commutes with the trace maps. Thus we can use it to translate the trace of a top form over $Y$ to the trace of a top form over $X$. This is particularly useful in the following context: Let $X\subseteq \P_\Sigma$ be an $n$-dimensional quasi-smooth hypersurface of a projective simplicial toric variety $\P_\Sigma$ of odd dimension $n+1$. Let 
$$
Z=\{g_1=\cdots=g_{\frac{n}{2}+1}=0\}\subseteq\P_\Sigma
$$
be a complete intersection of ample divisors of $\P_\Sigma$ such that $Z\subseteq X$. For each $i=0,\ldots,\frac{n}{2}+1$ define
$$
Z_i:=\{g_{i+1}=\cdots=g_{\frac{n}{2}+1}=0\}\subseteq\P_\Sigma.
$$

\begin{prop}
\label{propPoincres}
Suppose that each $Z_i$ is quasi-smooth. Let $X=\{F=0\}$ for $F\in S(\Sigma)^\beta$ and denote $r=\#\Sigma(1)$. Write $F=g_1\cdot h_1+\cdots+ g_{\frac{n}{2}+1}\cdot h_{\frac{n}{2}+1}$. Then, for every $P\in R(F)^{(\frac{n}{2}+1)\beta-\beta_0}$ and every $\ell=0,\ldots,\frac{n}{2}+1$ we have $\delta^{\ell}(\omega_P|_Z)\in H^{\frac{n}{2}+\ell}(Z_{\ell},\widetilde{\Omega}_{Z_\ell}^{\frac{n}{2}+\ell})$ is 
$$
\delta^{\ell}(\omega_P|_Z)=\left\{\frac{P}{\frac{n}{2}!F_J}\sum_{k=0}^{\ell+1}\sum_{\begin{smallmatrix}|I|=k \\ I\subseteq [2\ell]\end{smallmatrix}}(-1)^{r(k-1)+{k+2\choose 2}+\ell}V_J^{\deg(e_I)}\wedge e^*((dz_0\wedge\cdots\wedge dz_{2\ell-1})_I)\right\}_{|J|=\frac{n}{2}+\ell+1}
$$
where $[2\ell]:=\{0,1,\ldots,2\ell-1\}$, $e:\C^{r}\rightarrow \C^{n+2}$ is given by $$(e_0,\ldots,e_{n+1})=(g_1,h_1,\ldots,g_{\frac{n}{2}+1},h_{\frac{n}{2}+1})$$ and we denote $\deg(e_I)=(\deg(e_{i_1}),\ldots,\deg(e_{i_k}))$.
\end{prop}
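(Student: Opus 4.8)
The plan is to prove the formula by induction on $\ell$, using the Poincaré residue sequence to move from $Z_{\ell}$ up to $Z_{\ell+1}$ one step at a time. The base case $\ell=0$ is exactly the Toric Carlson–Griffiths Theorem (\cref{thmtoricCG}): for $\ell=0$ the set $[2\ell]$ is empty, only the index set $I=\varnothing$ contributes, and the claimed expression collapses to $\left\{\frac{(-1)^{m-1}}{\frac n2!}\frac{P\,\Omega_J}{F_J}\right\}_{|J|=\frac n2+1}$, which is $\res(P\Omega/F^{\frac n2+1})^{\frac n2,\frac n2}=\omega_P|_Z$ after identifying $\Omega=V^{\deg(e_\varnothing)}$ up to the appropriate sign and noting $m=n+1$. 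I would first carefully pin down this sign-matching, since the exponents $r(k-1)+\binom{k+2}{2}+\ell$ are delicate and any off-by-one propagates through the whole induction.

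Next, for the inductive step I would compute $\delta$ applied to the Čech cocycle representing $\delta^\ell(\omega_P|_Z)$ on $Z_\ell$, where $Z_\ell=\{g_{\ell+1}=\cdots=g_{\frac n2+1}=0\}\cap$ (ambient), viewed as a quasi-smooth hypersurface inside $Z_{\ell+1}$ cut out by $g_{\ell+1}$. The key identity here is the toric analogue of the logarithmic-form manipulation: writing the restriction of $F$ as $F=g_1h_1+\cdots+g_{\frac n2+1}h_{\frac n2+1}$, so that modulo $(g_{\ell+2},\ldots,g_{\frac n2+1})$ one has $F\equiv g_{\ell+1}h_{\ell+1}+\sum_{k\le \ell}g_{k+1}h_{k+1}$, the coboundary $\delta$ produces a $dg_{\ell+1}/g_{\ell+1}$ (equivalently, $dh_{\ell+1}$-type) term together with two new columns $\deg(e_{2\ell})=\deg(g_{\ell+1})$ and $\deg(e_{2\ell+1})=\deg(h_{\ell+1})$ entering the list $[2\ell+2]$. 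Concretely, I expect to use the wedge-with-$dF$ identity \eqref{eqValfasJdF} for the forms $V^{\alpha_1,\ldots,\alpha_p}$ (which is precisely the toric replacement of the Carlson–Griffiths pole-reduction step) to rewrite $\delta$ of each summand $V_J^{\deg(e_I)}\wedge e^*((dz_0\wedge\cdots\wedge dz_{2\ell-1})_I)$ as a sum over the enlarged index sets $I'\subseteq[2\ell+2]$, absorbing the new $dg_{\ell+1}\wedge dh_{\ell+1}$ factor. The Euler-contraction relations \eqref{eqValfasJE} and \eqref{eqVbetajotaEuler} will be needed to check that the resulting expression is genuinely a well-defined logarithmic form on $Z_{\ell+1}$ (i.e. descends from $\C^r$), so that the residue map can be applied.

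The combinatorial heart of the argument, and the step I expect to be the main obstacle, is bookkeeping: verifying that summing over which of the two new indices $\{2\ell,2\ell+1\}$ gets incorporated into $I$ — and tracking the Koszul-type signs coming from (i) reordering $dz$'s, (ii) the Čech differential $\delta$ across the $|J|=\frac n2+\ell+1$ indices, and (iii) the sign $(-1)^{r(k-1)}$ attached to each $V^{\deg(e_I)}$ via \eqref{eqValfasJdF} — reproduces exactly the exponent $r(k-1)+\binom{k+2}{2}+(\ell+1)$ in the target formula. I would organize this by isolating the ``new'' contributions ($I'\ni 2\ell$ or $I'\ni 2\ell+1$) from the ``old'' ones ($I'\subseteq[2\ell]$), using that $F\equiv \sum_{k\le\ell}g_{k+1}h_{k+1}+g_{\ell+1}h_{\ell+1}$ modulo the ideal defining $Z_{\ell+1}$ so that the relevant $dF$ expands as $\sum_{k\le\ell}(h_{k+1}dg_{k+1}+g_{k+1}dh_{k+1})+h_{\ell+1}dg_{\ell+1}$ on $Z_{\ell+1}$, and then matching term by term with the inductive hypothesis on $Z_\ell$ (noting that on $Z_\ell$ the classes $g_1,\ldots,g_\ell$ vanish, which kills precisely the terms that should not survive). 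A clean way to package this is to phrase the inductive step as: $\delta\big(\{\tfrac{P}{\frac n2! F_J}\,\omega^{(\ell)}_{J}\}\big)=\{\tfrac{P}{\frac n2! F_J}\,\omega^{(\ell+1)}_J\}$ where $\omega^{(\ell)}=\sum_{k}\sum_{|I|=k,\,I\subseteq[2\ell]}(-1)^{\cdots}V^{\deg(e_I)}\wedge e^*((dz)_I)$, reducing everything to a single identity between these universal ``template'' forms that is independent of $P$ and can be checked once and for all using \eqref{eqValfasJdF} and \eqref{eqValfasJE}.
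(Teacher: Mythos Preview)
Your proposal is correct and follows essentially the same approach as the paper: induction on $\ell$ with the base case given by the Toric Carlson--Griffiths Theorem, and the inductive step carried out by lifting the known Čech cocycle on $Z_\ell$ to a logarithmic form on $Z_{\ell+1}$, verifying it descends via the Euler-contraction identities \eqref{eqEulerrel}, \eqref{eqValfasJE}, and then applying the Čech differential together with \eqref{eqValfasJdF}. The one point the paper makes more concrete than your outline is that it \emph{writes down} the logarithmic lift explicitly as $\eta=\omega^{(\ell)}\wedge \tfrac{dg_{\ell+1}}{g_{\ell+1}}+\widetilde{\omega}^{(\ell)}$, where the correction $\widetilde{\omega}^{(\ell)}$ is obtained from $\omega^{(\ell)}$ by replacing each $V_J^{\deg(e_I)}$ with $V_J^{\deg(e_I),\,\deg(g_{\ell+1})}$ and adjusting the sign; this makes the Euler-contraction check and the subsequent Čech computation routine, whereas in your outline the existence of such a lift is implicit in ``checking the resulting expression descends''.
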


\begin{proof}
Let us proceed inductively on $\ell$. For $\ell=0$ we have
$$
\delta^0(\omega_P|_Z)=\omega_P|_Z=\left\{\frac{P}{\frac{n}{2}!F_J}\Omega_J\right\}_{|J|=\frac{n}{2}+1}=\left\{\frac{P}{\frac{n}{2}!F_J}(-1)^{r+1}V_J^\varnothing\right\}_{|J|=\frac{n}{2}+1}\in H^{\frac{n}{2}}(Z_0,\widetilde{\Omega}_{Z_0}^{\frac{n}{2}}).
$$
For $\ell>0$ assume $\delta^{\ell-1}(\omega_P|_Z)$ satisfies the formula. In order to compute $\delta^\ell(\omega_P|_Z)$ we need to find for each $|J|=\frac{n}{2}+\ell+1$ some $\eta\in \widetilde{\Omega}_{Z_{\ell+1}}^{\frac{n}{2}+\ell+1}(\log Z_\ell)(U_J)$ such that $res(\eta)=\omega^{(\ell)}$ where
$$
\omega^{(\ell)}:=\frac{P}{\frac{n}{2}!F_J}\sum_{k=0}^{\ell+1}\sum_{\begin{smallmatrix}|I|=k \\ I\subseteq [2\ell]\end{smallmatrix}}(-1)^{r(k-1)+{k+2\choose 2}+\ell}V_J^{\deg(e_I)}\wedge e^*((dz_0\wedge\cdots\wedge dz_{2\ell-1})_I).
$$
We claim that 
$$
\eta=\omega^{(\ell)}\wedge \frac{dg_{\ell+1}}{g_{\ell+1}}+\widetilde{\omega}^{(\ell)}
$$
where 
$$
\widetilde{\omega}^{(\ell)}:=\frac{P}{\frac{n}{2}!F_J}\sum_{k=0}^{\ell+1}\sum_{\begin{smallmatrix}|I|=k \\ I\subseteq [2\ell]\end{smallmatrix}}(-1)^{rk+{k+2\choose 2}+\ell}V_J^{\deg(e_I),\deg(g_{\ell+1})}\wedge e^*((dz_0\wedge\cdots\wedge dz_{2\ell-1})_I).
$$
In fact, using the identities \eqref{eqEulerrel} and \eqref{eqValfasJE} it is not hard to see that $\iota_{E_j}(\omega^{(\ell)}\wedge \frac{dg_{\ell+1}}{g_{\ell+1}}+\widetilde{\omega}^{(\ell)})=0$ for all $j=1,\ldots,r$ and so it is the desired $\eta$ by \cref{rmkdifformsE}. Finally taking the Cech differential and using \eqref{eqValfasJdF} we compute $\delta^\ell(\omega_P|Z)$ and check it also satisfies the formula.
\end{proof}

\begin{cor}
\label{coriteratedres}
In the same context of \cref{propPoincres}, let $\rho_{i_1},\ldots,\rho_{i_{n+1}}$ be linearly independent primitive generators of the rays of $\Sigma$, then
$$
\delta^{\frac{n}{2}+1}(\omega_P|_Z)=\frac{sgn(I)(-1)^{\frac{n}{2}}P\cdot \det\begin{pmatrix}
k_{0,\ell}e_0 & \cdots & k_{n+1, \ell}e_{n+1} \\ \frac{\partial e_0}{\partial x_{i_1}} & \cdots & \frac{\partial e_{n+1}}{\partial x_{i_1}} \\ \vdots &  & \vdots \\ \frac{\partial e_0}{\partial x_{i_{n+1}}} & \cdots & \frac{\partial e_{n+1}}{\partial x_{i_{n+1}}} 
\end{pmatrix}}{\frac{n}{2}!k_\ell\det(\rho_I)F\cdot F_{i_1}\cdots F_{i_{n+1}}}\cdot\Omega\in H^{n+1}(\mathcal{U}^I,\widetilde{\Omega}_{\P_\Sigma}^{n+1}),
$$
where $\mathcal{U}^I$ is the open covering associated to $V(F,x_{i_1}\frac{\partial F}{\partial x_{i_1}},\ldots,x_{i_{n+1}}\frac{\partial F}{\partial x_{i_{n+1}}})=\varnothing\subseteq\P_\Sigma$, and $\ell\in\{1,\ldots,r\}\setminus\{i_1,\ldots,i_{n+1}\}$ is any number such that $k_\ell\neq 0$. 
\end{cor}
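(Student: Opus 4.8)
The plan is to start from the formula for $\delta^{\frac{n}{2}}(\omega_P|_Z)$ given by \cref{propPoincres} with $\ell=\frac{n}{2}$, and apply the coboundary map $\delta$ one last time. At this final stage the residues are taken along $Z_{\frac{n}{2}} = \{g_{\frac{n}{2}+1}=0\}\cap X$ inside $Z_{\frac{n}{2}+1}=X$, so the output lives in $H^{n+1}(X,\widetilde\Omega_X^{n+1})$ — but since $\dim X = n+1$ and $X$ is odd-dimensional, the coboundary of the Poincaré residue sequence for $Y=Z_{\frac n2}\subseteq X$ lands in the top cohomology. However, the right-hand side of the claimed identity is a class in $H^{n+1}(\mathcal U^I,\widetilde\Omega_{\P_\Sigma}^{n+1})$ on the \emph{ambient} $\P_\Sigma$, computed with respect to the covering $\mathcal U^I$ coming from $V(F, x_{i_1}F_{i_1},\dots,x_{i_{n+1}}F_{i_{n+1}})=\varnothing$. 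So the first step is to compose $\delta^{\frac n2+1}(\omega_P|_Z)\in H^{n+1}(X,\widetilde\Omega_X^{n+1})$ with the coboundary isomorphism $H^{n+1}(X,\widetilde\Omega_X^{n+1})\xrightarrow{\sim} H^{n+1}(\P_\Sigma, \widetilde\Omega^{n+1}_{\P_\Sigma})$ attached to the Poincaré residue sequence for $X\subseteq\P_\Sigma$ — i.e. the construction in \cref{thmtoricCG} run with $q=n+1$ on $\P_\Sigma$, reading off a Čech cocycle on the Jacobian covering $\{F_i\neq 0\}$ and then passing to the subcovering $\mathcal U^I$. Actually, it is cleaner to do all $\frac n2 + 2$ residues at once: treat $Z = \{g_1=\cdots=g_{\frac n2+1}=F=0\}\subseteq\P_\Sigma$ (using $F = \sum g_i h_i$, so the last equation is dependent, but for the purpose of iterated residues we keep the full flag), and iterate \cref{propPoincres} one more step with $e_{n+2}$-type data coming from $F$ itself.

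The second step is a determinant manipulation. After applying $\delta$ to the $\ell=\frac n2$ formula, the inductive argument of \cref{propPoincres} (identities \eqref{eqEulerrel}, \eqref{eqValfasJE}, \eqref{eqValfasJdF}) produces a sum over subsets $I\subseteq\{0,1,\dots,n+1\}$ of terms $V_J^{\deg(e_I)}\wedge e^*((dz_0\wedge\cdots\wedge dz_{n+1})_I)$, but now $|J| = \frac n2 + \frac n2 + 2 = n+2 > \#\Sigma(1)$ is impossible, so in fact after the last residue $J$ becomes a full set of size $n+1$, and only the top term $|I| = n+2$ survives modulo the lower ones that get absorbed into the $\widetilde\Omega_{Z_\ell}$-part. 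Concretely I expect the only $I$ with $V^{\deg(e_I)}_J\neq 0$ at the end to satisfy $k = |I| = n+2$, whose coefficient is $\det$ of the $(n+2)\times(n+2)$ matrix with rows indexed by $(\text{Euler direction }E_\ell, \partial_{x_{i_1}},\dots,\partial_{x_{i_{n+1}}})$ — this is where the column $\big(k_{0,\ell}e_0,\dots,k_{n+1,\ell}e_{n+1}\big)^{T}$ in the statement comes from: the top row is $\iota_{E_\ell}$ applied to $de_0,\dots,de_{n+1}$, which by the Euler relation \eqref{eqEulerrel} equals $k_{j,\ell}e_j$ (here $e_j=g_i$ or $h_i$ and $k_{j,\ell}$ is the corresponding $k_\ell$ for $\deg(e_j)$). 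The factor $\det(\rho_I)$ and $x_{i_1}\cdots$ cancellations come from \eqref{eqomegaEuler} and \eqref{eqVbeta}, exactly as in the $p=1$ computation of $V^\beta$ in \cref{corVbeta1}; the prefactor $\frac{(-1)^{\frac n2}}{\frac n2!}$ tracks the signs $(-1)^{r(k-1)+\binom{k+2}{2}+\ell}$ summed over the telescoping flag, and $\frac n2!$ is carried unchanged from $\delta^0$.

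The third step is to rewrite the resulting ambient Čech cocycle in the normalized form $\frac{(\cdots)}{k_\ell\det(\rho_I) F\, F_{i_1}\cdots F_{i_{n+1}}}\Omega$, using $\widehat{\iota_{E_\ell}}$ to produce one factor of $\frac{1}{k_\ell}$ (dividing the Euler-contracted top row by $k_\ell$ to turn it into the plain first row of a determinant in which one extracts $\Omega$ via \eqref{eqomegaEuler}), and then observing the result is independent of the choice of $\ell$ with $k_\ell\neq 0$ because different choices differ by elements of the module of Euler relations, i.e. represent the same Čech class (cf. the uniqueness discussion around \cref{deftorichess}). The main obstacle I anticipate is bookkeeping the signs and the pole-order/Čech-degree tracking through the last residue — in particular verifying that the contributions with $|I| < n+2$ really do vanish in $\widetilde\Omega_X^{n+1}$ after dividing the flag down to $Z_{\frac n2+1}=X$ (they are $\widetilde\Omega$-valued rather than $\widetilde\Omega(\log)$-valued, hence killed by $\mathrm{res}$), and that the single surviving term assembles into exactly the stated $(n+2)\times(n+2)$ determinant with the $E_\ell$-row in the top position. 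This is precisely the same mechanism as \cref{thmtoricCG} and \cref{corVbeta1}, just iterated, so no genuinely new idea is needed — only care. Everything else (that $(n+1)\beta'$-type degrees are socle/regular, so the Čech class is well defined) is supplied by \cref{propunimodj0}, \cref{teomactor} and \cref{cortrace}.
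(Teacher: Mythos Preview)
There is a genuine misreading of the setup that derails your plan. In the context of \cref{propPoincres} the flag $Z_i:=\{g_{i+1}=\cdots=g_{\frac{n}{2}+1}=0\}$ lives in $\P_\Sigma$, not in $X$; in particular $Z_{\frac{n}{2}+1}=\P_\Sigma$ (no equations left) and $\dim X=n$, not $n+1$. So \cref{propPoincres} at $\ell=\frac{n}{2}+1$ already hands you a class in $H^{n+1}(\P_\Sigma,\widetilde{\Omega}_{\P_\Sigma}^{n+1})$ with respect to the Jacobian covering $\mathcal U=\{F_i\neq 0\}$, and no further coboundary is needed. Your remark that $|J|=n+2>\#\Sigma(1)$ is also off: $r=m+\rho(\P_\Sigma)\ge n+2$, so $|J|=n+2$ is perfectly allowed.

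More seriously, your second step---``only the term $|I|=n+2$ survives''---does not match the formula: in \cref{propPoincres} the index $k=|I|$ runs only up to $\ell+1=\frac{n}{2}+2$, so $|I|=n+2$ is never reached (for $n>0$), and no single term in that sum is by itself a multiple of $\Omega$. The paper's proof avoids this entirely: by \cref{rmkdifformsE} any top form on $\P_\Sigma$ is automatically $R^J\Omega$ for some $R^J\in S(\Sigma)$, so one writes
\[
\delta^{\frac{n}{2}+1}(\omega_P|_Z)=\Bigl\{\tfrac{P\,R^J\,\Omega}{\frac{n}{2}!\,x_{j_0}F_{j_0}\cdots x_{j_{n+1}}F_{j_{n+1}}}\Bigr\}_{|J|=n+2}
\]
abstractly, then passes to the common refinement $\mathcal V=\{V_0=\{F\neq 0\},\,V_i=\{x_iF_i\neq 0\}\}$ and uses the Euler relation
\[
1=\frac{1}{k_\ell F}\Bigl(x_\ell F_\ell-\sum_{j=1}^{n+1}a_{i_j,\ell}\,x_{i_j}F_{i_j}\Bigr)
\]
as a partition of unity to kill all but the single component $J=(\ell,i_1,\ldots,i_{n+1})$ on $\mathcal U^I$ (the terms $R^{i_j,i_1,\ldots,i_{n+1}}$ vanish by antisymmetry). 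Only then does one evaluate \cref{propPoincres} at this specific $J$, where the sum over $I$ assembles into the claimed $(n+2)\times(n+2)$ determinant. Your step~3 has the right spirit, but the mechanism is this \v Cech partition-of-unity on the refinement, not a contraction $\widehat{\iota_{E_\ell}}$ applied to the form itself.
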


\begin{proof}
By \cref{rmkdifformsE} we can write in the Jacobian covering $\mathcal{U}$ 
$$
\delta^{\frac{n}{2}+1}(\omega_P|_Z)=\left\{\frac{P\cdot R^J\cdot \Omega}{\frac{n}{2}!\cdot x_{j_0}F_{j_0}\cdots x_{j_{n+1}}F_{j_{n+1}}}\right\}_{|J|=n+2}\in H^{n+1}(\mathcal{U},\widetilde{\Omega}_{\P_\Sigma}^{n+1})
$$ 
for some $R^J\in S(\Sigma)^{(\frac{n}{2}+1)\beta}$. In order to compute its image in $H^{n+1}(\mathcal{U}^I,\widetilde{\Omega}_{\P_\Sigma}^{n+1})$ it is enough to pass to a common refinement $\mathcal{V}=\{V_i\}_{i=0}^r$ where $V_0:=\{F\neq 0\}$ and $V_i:=\{x_i\frac{\partial F}{\partial x_i}\neq 0\}$ for each $i=1,\ldots,r$. Now, for computing $\delta^{\frac{n}{2}+1}(\omega_P|_Z)$ in the refinement it is enough to fix any of non-zero Euler relation
$$
1=\frac{1}{k_\ell F}\left(x_{\ell}\frac{\partial F}{\partial x_{\ell}}-\sum_{j=1}^{n+1}a_{i_j,\ell}x_{i_j}\frac{\partial F}{\partial x_{i_j}}\right)
$$
and so
$$
\delta^{\frac{n}{2}+1}(\omega_P|_Z)_{0,i_1,\ldots,i_{n+1}}=\frac{P\Omega}{\frac{n}{2}!k_\ell F\cdot x_{i_1}F_{i_1}\cdots x_{i_{n+1}}F_{i_{n+1}}}\left(R^{\ell,i_1,\ldots,i_{n+1}}-\sum_{j=1}^{n+1}a_{i_j,\ell}R^{i_j,i_1,\ldots,i_{n+1}}\right)
$$
$$
=\frac{P\cdot R^{\ell,i_1,\ldots,i_{n+1}}\cdot \Omega}{\frac{n}{2}!k_\ell F\cdot x_{i_1}F_{i_1}\cdots x_{i_{n+1}}F_{i_{n+1}}}.
$$
The result follows applying \cref{propPoincres} to compute $R^{\ell,i_1,\ldots,i_{n+1}}$.
\end{proof}

\section{Proof of the main theorems}
\label{sec11}
Let us star by  showing the following proposition which generalizes \cite[Theorem 2]{carlson1980infinitesimal}.

\begin{prop}
\label{propcup}
Let $\P_\Sigma$ be a projective simplicial toric variety with anti-canonical class $\beta_0\in\Cl(\P_\Sigma)$. Let $X=\{F=0\}\subseteq\P_\Sigma$ be an ample quasi-smooth hypersurface of degree $\beta=\deg(F)\in \Cl(\P_\Sigma)$ and even dimension $n$. Let $P,Q\in S(\Sigma)^{\beta(\frac{n}{2}+1)-\beta_0}$. For any set 
 $\rho_{i_1},\ldots,\rho_{i_{n+1}}$ of linearly independent primitive generators of the rays of $\Sigma$ we have that
\begin{equation}
\delta(\omega_P\cup\omega_Q)=\frac{(-1)^{\frac{n}{2}}PQ\widehat{x_{i_1,\ldots,i_{n+1}}}\det(\rho_I)}{(\frac{n}{2}!)^2F\cdot F_{i_1}\cdots F_{i_{n+1}}}\Omega\in H^{n+1}(\U^I,\widetilde{\Omega}_{\P_\Sigma}^{n+1}),
\end{equation}
where $\delta: H^n(X,\widetilde{\Omega}_X^n)\xrightarrow{\sim} H^{n+1}(\P_\Sigma,\widetilde{\Omega}_{\P_\Sigma}^{n+1})$ is the map \eqref{coboundary} and $\U^I$ is the covering associated to $V(F,x_{i_1}\frac{\partial F}{\partial x_{i_1}},\ldots,x_{i_{n+1}}\frac{\partial F}{\partial x_{i_{n+1}}})=\varnothing\subseteq\P_\Sigma$.
\end{prop}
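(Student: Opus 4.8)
The plan is to follow the strategy of Carlson--Griffiths \cite[Theorem~2]{carlson1980infinitesimal}, executed with the toric machinery of \S\ref{sec7}--\S\ref{sec9}. Throughout, $m=n+1$. First I would represent both cup factors as \v{C}ech cocycles on the Jacobian covering $\U=\{U_i\}$ of $X$, using the toric Carlson--Griffiths theorem (\cref{thmtoricCG}) with $q=\frac{n}{2}$: this gives $\omega_P=\frac{(-1)^{n}}{\frac{n}{2}!}\{P\,\Omega_J/F_J\}_{|J|=\frac{n}{2}+1}$ and the analogous expression for $\omega_Q$. Applying the alternating \v{C}ech cup product $(\omega_P\cup\omega_Q)_{j_0\cdots j_n}=(\omega_P)_{j_0\cdots j_{n/2}}\wedge(\omega_Q)_{j_{n/2}\cdots j_n}$ then yields a top-degree cocycle on $X$ equal to $\frac{1}{(\frac{n}{2}!)^2}\big\{PQ\,(\Omega_{J_0}\wedge\Omega_{J_1})/(F_{J_0}F_{J_1})\big\}$, with $J_0=\{j_0,\dots,j_{n/2}\}$ and $J_1=\{j_{n/2},\dots,j_n\}$.

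Next I would push this cocycle forward under the coboundary $\delta$ of the Poincar\'e residue sequence $0\to\widetilde\Omega_{\P_\Sigma}^{n+1}\to\widetilde\Omega_{\P_\Sigma}^{n+1}(\log X)\xrightarrow{\res} i_*\widetilde\Omega_X^{n}\to 0$. Concretely, on each chart one must produce a logarithmic lift $\eta_J\in\widetilde\Omega_{\P_\Sigma}^{n+1}(\log X)$ with $\res(\eta_J)$ equal to the above top form, and then take the \v{C}ech differential of $\{\eta_J\}$, which lands (the log poles cancelling) in $H^{n+1}(\P_\Sigma,\widetilde\Omega_{\P_\Sigma}^{n+1})$. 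By analogy with \cref{propPoincres}, the lift should be of the form $\eta_J=\frac{PQ}{(\frac{n}{2}!)^2 F_{J_0}F_{J_1}}\big(c_{J_0,J_1}\wedge\frac{dF}{F}+\widetilde c_{J_0,J_1}\big)$, with $c$ and $\widetilde c$ assembled from $\Omega$, the forms $V^{\beta}$ and $V^{\beta,\beta}$ of \cref{defValphais}, and $dx$-monomials; by \cref{rmkdifformsE} the claim that $\eta_J$ descends to $\P_\Sigma$ reduces to $\iota_{E_j}(\eta_J)=0$ for every $j$, which follows from the Euler relation $\iota_{E_j}(dF)=k_j F$ \eqref{eqEulerrel} and from \eqref{eqValfasJE}. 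The algebraic crux here is a wedge formula rewriting $\Omega_{J_0}\wedge\Omega_{J_1}$ (restricted to $X$) in terms of $V^{\beta}_{J_0\cup J_1}$-type forms, obtained from the explicit expression \eqref{eqOmega} of $\Omega$ together with elementary manipulations of the minors $\det(\rho_I)$.

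Finally, taking the \v{C}ech differential of $\{\eta_J\}$ and invoking \eqref{eqValfasJdF} and \eqref{eqEulerrel} to clear the pole along $X$, the bulk of the terms cancel telescopically, just as in the proof of \cref{thmtoricCG}, leaving an $(n+1)$-cocycle on the Jacobian-type covering $\{F_i\ne 0\}_i$ of $\P_\Sigma$. Refining this, on a common refinement, to the covering $\U^I$ attached to $V(F,x_{i_1}F_{i_1},\dots,x_{i_{n+1}}F_{i_{n+1}})=\varnothing$ exactly as in the proof of \cref{coriteratedres} --- fixing a nonzero Euler relation $1=\frac{1}{k_\ell F}\big(x_\ell F_\ell-\sum_j a_{i_j,\ell}x_{i_j}F_{i_j}\big)$ so that a single summand survives --- and rewriting the surviving $dx$-monomial via $\widehat{dx_I}\leftrightarrow\widehat{x_I}\,\Omega/\det(\rho_I)$ (\cref{rmkdifformsE}) produces the stated formula.

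I expect the main obstacle to be the second step: identifying the correct logarithmic lift $\eta_J$ and proving the wedge identity for $\Omega_{J_0}\wedge\Omega_{J_1}$, and above all controlling the accumulated signs --- the powers of $(-1)$ depending on $r$, $m$, and binomial coefficients such as $\binom{k+2}{2}$ that appear when commuting contractions past wedges, the same bookkeeping already present in \cref{propPoincres} and \cref{coriteratedres}. Once the lift is pinned down, the remaining steps are a routine telescoping and change of covering.
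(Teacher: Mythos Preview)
Your strategy matches the paper's proof step for step: express $\omega_P,\omega_Q$ via \cref{thmtoricCG}, take the \v{C}ech cup, lift through the Poincar\'e residue sequence, apply the \v{C}ech differential, and refine to $\U^I$ using a nontrivial Euler relation. The one place where you overcomplicate matters is the ``algebraic crux'': the wedge identity you need is not a $V^{\beta}_{J_0\cup J_1}$-type expression but the elementary minor identity
\[
\Omega_{j_0\cdots j_{n/2}}\wedge\Omega_{j_{n/2}\cdots j_n}=\det(\rho_J)\,\widehat{x_J}\,\Omega_{j_{n/2}},
\]
valid already on $\P_\Sigma$ (not only after restricting to $X$). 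This collapses the cup product to a cocycle involving a \emph{single} contraction $\Omega_{j_{n/2}}$, so the logarithmic lift is just
\[
\eta_J=\frac{(-1)^{n/2}PQ\det(\rho_J)\widehat{x_J}}{(\tfrac{n}{2}!)^2 F_J\,F_{j_{n/2}}}\left(\Omega_{j_{n/2}}\wedge\frac{dF}{F}-V^\beta_{j_{n/2}}\right),
\]
with $\iota_{E_j}(\eta_J)=0$ following from \eqref{eqVbetajotaEuler} alone. No $V^{\beta,\beta}$ forms, no $\binom{k+2}{2}$ signs, and no telescoping via \eqref{eqValfasJdF} are needed; the \v{C}ech differential of $\{\eta_J\}$ is computed directly from \eqref{eqdFOmegacontr}. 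With this simplification in hand, your anticipated ``main obstacle'' disappears and the rest of your outline goes through exactly as written.
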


\begin{proof}
Without loss of generality we can assume that $I=(1,2,\ldots,n+1)$. Using the toric Carlson-Griffiths theorem \cref{thmtoricCG} and the formula for the cup product in Cech cohomology we get that
$$
\omega_P\cup \omega_Q=\left\{\frac{(-1)^\frac{n}{2}PQ\Omega_{j_0\ldots j_\frac{n}{2}}\wedge \Omega_{j_{\frac{n}{2}}\ldots j_{n}}}{(\frac{n}{2}!)^2F_J\cdot F_{j_\frac{n}{2}}}\right\}_{|J|=n+1}\in H^n(\mathcal{U},\widetilde{\Omega}_X^n),
$$
where $\mathcal{U}$ is the Jacobian covering. Since $\Omega_{j_0\ldots j_\frac{n}{2}}\wedge \Omega_{j_{\frac{n}{2}}\ldots j_{n}}=\det(\rho_J)\widehat{x_{J}}\Omega_{j_\frac{n}{2}}$ it follows that
$$
\omega_P\cup \omega_Q=\left\{\frac{(-1)^\frac{n}{2}PQ\det(\rho_1|\cdots|\rho_{n+1})\widehat{x_{1,\ldots,{n+1}}}\Omega_{j_\frac{n}{2}}}{(\frac{n}{2}!)^2F_J\cdot F_{j_\frac{n}{2}}}\right\}_{|J|=n+1}\in H^n(\mathcal{U},\widetilde{\Omega}_X^n).
$$
By \eqref{eqVbetajotaEuler} it is easy to see that the form 
$$
\eta:=\frac{(-1)^\frac{n}{2}PQ\det(\rho_J)\widehat{x_{J}}\Omega_{j_\frac{n}{2}}}{(\frac{n}{2}!)^2F_J\cdot F_{j_\frac{n}{2}}}\wedge\frac{dF}{F}+\frac{(-1)^{\frac{n}{2}+1}PQ\det(\rho_J)\widehat{x_{J}}V^\beta_{j_\frac{n}{2}}}{(\frac{n}{2}!)^2F_J\cdot F_{j_\frac{n}{2}}}
$$
is such that $\res(\eta)=(\omega_P\cup\omega_Q)_J$. Using this we can compute 
$$
\delta(\omega_P\cup\omega_Q)=\left\{\frac{(-1)^\frac{n}{2}\widehat{x_J}PQ\Omega}{(\frac{n}{2}!)^2F_J\cdot F}\sum_{i=0}^{n+1}(-1)^i\det(\rho_{J\setminus\{j_i\}})x_{j_i}F_{{j_i}}\right\}_{|J|=n+2}\in H^{n+1}(\mathcal{U},\widetilde{\Omega}_{\P_\Sigma}^{n+1}).
$$
In order to write this element relative to the covering $\mathcal{U}^I$ we can use any non-trivial Euler relation
$$
1=\frac{1}{k_\ell F}\left(x_{\ell}\frac{\partial F}{\partial x_{\ell}}-\sum_{i=1}^{n+1}a_{ i,\ell}x_{i}\frac{\partial F}{\partial x_{i}}\right).
$$
Using this relation and the fact that $\det(\rho_{\{1,\ldots,n+1\}\setminus\{i\}\cup\{\ell\}})=(-1)^{i+1}a_{i,\ell}\det(\rho_1|\cdots|\rho_{n+1})$ we conclude that
$$
\delta(\omega_P\cup\omega_Q)=\frac{(-1)^\frac{n}{2}PQ\Omega}{(\frac{n}{2}!)^2F\cdot k_\ell F}\left(\frac{\widehat{x_{1,\ldots,{n+1}}}}{F_{1,\ldots,{n+1}}}\det(\rho_1|\cdots|\rho_{n+1})\left(x_{\ell}F_{\ell}-\sum_{i=1}^{n+1}a_{i,\ell}x_{i}F_{i}\right)\right)
$$
$$
=\frac{(-1)^{\frac{n}{2}}PQ\widehat{x_{1,\ldots,{n+1}}}\det(\rho_1|\cdots|\rho_{n+1})}{(\frac{n}{2}!)^2F\cdot F_{1}\cdots F_{{n+1}}}\Omega\in H^{n+1}(\U^I,\widetilde{\Omega}_{\P_\Sigma}^{n+1}).
$$
\end{proof}

\noindent\textbf{Proof of \cref{thm4}} Since the coboundary map \eqref{coboundary} preserves the trace, it follows by \cref{propcup} and \cref{cortrace} that
\begin{equation}
\label{eqcup}
\text{Tr}(\omega_P\cup\omega_Q)=\text{Tr}(\delta(\omega_P\cup\omega_Q))=(-1)^{\frac{n}{2}+1}\cdot c'\cdot (n+1)!\text{Vol}(\Delta)
\end{equation}
where $\Delta$ is the convex polyhedron associated to $\beta$ and $c'\in\C$ is the unique number such that
\begin{equation}
\label{eqcupfin}
\frac{(-1)^\frac{n}{2}PQx_1\cdots x_r\det(\rho_I)}{(\frac{n}{2}!)^2}\equiv c'\cdot \Hess^I_\Sigma(F)\hspace{5mm}\text{(mod }J_0(F))    
\end{equation}
and the result follows. \hfill $\blacksquare$

\begin{cor}
\label{corhess}
Let $\P_\Sigma$ be a projective simplicial toric variety with anti-canonical class $\beta_0\in\Cl(\P_\Sigma)$. Let $X=\{F=0\}\subseteq\P_\Sigma$ be an ample quasi-smooth hypersurface of degree $\beta=\deg(F)\in \Cl(\P_\Sigma)$ and even dimension $n$. If $H^{\frac{n}{2},\frac{n}{2}}(X)_\prim\neq 0$, then $\Hess_\Sigma(F)\in R_0(F)^{(n+2)\beta-\beta_0}$ is well-defined.
\end{cor}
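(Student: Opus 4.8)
The plan is to bootstrap from the computation already carried out in the proof of \cref{thm4}. We are in the setting of \cref{deftorichess0}, so $X$ is non-degenerated and $\Hess^I_\Sigma(F)\in S(\Sigma)^{(n+2)\beta-\beta_0}$ is defined for every set $I$ of linearly independent primitive ray generators; here $m:=\dim\P_\Sigma=n+1$ since $X$ is an $n$-dimensional hypersurface, so $(n+2)\beta-\beta_0=(m+1)\beta-\beta_0$ is a socle degree of $J_0(F)$ by \cref{propunimodj0}. Thus $R_0(F)^{(n+2)\beta-\beta_0}$ is one-dimensional, and the content of the statement is that the element $\Hess_\Sigma(F):=\Hess^I_\Sigma(F)/\det(\rho_I)$ of this line does not depend on $I$; that $\Hess^I_\Sigma(F)$ is a nonzero element of it, so that the scalar $c'_I$ in \eqref{eqcupfin} is well defined, follows from \cref{thmcox} and ampleness of $\beta$. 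Now fix $I$ together with $P,Q\in S(\Sigma)^{(\frac{n}{2}+1)\beta-\beta_0}$. Running the argument of the proof of \cref{thm4} with this particular $I$ — legitimate since \cref{propcup} and \cref{cortrace} hold for an arbitrary such $I$ — yields, exactly as in \eqref{eqcup} and \eqref{eqcupfin},
$$
\text{Tr}(\omega_P\cup\omega_Q)=(-1)^{\frac{n}{2}+1}c'_I\,(n+1)!\,\text{Vol}(\Delta),\qquad \frac{(-1)^{\frac{n}{2}}PQ\,x_1\cdots x_r\det(\rho_I)}{(\frac{n}{2}!)^2}\equiv c'_I\,\Hess^I_\Sigma(F)\pmod{J_0(F)}.
$$

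Since the left-hand side of the first identity is intrinsic to $X$ and does not involve $I$, while $\text{Vol}(\Delta)>0$ because $\beta$ is ample, we get $c'_I=c'_{I'}=:c'$ for any two choices $I,I'$. Multiplying the congruence by the nonzero scalar $(-1)^{\frac{n}{2}}(\frac{n}{2}!)^2\det(\rho_I)^{-1}$ inside the line $R_0(F)^{(n+2)\beta-\beta_0}$ rewrites it as $PQ\,x_1\cdots x_r\equiv (-1)^{\frac{n}{2}}(\frac{n}{2}!)^2\,c'\,\Hess^I_\Sigma(F)/\det(\rho_I)\pmod{J_0(F)}$, and the analogous congruence holds with $I'$ in place of $I$ and the \emph{same} constant $c'$. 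Subtracting, the scalar $(-1)^{\frac{n}{2}}(\frac{n}{2}!)^2c'$ annihilates $\Hess^I_\Sigma(F)/\det(\rho_I)-\Hess^{I'}_\Sigma(F)/\det(\rho_{I'})$ in $R_0(F)^{(n+2)\beta-\beta_0}$. Hence everything reduces to producing one pair $P,Q$ with $c'\neq 0$, i.e.\ with $\text{Tr}(\omega_P\cup\omega_Q)\neq 0$; this is where the hypothesis $H^{\frac{n}{2},\frac{n}{2}}(X)_\prim\neq 0$ enters.

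To find such $P,Q$, I would show that the cup-product pairing is non-degenerate on $H^{\frac{n}{2},\frac{n}{2}}(X)_\prim$. Indeed, $X$ is a complete rationally smooth $V$-manifold, so Poincaré duality holds on $H^n(X,\Q)$ (\S\ref{sec4}); by the projection formula the subspace $H^n(X,\Q)_\prim=\ker(i_!)$ is orthogonal to $i^*H^n(\P_\Sigma,\Q)$ for the intersection pairing, and since $H^n(X,\Q)=H^n(X,\Q)_\prim\oplus i^*H^n(\P_\Sigma,\Q)$ by \S\ref{sec5}, the pairing is non-degenerate on $H^n(X,\Q)_\prim$; because it is compatible with the Hodge decomposition and $n$ is even, it restricts to a non-degenerate pairing on the middle piece $H^{\frac{n}{2},\frac{n}{2}}(X)_\prim$. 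As this space is nonzero, there are classes in it pairing non-trivially; by \cref{thmbatycox} these classes are $\omega_P,\omega_Q$ for suitable $P,Q\in S(\Sigma)^{(\frac{n}{2}+1)\beta-\beta_0}$, and then $\text{Tr}(\omega_P\cup\omega_Q)=\frac{1}{(2\pi i)^n}\int_X\omega_P\cup\omega_Q\neq 0$, as needed. The step I expect to require the most care is this one: checking that the Lefschetz-theoretic non-degeneracy of the intersection form on the primitive middle cohomology genuinely applies to the orbifold $X$ — combining Hard Lefschetz, the projection formula, and Hodge-type orthogonality as in the smooth projective case — whereas the bookkeeping with the constants $c'_I$ and the factors $\det(\rho_I)$ is routine once the reduction above is in place.
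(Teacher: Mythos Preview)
Your proposal is correct and follows essentially the same approach as the paper: both arguments run the proof of \cref{thm4} for an arbitrary choice of $I$, use the intrinsic (i.e.\ $I$-independent) trace $\text{Tr}(\omega_P\cup\omega_Q)$ to force the constants $c'_I$ to coincide, and invoke the non-degeneracy of the cup pairing on $H^{\frac{n}{2},\frac{n}{2}}(X)_\prim$ to secure a pair $P,Q$ with $c'\neq 0$, whence $\Hess^I_\Sigma(F)/\det(\rho_I)$ is independent of $I$ in $R_0(F)^{(n+2)\beta-\beta_0}$. The paper states the unimodularity of the cup product as a one-liner and solves directly for $\Hess^I_\Sigma(F)/\det(\rho_I)$ in terms of $P,Q$ and the trace, whereas you subtract congruences and spell out the Lefschetz/orthogonality argument for non-degeneracy; these are cosmetic differences.
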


\begin{proof}
Take any $\omega_P\in H^{\frac{n}{2},\frac{n}{2}}(X)_\prim\setminus\{0\}$, then since the cup product is unimodular, there exists some $\omega_Q\in H^{\frac{n}{2},\frac{n}{2}}(X)_\prim$ such that $\omega_P\cup\omega_Q\neq 0$. Following the proof of \cref{thm4} we can compute the its trace, which is non-zero, with respect to any set $\rho_{i_1}, \ldots,\rho_{i_{n+1}}$ of linearly independent primitive generators of rays of $\Sigma$. Since this value does not depend on this choice, it follows from \eqref{eqcup} and \eqref{eqcupfin} that 
$$
\frac{\Hess^I_\Sigma(F)}{\det(\rho_I)}=\frac{-PQx_1\cdots x_r(n+1)!\text{Vol}(\Delta)}{\text{Tr}(\omega_P\cup\omega_Q)\cdot (\frac{n}{2}!)^2}\in R_0(F)^{(n+2)\beta-\beta_0}
$$
does not depend of $I$. 
\end{proof}

Now that we have all the necessary ingredients to prove the main theorems.

\bigskip

\noindent\textbf{Proof of \cref{thm2}} Let us consider first the case where $X=\{F=0\}$ is an hypersurface. Since the trace of $\omega_P|_Z$ is constant along flat deformations of the pair $(Z\subseteq X)$, we can assume that we are in the same context of \cref{propPoincres}. Now, since the coboundary map preserves the trace we have
$$
\text{Tr}(\omega_P|_Z)=\text{Tr}(\delta^{\frac{n}{2}+1}(\omega_P|_Z)).
$$
The result follows once we compare \cref{coriteratedres} with \cref{cortrace} and we use the fact that by the Euler relations $k_{j,\ell}e_j=x_{\ell}\frac{\partial e_j}{\partial x_{\ell}}-\sum_{k=1}^{n+1}a_{i_k,\ell}x_{i_k}\frac{\partial e_j}{\partial x_{i_k}}$ we have
$$
\det\begin{pmatrix}
k_{0,\ell}e_0 & \cdots & k_{n+1, \ell}e_{n+1} \\ \frac{\partial e_0}{\partial x_{i_1}} & \cdots & \frac{\partial e_{n+1}}{\partial x_{i_1}} \\ \vdots &  & \vdots \\ \frac{\partial e_0}{\partial x_{i_{n+1}}} & \cdots & \frac{\partial e_{n+1}}{\partial x_{i_{n+1}}} 
\end{pmatrix}=x_{\ell}\det\begin{pmatrix}
\frac{\partial e_0}{\partial x_{\ell}} & \cdots & \frac{\partial e_{n+1}}{\partial x_{\ell}} \\ \frac{\partial e_0}{\partial x_{i_1}} & \cdots & \frac{\partial e_{n+1}}{\partial x_{i_1}} \\ \vdots &  & \vdots \\ \frac{\partial e_0}{\partial x_{i_{n+1}}} & \cdots & \frac{\partial e_{n+1}}{\partial x_{i_{n+1}}} 
\end{pmatrix}.
$$
For the case where $X$ is a non-degenerated intersection, we use the Cayley trick \cref{rmkCayleytrick} and the fact that $\text{Tr}(\omega_P|_Z)=\text{Tr}(\widetilde\omega_P|_{\pi^{-1}(Z)})$ (which follows from the factorization \eqref{eqCaytr}) where $\widetilde\omega_P\in H^{\frac{m+s-2}{2},\frac{m+s-2}{2}}(Y)_\prim$ is the residue form induced by $P\in S(\widetilde{\Sigma})$.
\hfill $\blacksquare$

\bigskip

\noindent\textbf{Proof of \cref{mainthm}} By \cref{thm2} and \cref{thm4} it follows that 
$$
\text{Tr}(\widetilde\omega_P|_{\pi^{-1}(Z)})=\text{Tr}(\widetilde\omega_P\cup \widetilde\omega_{P_Z})
$$
for all $P\in S(\widetilde\Sigma)^{(\frac{n}{2}+1)\beta-\beta_0}$. By Poincaré duality this implies that $[\pi^{-1}(Z)]_\prim=\widetilde\omega_{P_Z}$, and so $[Z]_\prim=\omega_{P_Z}$. In consequence $[Z]=[\alpha]|_X+\omega_{P_Z}$ for some $\alpha\in \CH^\frac{n}{2}(\P_\Sigma)_\Q$. In order to compute it we see that for any $\nu\in H^s(\P_\Sigma,\C)$
$$
\text{Tr}([Z]\cup\nu)=\text{Tr}(\nu|_Z)=\text{Tr}([\alpha]|_X\cup\nu|_X)=\text{Tr}([X]\cup[\alpha]\cup\nu)
$$
and so $[Z]=[X]\cup[\alpha]$, which is equivalent to say that $\alpha_1\cdots\alpha_\frac{m+s}{2}=\beta_1\cdots\beta_s\cdot\alpha$. This relation characterizes $\alpha$ by the toric Hard Lefschetz theorem.
\hfill $\blacksquare$







\bibliographystyle{alpha}

\bibliography{ref}



\end{document}